\newtheorem{theorem}{Theorem}
\newtheorem{thm}{Theorem}[section]
\newtheorem{prop}[thm]{Proposition}
\newtheorem{lem}[thm]{Lemma}
\newtheorem{cor}[thm]{Corollary}
\newtheorem{obs}[thm]{Observation}
\newenvironment{proofof}[1]
      {\medskip\noindent{\em Proof of #1:}\hspace{1mm}}
      {\hfill$\Box$\medskip}
\newenvironment{packed_enum}{

\begin{enumerate}
  \setlength{\itemsep}{1pt}
  \setlength{\parskip}{0pt}
  \setlength{\parsep}{0pt}
}{\end{enumerate}}
\newenvironment{packed_item}{

\begin{itemize}
  \setlength{\itemsep}{1pt}
  \setlength{\parskip}{0pt}
  \setlength{\parsep}{0pt}
}{\end{itemize}}
\def\X{{\mbox{cr}}}
\def\E{{\mathbb E}}
\begin{document}

\title{
     On topological graphs with at most four crossings per edge%\footnote{This paper is identical to the paper~\cite{CGTA} except for the application of the results here to Albertson conjecture that were observed after the original paper was already submitted. Since this application follows almost verbatim the arguments in~\cite{BT10} it does not deserve a paper of its own, however, for completeness and future reference it is included in this arXived version of the paper.}  
   }

\author{
Eyal Ackerman\thanks{
Department of Mathematics, Physics, and Computer Science, 
University of Haifa at Oranim, Tivon 36006, Israel.
{\tt ackerman@sci.haifa.ac.il}.}}

%\date{}
\maketitle

\begin{abstract}
We show that if a graph $G$ with $n \geq 3$ vertices can be drawn in the plane
such that each of its edges is involved in at most four crossings,
then $G$ has at most $6n-12$ edges.
This settles a conjecture of Pach, Radoi\v{c}i\'{c}, Tardos, and T\'oth,
and yields a better bound for the famous \emph{Crossing Lemma}:
The \emph{crossing number}, $\X(G)$, of a (not too sparse) graph $G$ with $n$ vertices and $m$ edges
is at least $c\frac{m^3}{n^2}$, where $c > 1/29$.
This bound is known to be tight, apart from the constant $c$ for which
the previous best lower bound was $1/31.1$.

As another corollary we obtain some progress on the \emph{Albertson conjecture}:
Albertson conjectured that if the chromatic number of a graph $G$ is $r$, then $\X(G) \geq \X(K_r)$.
This was verified by Albertson, Cranston, and Fox for $r \leq 12$, and for $r \leq 16$ by Bar\'at and T\'oth.
Our results imply that Albertson conjecture holds for $r \leq 18$.  
\end{abstract}

\section{Introduction}
\label{sec:intro}

Throughout this paper we consider graphs with no loops or parallel edges, unless stated otherwise.
A \emph{topological graph} is a graph drawn in the plane with its vertices
as points and its edges as Jordan arcs that connect the corresponding points
and do not contain any other vertex as an interior point.
Every pair of edges in a topological graph has a finite number of intersection points,
each of which is either a vertex that is common to both edges,
or a crossing point at which one edge passes from one side of the other edge to its other side.
Note that an edge may not cross itself, since edges are drawn as Jordan arcs.
If every pair of edges intersect at most once, then the topological graph is \emph{simple}. 

A \emph{crossing} in a topological graph consists of a pair of crossing edges and a point in which they cross.
We will be interested in the maximum number of crossings an edge is involved in,
and in the total number of crossings in a graph.
Therefore, we may assume henceforth that the topological graphs that we consider
do not contain three edges crossing at a single point.
Indeed, if more than two edges cross at a point $p$, then we can redraw
these edges at a small neighborhood of $p$ such that no three of them cross at a point
and without changing the number of crossings in which each of these edges is involved.
% We may further assume that the topological graphs that we consider
% do not contain self-crossing edges, for such loops can be deleted (again, without increasing
% the number of crossings per edge).
% NO NEED SINCE WE DEFINE EDGES AS JORDAN ARCS

Therefore, the \emph{crossing number} of a topological graph $D$, $\X(D)$, 
can be defined as the total number of crossing points in $D$.
The crossing number of an abstract graph $G$, $\X(G)$, is the minimum value of $\X(D)$ 
taken over all drawings $D$ of $G$ as a topological graph.
It is not hard to see that if $D$ is a drawing of $G$ as a topological graph
such that $\X(G) = \X(D)$, then $D$ is a simple topological graph.
Indeed, if $D$ has two edges $e_1$ and $e_2$ that intersect at two points $p_1$ and $p_2$,
then at least one of these intersection points is a crossing point, for otherwise
$e_1$ and $e_2$ are parallel edges.
By swapping the segments of $e_1$ and $e_2$ between $p_1$ and $p_2$
and modifying the drawing of $e_1$ in a small neighborhood of the crossing points
among $p_1$ and $p_2$, we obtain a drawing of $G$ as a topological graph with fewer crossings.

The following result was proved by Ajtai, Chv\'atal, Newborn, Szemer\'edi \cite{ACNS82} and, 
independently, Leighton \cite{L83}.

\begin{theorem}[\cite{ACNS82,L83}]
There is an absolute constant $c>0$ such that
for every graph $G$ with $n$ vertices and $m \geq 4n$ edges we have $\X(G) \geq c\frac{m^3}{n^2}$.
\end{theorem}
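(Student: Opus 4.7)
The plan is to bootstrap from the much weaker (but easy) inequality coming directly from Euler's formula, and then amplify it via a random sampling argument. This is the standard two-step approach of Ajtai--Chv\'atal--Newborn--Szemer\'edi and Leighton.

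First I would establish the linear bound: any simple topological graph $D$ on $n \geq 3$ vertices and $m$ edges satisfies $\X(D) \geq m - (3n-6)$. The reason is that removing one edge from each crossing pair yields a drawing with no crossings, hence a planar graph, which has at most $3n-6$ edges by Euler's formula. Taking the minimum over all drawings gives $\X(G) \geq m - 3n + 6$ for every abstract graph $G$ with $n \geq 3$ vertices and $m$ edges. This is nontrivial only when $m > 3n-6$, and by itself is far too weak to imply the cubic lower bound.

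Next comes the amplification step, which is where the real content lies. Fix an optimal drawing $D$ of $G$ and a parameter $p \in (0,1]$ to be chosen later. Form a random induced subdrawing $D_S$ by selecting each vertex of $G$ independently with probability $p$ and keeping the induced edges and crossings. Let $n_S$, $m_S$, $\X_S$ denote the number of vertices, edges, and crossings of $D_S$. Applying the linear bound to $D_S$ gives
\begin{equation*}
\X_S \;\geq\; m_S - 3 n_S,
\end{equation*}
and taking expectations (with $\E[n_S]=pn$, $\E[m_S]=p^2 m$, $\E[\X_S] \leq p^4 \X(G)$ since each crossing survives iff its four endpoints do) yields
\begin{equation*}
p^4 \X(G) \;\geq\; p^2 m - 3 p n.
\end{equation*}
Rearranging gives $\X(G) \geq m/p^2 - 3n/p^3$. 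The final step is to optimize $p$; the natural choice is $p = c' n/m$ for a suitable constant $c'$, which is legitimate precisely when $m$ is at least a constant multiple of $n$ (the hypothesis $m \geq 4n$ ensures $p \leq 1$ with the standard choice). Plugging in produces the desired bound $\X(G) \geq c \cdot m^3/n^2$ for an explicit absolute constant $c > 0$.

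The main conceptual obstacle is the amplification idea itself: the linear bound is essentially free, but one has to notice that sampling vertices makes crossings disappear quadratically faster than edges (because crossings depend on four endpoints while edges depend on two), and exploit this asymmetry via linearity of expectation. The remaining routine calculation---optimizing $p$ and verifying $p \leq 1$ under the hypothesis $m \geq 4n$---presents no real difficulty. I would not expect to obtain a sharp constant from this argument; indeed improving $c$ is precisely the subject of the present paper, which proceeds by replacing the $3n-6$ bound with stronger edge bounds for graphs drawable with few crossings per edge.
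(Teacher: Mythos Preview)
Your argument is correct; this is exactly the standard Chazelle--Sharir--Welzl probabilistic proof of the Crossing Lemma (yielding $c=1/64$ with the choice $p=4n/m$). Note, however, that the paper does not actually prove this theorem: it is stated with citations to \cite{ACNS82,L83} and no proof is given in the text. The paper does invoke precisely the same random-sampling amplification later, in the proof of Theorem~\ref{thm:crossing-lemma}, where the weak linear bound $m-3(n-2)$ is replaced by the stronger bound of Theorem~\ref{thm:linear-new} to obtain the improved constant $1/29$. So your proposal matches both the classical proof and the methodology the paper itself employs for its refinement.
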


This celebrated result is known as the \emph{Crossing Lemma} and has numerous 
applications in combinatorial and computational geometry, number theory, and
other fields of mathematics.

The Crossing Lemma is tight, apart from the multiplicative constant $c$.
This constant was originally very small, and later was shown to be at least $1/64 \approx 0.0156$,
by a very elegant probabilistic argument due to Chazelle, Sharir, and Welzl~\cite{AZ04}.
Pach and T\'oth~\cite{PT97} proved that $0.0296 \approx 1/33.75 \leq c \leq 0.09$ (the lower bound applies for $m \geq 7.5n$).
Their lower bound was later improved by Pach, Radoi\v{c}i\'{c}, Tardos, and T\'oth~\cite{PR+06}
to $c \geq 1024/31827 \approx 1/31.1 \approx 0.0321$ (when $m \geq \frac{103}{16}n$).
Both improved lower bounds for $c$ were obtained using the same approach,
namely, finding many crossings in sparse graphs.
To this end, it was shown that topological graphs with few crossings per edge are sparse.

Denote by $e_k(n)$ the maximum number of edges in a topological graph with $n>2$ vertices
in which every edge is involved in at most $k$ crossings.
Let $e^*_k(n)$ denote the same quantity for \emph{simple} topological graphs.
It follows from Euler's Polyhedral Formula that $e_0(n) \leq 3n-6$.
Pach and T\'oth~\cite{PT97} showed that $e^*_k(n) \leq 4.108\sqrt{k}n$, for $k > 0$, and also gave
the following better bounds for $k \leq 4$.

\begin{theorem}[\cite{PT97}]\label{thm:PT97}
$e^*_k(n) \leq (k+3)(n-2)$ for $0 \leq k \leq 4$.
Moreover, these bounds are tight when $0 \leq k \leq 2$ for infinitely many values of $n$.
\end{theorem}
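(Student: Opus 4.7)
The case $k=0$ is Euler's formula: a simple planar graph on $n \geq 3$ vertices has at most $3(n-2)$ edges. For $1 \leq k \leq 4$, let $G$ be a simple topological graph with $n$ vertices and $m$ edges in which every edge is crossed at most $k$ times. I would form the \emph{planarization} $G^+$ by replacing each crossing with a dummy vertex of degree $4$, obtaining a simple plane graph with $n + \X(G)$ vertices and $m + 2\X(G)$ edges (simplicity of $G^+$ uses that $G$ is simple and has no parallel edges). A bare Euler inequality gives only $\X(G) \geq m - 3(n-2)$, which combined with the trivial $\X(G) \leq km/2$ recovers the theorem only for $k = 0$.

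The sharpening would exploit simplicity of $G$ to forbid certain small faces of $G^+$. The prototypical observation is that a triangular face of $G^+$ whose three vertices are all dummy vertices corresponds to three pairwise-crossing edges of $G$, each then crossed at least twice; such a face is therefore impossible for $k \leq 1$ and severely restricted for larger $k$. Analogous but finer restrictions apply to triangular faces with two dummy vertices, to triangular faces sharing an edge, and to clusters of triangular faces meeting at a dummy vertex. I would then set up a discharging argument on $G^+$: assign each face $F$ initial charge $|F| - 4$ and each vertex $v$ initial charge $\deg_{G^+}(v) - 4$, so that the total charge is $-8$ by Euler. The rules move charge from large faces and from original vertices of $G$ (typically of high $G^+$-degree) toward triangular faces and dummy vertices in proportions dictated by the forbidden configurations. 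Rewriting the resulting global inequality in terms of $n$, $m$, and $\X(G)$, and combining it with $\X(G) \leq km/2$, should yield $m \leq (k+3)(n-2)$.

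For tightness when $k \leq 2$, explicit constructions suffice: a triangulation achieves $3(n-2)$ for $k=0$; a maximal $1$-planar graph obtained by adding both diagonals to every quadrilateral face of a planar quadrangulation achieves $4(n-2)$ for $k=1$; and an analogous optimal $2$-planar drawing achieves $5(n-2)$. The main obstacle is calibrating the discharging rules so that they produce the exact constant $k+3$ uniformly for $k \in \{1,2,3,4\}$: as $k$ grows, more intricate face configurations become admissible (e.g., triangular faces with several dummy vertices, or multiple triangular faces sharing a single dummy vertex), and each must be checked to receive enough charge without overdrawing its sources. The case analysis is delicate but, for $k \leq 4$, finite and tractable.
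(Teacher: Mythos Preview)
This theorem is not proved in the present paper; it is quoted from Pach and T\'oth~\cite{PT97} as background, and the paper's own contribution is to \emph{improve} the $k=4$ case from $7(n-2)$ to $6(n-2)$. So there is no proof here to compare against directly. Two remarks are nonetheless worth making.

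First, the actual proof in~\cite{PT97} is not a discharging argument on the planarization. Pach and T\'oth take a crossing-free subgraph $G_0\subseteq G$ with the maximum number of edges (so $|E(G_0)|\le 3(n-2)$, and in the extremal case $G_0$ triangulates the plane), and then charge each remaining edge $e\in E(G)\setminus E(G_0)$ to the first edge of $G_0$ that $e$ crosses when traversed from one endpoint. The core of the proof is a combinatorial bound on how many edges can be charged to a fixed edge $f$ of $G_0$ from a fixed side, using that all such edges emanate from the single vertex opposite $f$ in that triangular face and must pairwise cross each other in addition to crossing $f$. This is where the constraint $k\le 4$ enters, and where the constant $k+3$ comes from. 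It is a short, self-contained argument with essentially no case analysis.

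Second, your discharging sketch has a genuine gap at the step ``rewriting the resulting global inequality in terms of $n$, $m$, and $\X(G)$, and combining it with $\X(G)\le km/2$.'' Discharging does not produce a new global inequality: the total charge is invariant, and with your assignment it is always $-8$. What discharging can do is force the negative charge to concentrate on a specific set of elements, but you have not said which, nor what inequality this yields. Taken literally, the only global inequality available before refinement is Euler on $G^+$, namely $\X(G)\ge m-3(n-2)$; combining this with $\X(G)\le km/2$ gives $m\le \tfrac{6}{2-k}(n-2)$, which is $6(n-2)$ for $k=1$ and vacuous for $k\ge 2$. The successful discharging proofs in this area (including the one in the present paper) do not pass through $\X(G)\le km/2$ at all: they assign charges so that the total is a known multiple of $n$, and redistribute so that each original vertex ends up holding a fixed fraction of its degree, which bounds $\sum_v\deg(v)=2m$ directly. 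Your outline would need to be reworked along those lines, and at that point it would essentially reproduce the method of the present paper rather than the short argument of~\cite{PT97}.
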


Pach et al.~\cite{PR+06} observed that the upper bound in Theorem~\ref{thm:PT97}
applies also for not necessarily simple topological graphs when $k \leq 3$,
and proved a better bound for $k=3$.

\begin{theorem}[\cite{PR+06}]\label{thm:PR+06}
$e_3(n) \leq 5.5n-11$.
This bound is tight up to an additive constant.
\end{theorem}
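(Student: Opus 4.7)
The plan is to combine Euler's formula on the planarization of the drawing with the known bound $e_2(n) \le 5n-10$, thereby reducing the problem to the single task of bounding the number of edges crossed exactly three times.

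Take a topological graph $G$ on $n$ vertices realizing $m = e_3(n)$, drawn so that each edge is involved in at most three crossings. After performing standard normalizations (no two adjacent edges cross, and no two edges cross more than twice---in the non-simple setting the second reduction requires some care, as swapping arcs between two edges that share two crossings may need to be combined with local adjustments), partition the edges by crossing count: let $E_i$ be the set of edges crossed exactly $i$ times for $i = 0,1,2,3$, and set $m_i = |E_i|$. Removing $E_3$ leaves a not-necessarily-simple topological graph with at most two crossings per edge; by the extension of Theorem~\ref{thm:PT97} to the non-simple setting observed by Pach et al., we have $m_0 + m_1 + m_2 \le 5n - 10$, and hence
\[
  m \;\le\; 5n - 10 + m_3.
\]
It therefore suffices to prove that $m_3 \le (n-2)/2$.

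The \emph{main obstacle} is the bound on $m_3$. My plan is a discharging argument on the planarization $G^+$, whose faces satisfy the Euler identity
\[
  \sum_{f \in F(G^+)} \bigl(\ell(f) - 4\bigr) \;=\; 4n - 8 - 2m,
\]
where $\ell(f)$ denotes the length of face $f$. Faces of length $3$ contribute $-1$, while faces of length $\ge 5$ contribute $+1$ or more. The strategy is to show that near every $e \in E_3$ there is enough ``long-face slack'' to assign a positive charge of at least $+\tfrac{1}{2}$ to $e$, uniquely. The three crossings on $e$ split it into four subsegments, and in conjunction with the adjacency and multi-crossing normalizations, this constrains the possible configurations of the at most eight face-incidences of these subsegments. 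The combinatorial core is a structural lemma asserting that each $e \in E_3$ is incident to some long face that can be charged to $e$ without contention; handling the clustering of several $E_3$-edges near the same long face is the delicate point, and is precisely where non-simple drawings require ideas beyond Pach--T\'oth (for simple drawings the Pach--T\'oth bound $6(n-2)$ is already the best one gets from this template, so the extra $n/2$ savings must come from exploiting lenses and parallel crossings).

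For tightness up to an additive constant, I would start from an extremal construction for $e_2(n) = 5n - 10$ and overlay roughly $n/2$ extra edges, each drawn with exactly three crossings on carefully selected existing edges that still have a crossing slot available. Direct verification that this overlay preserves the $\le 3$ crossings-per-edge condition then yields $m \ge 5.5n - O(1)$, matching the upper bound up to lower-order terms.
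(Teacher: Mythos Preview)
This theorem is not proved in the present paper; it is quoted from~\cite{PR+06}. So there is no ``paper's proof'' to compare against here, only the original Pach--Radoi\v{c}i\'{c}--Tardos--T\'oth argument. That argument does use discharging on the planarization $M(G)$, but it bounds $m$ directly; it does \emph{not} proceed by splitting off $E_3$ and bounding $m_3$ separately.

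Your plan has a concrete gap: the inequality $m_3 \le (n-2)/2$ is false, so no discharging scheme can establish it. Take four straight-line segments in general position. Their eight endpoints give $n=8$; each segment meets the other three, so every edge has exactly three crossings and $m_3 = 4 > 3 = (n-2)/2$. Taking $k$ disjoint copies yields $n=8k$ and $m_3 = 4k = n/2 > (n-2)/2$ for every $k$, so the failure is not a small-$n$ artifact. These drawings are simple, have no adjacent crossings, and satisfy all your normalizations.

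The deeper issue is that the decomposition $m \le (5n-10) + m_3$ throws away information. After deleting $E_3$, the edges that remain may have \emph{far fewer} than two crossings, so the $e_2$ bound is slack; meanwhile $m_3$ itself can be linear in $n$ with constant well above $1/2$ (indeed, in extremal $k=3$ drawings one expects most edges to be saturated with three crossings, so $m_3$ should be close to $5.5n$, not $n/2$). Any successful discharging must track the interaction between edges of different crossing counts rather than bound them in two independent passes. That is exactly what the charge function $|f|+|V(f)|-4$ used in~\cite{PR+06} (and in the present paper for $k=4$) achieves.

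Your tightness sketch is in the right spirit, but note that the actual lower-bound constructions in~\cite{PR+06} are not obtained by overlaying a matching on an $e_2$-extremal drawing; they are built directly from tilings with carefully placed diagonals.
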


By Theorem~\ref{thm:PT97}, $e^*_4(n) \leq 7n-14$.
Pach et al.~\cite{PR+06} claim that similar arguments to their proof of Theorem~\ref{thm:PR+06}
can improve this bound to $(7-\frac{1}{9})n-O(1)$.
They also conjectured that the true bound is $6n-O(1)$.
Here we settle this conjecture on the affirmative. %, also for not necessarily simple topological graphs.

\begin{theorem}\label{thm:4crossings}
Let $G$ be a simple topological graph with $n \geq 3$ vertices.
If every edge of $G$ is involved in at most four crossings, 
then $G$ has at most $6n-12$ edges.
This bound is tight up to an additive constant.
\end{theorem}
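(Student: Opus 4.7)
\medskip
\noindent\textbf{Proof plan.} The plan is to argue by contradiction, combining a minimum-degree reduction with a discharging argument on the planarization of $G$. Suppose $G$ is a counterexample to Theorem~\ref{thm:4crossings} with $n+m$ minimum. A standard peeling argument then forces the minimum degree of $G$ to be at least $6$: deleting a vertex $v$ of degree $\leq 5$ removes at most $5$ edges, and since $m > 6n-12$ implies $m - 5 > 6(n-1) - 12$, the result would be a smaller counterexample. Writing $c = \X(G)$, the planarization $G^\times$ is a plane graph on $n + c$ vertices and $m + 2c$ edges in which the original vertices keep their $G$-degree and crossings become new degree-$4$ vertices. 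Euler's formula gives $F = m + c - n + 2$ faces. It is easy to check that $G^\times$ has no bigonal faces because $G$ is simple, so every face has at least three sides.

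I would then apply discharging. Assign to each vertex $v$ of $G^\times$ the charge $\mu(v) = \deg_{G^\times}(v) - 4$ and to each face $f$ the charge $\mu(f) = |f| - 4$; a direct computation shows $\sum \mu = -8$. Crossing-vertices have charge $0$, original vertices have charge at least $2$, and the only negatively charged objects are triangular faces, each contributing $-1$. The aim is to design local rules that redistribute charge from original vertices and from faces of length $\geq 5$ onto the triangles, so that every resulting charge is non-negative and $m > 6n - 12$ produces more negative initial charge than the surplus at original vertices can absorb; the resulting inequality will contradict $\sum \mu = -8$.

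The crucial structural input is that simplicity of $G$ together with the four-crossings-per-edge hypothesis forbids many local patterns around triangular faces of $G^\times$: an edge already carrying four crossings cannot participate in any further lens-swap or local redrawing, so saturated edges are structurally rigid, and the triangles of $G^\times$ they bound cluster in restricted ways. A useful auxiliary step is to invoke Theorem~\ref{thm:PR+06} on the subgraph $G'\subseteq G$ consisting of edges with at most three crossings: $G'$ is a topological graph satisfying the hypotheses of that theorem, so $|E(G')| \leq 5.5n - 11$, which shows that at least $m - (5.5n - 11)$ edges of $G$ are saturated and each such edge must ``pay'' for its four crossings through the discharging. One natural rule is that each original vertex $v$ sends $\tfrac{1}{2}(\deg_G(v) - 6)$ to each incident face, and that each face of size $\geq 5$ sends a fractional amount along each side to adjacent triangles.

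The main obstacle will be the case of triangles whose three corners are all crossing vertices, since no original-vertex charge is incident and the unit deficit must travel through a bounded neighbourhood of adjacent faces. Handling this case requires a careful enumeration of such triangles according to which of their boundary edges are saturated, and then exploiting simplicity of $G$ to locate a nearby non-saturated edge or large face that can provide the missing charge; this is where the gap between the naive Pach--T\'oth bound $7n - 14$ and the sharp bound $6n - 12$ is closed. Finally, tightness of the bound follows from explicit constructions of simple topological graphs achieving $6n - O(1)$ edges with at most four crossings per edge, e.g., by superimposing two carefully chosen planar triangulations on the same vertex set so that each edge of one triangulation is crossed at most four times by the other.
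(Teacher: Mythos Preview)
Your framework---discharging on the planarization---is the right one and matches the paper's approach, but the proposal contains essentially no proof: the entire difficulty of Theorem~\ref{thm:4crossings} lies in designing the discharging rules and verifying that they balance, which you defer with ``requires a careful enumeration.'' In the paper this occupies most of Section~\ref{sec:4crossings}: six discharging steps and five substantial case-analysis lemmas (Lemmas~\ref{lem:ch_6-0-pentagon-negative}--\ref{lem:ch_6-0-pentagon-1}), and after the $0$-triangles are handled one discovers that $0$-pentagons have gone negative and need their own layer of rules (Step~6)---nothing in your plan anticipates this. The one concrete rule you do state is arithmetically impossible: a vertex of degree $7$ has charge $3$ but would be asked to send $\tfrac{1}{2}$ to each of seven incident faces. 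The auxiliary appeal to Theorem~\ref{thm:PR+06} is a red herring: it yields a global edge count, whereas discharging is local, and you give no mechanism by which ``saturated edges pay for their crossings'' becomes a local rule; the paper never invokes Theorem~\ref{thm:PR+06} here.

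Two further issues with the setup. Your peeling argument only yields $\delta(G)\ge 6$, but deleting a vertex of degree at most $6$ still produces a smaller counterexample (since $m-6>6(n-1)-12$), so you can and should obtain $\delta(G)\ge 7$; the paper uses this repeatedly. You also need to reduce first to the case that the planarization is $2$-connected (Proposition~\ref{prop:2-connected}), so that every face boundary is a simple cycle---without this the case analysis does not get off the ground. The paper's charge function, $|f|+|V(f)|-4$ on faces with target $\deg(A)/3$ at each original vertex $A$, is equivalent in spirit to your $\deg-4$/$|f|-4$ scheme but localizes the original-vertex surplus to the faces containing it, which turns out to be much more convenient for routing charge to $0$-triangles and $1$-triangles. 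Finally, your tightness sketch (two overlaid planar triangulations) is not the paper's construction (a hexagonal tiling of a cylinder) and is not obviously realizable as a \emph{simple} topological graph with at most four crossings per edge; you would need to exhibit an explicit pair and verify both conditions.
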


In fact, Theorem~\ref{thm:4crossings} holds also for topological graphs that are not necessarily simple.
However, since this stronger statement is not needed to improve the Crossing Lemma,
and because proving it would complicate an already long and technical proof, 
we chose not to prove this generalization here.

Using the bound in Theorem~\ref{thm:4crossings} and following the footsteps of~\cite{PR+06,PT97} 
we obtain the following linear lower bound for the crossing number.

\begin{theorem}\label{thm:linear-new}
Let $G$ be a graph with $n>2$ vertices and $m$ edges.
Then $\X(G) \geq 5m-\frac{139}{6}(n-2)$.
\end{theorem}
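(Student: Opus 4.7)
The plan is to follow the iterative edge-removal argument of Pach and T\'oth~\cite{PT97}, refined by Pach, Radoi\v{c}i\'c, Tardos, and T\'oth~\cite{PR+06}, plugging in the new bound $e_4(n)\le 6(n-2)$ from Theorem~\ref{thm:4crossings} as the outermost input. Fix a crossing-minimum drawing of $G$, which may be assumed simple. Perform five deletion phases: for $j=5,4,3,2,1$ in that order, as long as some edge of the current drawing is involved in at least $j$ crossings, delete such an edge. Every Phase-$j$ deletion decreases the number of crossings by at least $j$, so if $r_j$ denotes the number of edges removed in Phase $j$ one obtains $\X(G)\ge \sum_{j=1}^{5} j\, r_j$.

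Phase $j$ ends exactly when every remaining edge is in at most $j-1$ crossings, so at that moment the drawing contains at most $e_{j-1}(n)$ edges. Combining Euler's formula with Theorems~\ref{thm:PT97}, \ref{thm:PR+06}, and~\ref{thm:4crossings} gives
\[
e_0(n)\le 3(n-2),\quad e_1(n)\le 4(n-2),\quad e_2(n)\le 5(n-2),\quad e_3(n)\le \tfrac{11}{2}(n-2),\quad e_4(n)\le 6(n-2),
\]
and therefore the tail constraints $\sum_{i\ge j} r_i\ge m-e_{j-1}(n)$ for $j=1,\dots,5$. The rest of the proof is to minimize $\sum_j j\,r_j$ subject to these inequalities and $r_j\ge 0$, which is a small linear program whose optimum is attained at a vertex of the feasible polytope and produces a bound of the form $\X(G)\ge 5m-\alpha(n-2)$.

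The main obstacle is pinning $\alpha$ down to $\tfrac{139}{6}$: a direct saturation of the five tail inequalities -- taking $r_5 = m-6(n-2)$, $r_4=r_3=\tfrac{1}{2}(n-2)$, $r_2=r_1=(n-2)$ -- already yields $\alpha=\tfrac{47}{2}$, but that is just short of what is needed to push the Crossing-Lemma constant beyond $1/29$ in the subsequent probabilistic-sampling step. I expect the required sharpening to follow the philosophy of~\cite{PR+06}: rather than charging only $j$ crossings for each Phase-$j$ deletion, track that the $j$ crossings lost belong to $j$ distinct surviving edges, each of whose current crossing number simultaneously drops by one, which tightens the effective constraints that feed into the later phases. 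Once this amortized bookkeeping is assembled consistently across all five phases, the stated bound $\X(G)\ge 5m-\tfrac{139}{6}(n-2)$ falls out of the same LP computation, and plugging it into the usual random-vertex-deletion trick of~\cite{ACNS82,AZ04} then yields the corresponding improvement to the Crossing Lemma quoted in the introduction.
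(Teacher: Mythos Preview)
Your five-phase LP is set up correctly, and your own computation already exposes the gap: saturating the tail constraints produces $\alpha=\tfrac{47}{2}$, strictly worse than the required $\tfrac{139}{6}$. At that point the proposal stops being a proof and becomes a hope: ``amortized bookkeeping'' that ``tightens the effective constraints'' is not an argument, and you never exhibit a concrete inequality that closes the $\tfrac{1}{3}$ gap between $\tfrac{47}{2}$ and $\tfrac{139}{6}$.

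The paper's proof is far simpler than what you are attempting. It does \emph{not} run five phases and does \emph{not} solve an LP. It uses the already-established bound~(\ref{eq:4m-103/6(n-2)}) from~\cite{PR+06},
\[
\X(G)\ \ge\ 4m-\tfrac{103}{6}(n-2),
\]
as a black box, and layers a \emph{single} Phase-$5$ deletion on top of it. Concretely: if $m\le 6(n-2)$ then~(\ref{eq:4m-103/6(n-2)}) is already at least $5m-\tfrac{139}{6}(n-2)$; if $m>6(n-2)$, repeatedly delete a most-crossed edge (each deletion removes $\ge 5$ crossings by Theorem~\ref{thm:4crossings}) until $6(n-2)$ edges remain, then apply~(\ref{eq:4m-103/6(n-2)}) to the remainder:
\[
\X(G)\ \ge\ 5\bigl(m-6(n-2)\bigr)+4\cdot 6(n-2)-\tfrac{103}{6}(n-2)\ =\ 5m-\tfrac{139}{6}(n-2).
\]
The constant $\tfrac{103}{6}$ already encodes the recursive ``maximal-subgraph plus previous linear bound'' sharpening of~\cite{PR+06} that you are gesturing at; you gain nothing by unrolling it back into five naive phases, and in fact you lose precisely the $\tfrac{1}{3}$ you are missing. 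The fix is not more bookkeeping but less: run one phase and cite~(\ref{eq:4m-103/6(n-2)}).
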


This linear bound is then used to get a better constant factor for the bound in the Crossing Lemma,
by plugging it into its probabilistic proof, as in~\cite{Mon05,PR+06,PT97}. 

\begin{theorem}\label{thm:crossing-lemma}
Let $G$ be a graph with $n$ vertices and $m$ edges. 
Then $\X(G) \geq \frac{1}{29} \frac{m^3}{n^2}-\frac{35}{29}n$.
If $m \geq 6.95n$ then $\X(G) \geq \frac{1}{29} \frac{m^3}{n^2}$.
\end{theorem}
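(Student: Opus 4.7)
The plan is to apply the Chazelle--Sharir--Welzl probabilistic-sampling argument, using Theorem~\ref{thm:linear-new} as the input linear lower bound. I fix an optimal drawing of $G$ and independently retain each vertex with probability $p \in (0,1]$, obtaining an induced topological subdrawing $G_S$ with $n_S$ vertices, $m_S$ edges, and $\X(G_S)$ crossings. Linearity of expectation gives $\E[n_S] = pn$, $\E[m_S] = p^2 m$, and the crucial fourth-power relation $\E[\X(G_S)] \leq p^4 \X(G)$, because a crossing in $G_S$ requires all four endpoints of its two edges to have been sampled.

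I apply Theorem~\ref{thm:linear-new} in the universal form $\X(G_S) \geq 5 m_S - \tfrac{139}{6}\,n_S$, valid for every $n_S \geq 0$ (it follows from the stronger Theorem~\ref{thm:linear-new} when $n_S \geq 3$, and is trivial for $n_S \leq 2$ since the right side is then non-positive). Taking expectations yields
$$p^4 \X(G) \;\geq\; 5 p^2 m - \frac{139}{6}\,pn, \qquad \text{hence} \qquad \X(G) \;\geq\; \frac{5m}{p^2} - \frac{139 n}{6 p^3}.$$
Optimizing, the right side is maximized at $p^\ast = \tfrac{139 n}{20 m}$, which lies in $(0, 1]$ exactly when $m \geq \tfrac{139}{20}n = 6.95\, n$. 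Substitution in that range gives
$$\X(G) \;\geq\; \frac{2000}{3 \cdot 139^2}\,\frac{m^3}{n^2} \;\geq\; \frac{1}{29}\,\frac{m^3}{n^2},$$
using $3 \cdot 139^2 = 57963 < 58000 = 29 \cdot 2000$. This settles the second assertion.

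For the first assertion when $m < 6.95\, n$, the target $\tfrac{1}{29}\tfrac{m^3}{n^2} - \tfrac{35}{29}\,n$ is non-positive whenever $m \leq 35^{1/3}\, n \approx 3.27\, n$, so the claim is trivial in that sub-range. In the remaining narrow window $3.27\, n < m < 6.95\, n$ the probabilistic amplification collapses (the critical $p^\ast$ exceeds $1$), and I would instead work with the pointwise maximum of Theorem~\ref{thm:linear-new} at $p = 1$, the Euler bound $\X(G) \geq m - 3(n-2)$, and the piecewise-linear bounds obtained by iterative edge removal from the edge-count inequalities of Theorems~\ref{thm:PT97}, \ref{thm:PR+06}, and \ref{thm:4crossings}. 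A one-variable case analysis in $x = m/n$ then reduces the problem to a few elementary polynomial inequalities on sub-intervals; the additive slack $-\tfrac{35}{29}n$ is presumably calibrated so that the analysis is just tight at the worst such point.

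The main technical obstacle lies in this boundary regime. Once $m$ drops below $6.95\, n$, probabilistic amplification no longer sharpens the raw linear bounds, so one must juggle them carefully to verify that the additive slack $-\tfrac{35}{29}n$ is sufficient everywhere in $(3.27\, n,\, 6.95\, n)$. This verification is routine but fussy, and it is exactly this step that pins down the specific constant $\tfrac{35}{29}$.
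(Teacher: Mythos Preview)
Your proposal is correct and follows essentially the same route as the paper. The dense case $m \ge 6.95n$ is handled identically (your optimal $p^\ast = \tfrac{139n}{20m}$ is exactly the paper's choice $p = 6.95n/m$), and for the sparse range the paper does precisely what you sketch: it uses the piecewise-linear bounds \eqref{eq:m-3(n-2)}--\eqref{eq:5m-139/6(n-2)}, identifies which is sharpest on each sub-interval of $m/(n-2)$ (breakpoints at $4$, $5.3$, $6$), and checks that each dominates $\tfrac{1}{29}\tfrac{m^3}{n^2}-\tfrac{35}{29}n$ there.
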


\paragraph{Albertson conjecture.}
The \emph{chromatic number} of a graph $G$, $\chi(G)$, is the minimum number of colors
needed for coloring the vertices of $G$ such that none of its edges has monochromatic endpoints.
In 2007 Albertson conjectured that if $\chi(G)=r$ then $\X(G) \geq \X(K_r)$.
That is, the crossing number of an $r$-chromatic graph is
at least the crossing number of the complete graph on $r$ vertices.

If $G$ contains a \emph{subdivision}\footnote{A subdivision of $K_{r}$ consists of $r$ vertices, 
each pair of which is connected by a path such that the paths are vertex disjoint (apart from their endpoints).} 
of $K_r$ as a subgraph, then clearly $\X(G) \geq \X(K_r)$.
A stronger conjecture (than Albertson conjecture and also than \emph{Hadwiger conjecture}\footnote{Hadwiger conjecture
states that if $\X(G)=r$ then $K_r$ is a minor of $G$.}) 
is therefore that if $\chi(G)=r$ then $G$ contains a subdivision of $K_r$.
However, this conjecture, which was attributed to Haj\'os, was refuted for $r \geq 7$~\cite{Cat79,EF81}.

Albertson conjecture is known to hold for small values of $r$:
For $r=5$ it is equivalent to the Four Color Theorem, whereas
for $r=6$, $r \leq 12$, and $r \leq 16$, it was verified by
Oporowskia and Zhao~\cite{OZ09}, Albertson, Cranston, and Fox~\cite{ACF09}, and Bar\'at and T\'oth~\cite{BT10}, respectively.  
By using the new bound in Theorem~\ref{thm:linear-new} and following the approach in~\cite{ACF09, BT10},
we can now verify Albertson conjecture for $r \leq 18$.

\begin{theorem}\label{thm:Albertson}
Let $G$ be an $n$-vertex $r$-chromatic graph.
If $r \leq 18$ or $r=19$ and $n \neq 37,38$, then $\X(G) \geq \X(K_r)$.
\end{theorem}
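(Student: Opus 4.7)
My plan is to adapt the template of~\cite{ACF09, BT10}, now equipped with the sharper linear crossing bound of Theorem~\ref{thm:linear-new}.

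Since $\chi(G)=r$, the graph $G$ contains an $r$-critical subgraph $H$ on some $n' \leq n$ vertices, and $\X(G) \geq \X(H)$ because crossing number is monotone under subgraphs. It therefore suffices to prove $\X(H) \geq \X(K_r)$, and in fact it is enough to prove the stronger inequality $\X(H) \geq Z(r)$, where
\[
Z(r) = \tfrac{1}{4}\lfloor r/2 \rfloor \lfloor (r-1)/2 \rfloor \lfloor (r-2)/2 \rfloor \lfloor (r-3)/2 \rfloor
\]
is the standard Guy upper bound on $\X(K_r)$ (known to equal $\X(K_r)$ for $r \leq 12$). If $n'=r$, a contraction argument on a missing edge shows $H = K_r$ and we are done. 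Otherwise $n' \geq r+2$ by a classical theorem of Dirac, since no $r$-critical graph has exactly $r+1$ vertices.

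Applying Dirac's degree bound $\delta(H) \geq r-1$ to get $|E(H)| \geq (r-1)n'/2$, and then invoking Theorem~\ref{thm:linear-new}, I obtain
\[
\X(H) \;\geq\; 5|E(H)| - \tfrac{139}{6}(n'-2) \;\geq\; \frac{(15r-154)\,n' + 278}{6}.
\]
For $r \geq 11$ this lower bound is increasing in $n'$, so it exceeds $Z(r)$ as soon as $n' \geq n_0(r)$ for an explicit threshold $n_0(r)$ obtained by solving a linear inequality. For $r \leq 16$ the theorem is already established in~\cite{BT10}, so only $r \in \{17,18,19\}$ require separate attention, and in each case the only remaining work is the window $r+2 \leq n' < n_0(r)$.

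To close this window, I would replace Dirac's bound by stronger edge-density estimates for small $r$-critical graphs (e.g.\ Gallai's strengthening, or bounds in the spirit of Kostochka--Stiebitz, each of which adds a positive $O(r)$ term to $(r-1)n'/2$), feed them back into Theorem~\ref{thm:linear-new}, and perform a finite case check that the resulting bound still dominates $Z(r)$ for every $n'$ in the window.

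The main obstacle, and the reason for the exclusion $n \in \{37,38\}$ when $r=19$, is precisely this small-$n'$ regime. Theorem~\ref{thm:linear-new} contributes a bound that is only linear in $n'$, whereas $Z(r)$ scales as $r^4$; the slack is narrow and shrinks as $r$ grows. For $r=19$ and $n \in \{37,38\} = \{2r-1, 2r\}$, the combined linear-crossing-plus-density estimate just fails to reach $Z(19)=1296$, which is exactly why the statement must record these as exceptions. Verifying that the bound does succeed everywhere else — that is, for all $(r,n)$ with $r \leq 18$, or with $r=19$ and $n \notin \{37,38\}$ — then completes the proof.
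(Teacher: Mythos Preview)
Your overall template is right, but the crucial step---closing the finite window of small $n'$---does not work with the raw linear bound of Theorem~\ref{thm:linear-new} alone. Take $r=18$, $n'=23$: Gallai's exact formula gives $m=228$, and plugging into $5m-\tfrac{139}{6}(n'-2)$ yields only $1140-486.5=653.5$, far below $Z(18)=1008$. In fact the raw linear bound combined with the best available edge-density estimates (Gallai, Kostochka--Stiebitz) fails for every $23\le n'\le 45$ when $r=18$, and the gap is similar for $r=17,19$. So the ``finite case check'' you describe would simply not go through.

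What the paper does instead is pass the linear bound through the probabilistic amplification: sample vertices independently with probability $p$, apply Theorem~\ref{thm:linear-new} to the random induced subgraph, and divide out by $p^4$. This yields, for any $0<p\le 1$,
\[
\X(G)\ \ge\ \frac{5m}{p^2}-\frac{139n}{6p^3}+\frac{139}{3p^4}-o(1),
\]
and one then \emph{optimizes} $p$ separately for each small $n'$ in the window. With $n'=23$, $m=228$, and $p\approx 0.555$ this gives $\X\ge 1073>1008$, which is exactly the margin you were missing. The choice of $p$ is what makes the finite check succeed; you cannot replace it by simply strengthening the edge-density input. A secondary point: for $r=19$ the table in fact fails at $n'=36,37,38$, not just $37,38$; the case $n'=36=2r-2$ is rescued by a separate structural argument (Gallai's theorem that a $(2r-2)$-vertex $r$-critical graph is a join of two smaller critical graphs), which your proposal does not mention.
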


\paragraph{Organization.}
The bulk of this paper is devoted to proving Theorem~\ref{thm:4crossings} in Section~\ref{sec:4crossings}.
In Section~\ref{sec:applications} we recall how the improved crossing numbers are obtained,
and their consequences.

%%%%%%%%%%%%%%%%%%%%%%%%%%%%%%%%%%%%%%%%%%%%%%%%%%%%%%%%%%%%%%%%%%%%%%%%%%%%%%%%%%%%%%%%%
\section{Proof of Theorem~\ref{thm:4crossings}}
\label{sec:4crossings}
%%%%%%%%%%%%%%%%%%%%%%%%%%%%%%%%%%%%%%%%%%%%%%%%%%%%%%%%%%%%%%%%%%%%%%%%%%%%%%%%%%%%%%%%%

Most of this section (and of the paper) is devoted to proving the upper bound in Theorem~\ref{thm:4crossings}.
Let $G$ be a topological graph with $n\geq 3$ vertices and at most four crossings per edge.
We prove that $G$ has at most $6n-12$ edges by induction on $n$.
For $n \leq 10$ we have $6n-12 > {{n}\choose{2}}$ and thus the theorem trivially holds.
Therefore, we assume that $n \geq 11$.
Furthermore, we may assume that the degree of every vertex in $G$ is at least $7$,
for otherwise the theorem easily follows by removing a vertex of a small degree
and applying the induction hypothesis.

For a topological graph $G$ we denote by $M(G)$ the plane map induced by $G$.
That is, the vertices of $M(G)$ are the vertices and crossing points in $G$,
and the edges of $M(G)$ are the crossing-free segments of the edges of $G$
(where each such edge-segment connects two vertices of $M(G)$).
We will use capital letters to denote the vertices of $G$, and small letters to denote
crossing points in $G$ (that are vertices in $M(G)$).
%When the context is clear, we will sometimes refer to an edge of $G$ by its endpoints, e.g., $(A,B)$,
%although $G$ may have several parallel edges between the same pair of vertices.
An edge of $M(G)$ will usually be denoted by its endpoints, e.g., $xy$,
whereas for an edge of $G$ we will use the standard notation, e.g., $(A,B)$.\footnote{Unless stated otherwise, every edge is not oriented.}

We first show that we may assume that the vertex-connectivity of $M(G)$ is at least $2$.

\begin{prop}\label{prop:2-connected}
If $M(G)$ is not $2$-connected, then $G$ has at most $6n-12$ edges.
\end{prop}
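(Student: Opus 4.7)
The plan is to use the inductive hypothesis of Theorem~\ref{thm:4crossings} (induction on $n$, already set up in the preamble of this section): any cut vertex of $M(G)$ produces a decomposition of $G$ into topological subgraphs on fewer than $n$ vertices to which induction applies. I would fix a cut vertex $v$ of $M(G)$, let $C_1, C_2$ be two components of $M(G)\setminus v$, and split the argument into two cases depending on whether $v$ is a vertex of $G$ or a crossing point.

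\emph{Case 1: $v$ is a vertex of $G$.} Since no edge of $G$ passes through $v$ unless incident to it, every edge has its drawing inside some $\overline{C_i}:=C_i\cup\{v\}$; an edge incident to $v$ is assigned to the unique $\overline{C_i}$ determined by the direction in which it leaves $v$. I would let $G_i$ be the topological subgraph consisting of these edges and their endpoints. Then $V(G_1)\cap V(G_2)\subseteq\{v\}$ and $|V(G_1)|+|V(G_2)|\le n+1$. Because every non-$v$ vertex in $V(G_i)$ keeps its full set of incident edges inside $G_i$, the minimum-degree hypothesis $\delta(G)\ge 7$ forces $|V(G_i)|\ge 8$ for each non-empty piece, so induction applies to both pieces and summing yields $|E(G)|\le 6(n+1)-24=6n-18$.

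\emph{Case 2 (the main obstacle): $v$ is a crossing point, of two edges $e_1,e_2\in E(G)$.} Here $e_1$ and $e_2$ themselves traverse $v$ and straddle the cut, so they cannot be cleanly assigned to a single side. My plan is to set $V_i:=C_i\cap V(G)$ (note $v\notin V(G)$) and $G_i:=G[V_i]$, and to observe that every edge of $G$ other than $e_1,e_2$ has its $M(G)$-path avoiding $v$ and therefore lies inside a single $G_i$. This yields $|E(G)|\le|E(G_1)|+|E(G_2)|+2$ with $|V(G_1)|+|V(G_2)|=n$. What needs verification is that each $V_i$ is large enough for induction: every non-empty $C_i$ receives at least one arm at $v$, so at least one endpoint of $e_1$ or $e_2$ lies in $V_i$; that vertex has at most two of its $\ge 7$ incident edges among $\{e_1,e_2\}$, contributing $\ge 5$ neighbors inside $V_i$, so $|V_i|\ge 6$. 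Induction then gives $|E(G)|\le 6n-22$. Both cases comfortably beat $6n-12$; the minimum-degree assumption $\delta(G)\ge 7$ established before the proposition is precisely what ensures the pieces in Case~2 are large enough to invoke the inductive hypothesis.
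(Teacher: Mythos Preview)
Your proof is correct and follows essentially the same inductive strategy as the paper: split at a cut vertex of $M(G)$ and apply the induction hypothesis to the two sides, handling separately the cases where the cut vertex lies in $V(G)$ or is a crossing point. The only difference is in the bookkeeping of Case~2: the paper adds the crossing point $v$ as a new vertex of $G$ (so each of $e_1,e_2$ is subdivided into two edges) and then repeats the Case~1 argument on the resulting $(n{+}1)$-vertex graph, whereas you set aside $e_1,e_2$ and work directly with the induced subgraphs $G[V_i]$ on the two sides---your accounting in fact yields the slightly stronger $|E(G)|\le 6n-22$, but either bound suffices.
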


\begin{proof}
Assume that $M(G)$ has a vertex $x$ such that $M(G) \setminus \{x\}$ is not connected.
The vertex $x$ is either a vertex of $G$ or a crossing point of two of its edges.
Suppose that $x$ is vertex of $G$. Then, $G \setminus \{x\}$ is also not connected.
Let $G_1,\ldots,G_k$ be the connected components of $G \setminus \{x\}$,
let $G'$ be the topological graph induced by $V(G_1) \cup \{x\}$
and let $G''$ be the topological graph induced by $V(G_2) \cup \ldots \cup V(G_k) \cup \{x\}$.
Note that $6 \leq |V(G')|,|V(G'')| < n$, since $\delta(G) \geq 7$.
Therefore, it follows from the induction hypothesis that $|E(G)| \leq 6|V(G')|-12 + 6|V(G'')|-12 = 6(n+1)-24 < 6n-12$.

Suppose now that $x$ is a crossing point of two edges $e_1$ and $e_2$.
Let $\hat{G}$ be the topological graph we obtain by adding $x$ as a vertex to $G$.
Therefore, $|V(\hat{G})|=n+1$ and $|E(\hat{G})|=|E(G)|+2$.
Let $G_1,\ldots,G_k$ be the connected components of $\hat{G} \setminus \{x\}$,
let $G'$ be the topological graph induced by $V(G_1) \cup \{x\}$
and let $G''$ be the topological graph induced by $V(G_2) \cup \ldots \cup V(G_k) \cup \{x\}$.
Note that $6 \leq |V(G')|,|V(G'')| < n$, since $\delta(G) \geq 7$.
Therefore, it follows from the induction hypothesis that $|E(G)| \leq 6|V(G')|-12 + 6|V(G'')|-12 - 2 = 6(n+2)-26 < 6n-12$.
\end{proof}

In light of Proposition~\ref{prop:2-connected}, we may assume henceforth that $M(G)$ is $2$-connected.
The \emph{boundary} of a face $f$ in $M(G)$ consists of all the edges of $M(G)$ that are incident to $f$.
Since $M(G)$ is $2$-connected, the boundary of every face in $M(G)$ is a simple cycle.
Thus, we can define the \emph{size} of a face $f$, $|f|$, as the number of edges of $M(G)$ on its boundary.
We will keep this fact in mind when analyzing some cases later.

\begin{obs}\label{obs:simple-cycle}
The boundary of every face in $M(G)$ is a simple cycle.
\end{obs}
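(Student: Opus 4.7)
The plan is to deduce the observation directly from $2$-connectedness of $M(G)$, which we may assume thanks to Proposition~\ref{prop:2-connected}: if $M(G)$ is not $2$-connected, then $G$ has at most $6n-12$ edges and the theorem is already established for $G$, so there is nothing to prove about face boundaries. Thus the only case in which Observation~\ref{obs:simple-cycle} is needed is when $M(G)$ is a $2$-connected plane graph, and the claim becomes the classical fact that face boundaries of $2$-connected plane graphs are simple cycles.

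To carry this out, I would first trace the boundary of a face $f$ as a closed walk $W_f$: starting from any vertex on $\partial f$, walk along incident edges while keeping $f$ consistently on the same side; this is well defined because $M(G)$ has no isolated vertices inside $f$ (all faces are bounded by edges that lie in a single $2$-connected block). Next, I would argue that $W_f$ visits each vertex exactly once. Suppose, toward a contradiction, that $W_f$ passes through some vertex $v$ twice. Then the cyclic sequence of edges around $v$ is split by $f$ into (at least) two angular sectors, both incident to the same face $f$. I would then show that $v$ must be a cut vertex of $M(G)$: any path in $M(G) \setminus \{v\}$ connecting an edge from one sector to an edge from the other sector, together with the two portions of $W_f$ between the two visits to $v$, would form a closed curve in the plane that separates the interior of $f$, contradicting the fact that $f$ is a single face. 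This contradicts the assumed $2$-connectivity, so $W_f$ has no repeated vertex and is therefore a simple cycle.

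I do not expect genuine obstacles here; the statement is a standard textbook fact (e.g., it appears as an exercise in Diestel's book), and the work consists of translating ``$2$-connected'' into the non-existence of a cut vertex and then using planarity of the embedding. The only mildly delicate point is justifying that two visits of $W_f$ to the same vertex really do yield two angular sectors both bordering $f$ and hence a Jordan-curve obstruction; this is a routine local argument in the rotation system around $v$. Since the observation's role later in the paper is purely to let us speak of $|f|$ as the length of a simple cycle (and thus to apply Euler-type edge counts per face), no quantitative refinement is needed beyond this qualitative conclusion.
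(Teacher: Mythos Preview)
Your proposal is correct and matches the paper's approach: the paper simply asserts that since $M(G)$ is $2$-connected (which may be assumed by Proposition~\ref{prop:2-connected}), every face boundary is a simple cycle, treating this as a standard fact without further justification. Your sketch supplies the routine details the paper omits.
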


We use the \emph{Discharging Method} to prove Theorem~\ref{thm:4crossings}.
This technique, that was introduced and used successfully for proving structural properties of planar graphs
(most notably, in the proof of the Four Color Theorem~\cite{AH77}),
has recently proven to be a useful tool also for solving 
several problems in geometric graph theory~\cite{Ac09,AT07,AFKMT12,KP11,RT08}.
In our case, we begin by assigning a \emph{charge} to every face of the planar map $M(G)$ such that the total charge is $4n-8$.
Then, we redistribute the charge in several steps such that eventually the charge of every face is non-negative
and the charge of every vertex $A \in V(G)$ is $\deg(A)/3$.
Hence, $2|E(G)|/3 = \sum_{A \in V(G)} \deg(A)/3 \leq 4n-8$ and we get the desired bound on $|E(G)|$.
Next we describe the proof in details. 
Unfortunately, as it often happens when using the discharging method,
the proof requires considering many cases and subcases.

\paragraph{Charging.}
Let $V'$, $E'$, and $F'$ denote the vertex, edge, and face sets of $M(G)$, respectively.
For a face $f \in F'$ we denote by $V(f)$ the set of vertices of $G$ that are incident to $f$.
It is easy to see that $\sum_{f \in F'} |V(f)| = \sum_{A \in V(G)} \deg(A)$ and that
$\sum_{f \in F'} |f| = 2|E'| = \sum_{u \in V'} \deg(u)$.
Note also that every vertex in $V' \setminus V(G)$ is a crossing point in $G$
and therefore its degree in $M(G)$ is four. Hence,
$$
    \sum_{f \in F'} |V(f)| = \sum_{A \in V(G)} \deg(A) =
        \sum_{u \in V'} \deg(u) - \sum_{u \in V' \setminus V(G)}\deg(u) =
    2|E'| - 4\left(|V'|-n\right).
$$
Assigning every face $f \in F'$ a charge of $|f|+|V(f)|-4$,
we get that total charge over all faces is
$$
    \sum_{f \in F'}\left(|f|+|V(f)|-4\right) = 2|E'|+ 2|E'| - 4\left(|V'|-n\right) - 4|F'| = 4n-8,
$$
where the last equality follows from Euler's Polyhedral Formula by which $|V'|+|F'|-|E'| = 2$
(recall that $M(G)$ is connected).

\paragraph{Discharging.}
We will redistribute the charges in several steps.
We denote by $ch_i(x)$ the charge of an element $x$ (either a face in $F'$ or a vertex in $V(G)$) after the $i$th step,
where $ch_0(\cdot)$ represents the initial charge function.
We will use the terms \emph{triangles}, \emph{quadrilaterals} and \emph{pentagons}
to refer to faces of size $3$, $4$ and $5$, respectively.
An integer before the name of a face denotes the number of original vertices it is incident to.
For example, a $2$-triangle is a face of size $3$ that is incident to $2$ original vertices.
% It follows from our choice of $G$ (using Lemma~\ref{lem:good}) that if $V(f)>1$ for a face $f$,
% then $f$ is a triangle.
Since $G$ is a simple topological graph, there are no faces of size $2$ in $F'$.
Therefore, initially, the only faces with a negative charge are $0$-triangles.

In order to describe the way the charge of $0$-triangles (and later also of $1$-triangles) becomes non-negative,
we will need the following definitions.
Let $f$ be a face, let $e$ be one of its edges, and let $f'$ be the other face that shares $e$ with $f$.
We say that $f'$ is the \emph{immediate neighbor} of $f$ at $e$.
Note that $f' \neq f$ since $M(G)$ is $2$-connected.

\smallskip\noindent\textbf{Wedge-neighbors.}
Let $f_0$ be a triangle in $M(G)$ and let $x_1$ and $y_1$ be two vertices of $f_0$ that are crossing points in $G$.
Denote by $e_x$ (resp., $e_y$) the edge of $G$ that contains $x_1$ (resp., $y_1$) and does not contain $y_1$ (resp., $x_1$).
Note that $e_x$ and $e_y$ intersect at the other vertex of $f_0$.
Let $f_1$ be the immediate neighbor of $f_0$ at $x_1y_1$.
For $i \geq 1$, if $f_{i}$ is a $0$-quadrilateral,
then denote by $x_{i+1}y_{i+1}$ the edge of $M(G)$ opposite to $x_iy_i$ in $f_{i}$,
such that $e_x$ contains $x_{i+1}$ and $e_y$ contains $y_{i+1}$,
and let $f_{i+1}$ be the immediate neighbor of $f_i$ at $x_{i+1}y_{i+1}$.
Observe that $f_i \neq f_j$ for $i < j$, for otherwise $x_j$ coincides with one of $x_i$ and $x_{i+1}$
(which implies that $e_x$ crosses itself) or with one of $y_i$ and $y_{i+1}$ (which implies that $e_x$ and $e_y$ intersect more than once).
Let $j$ be the maximum index for which $f_j$ is defined. 
Note that $j \leq 4$ since the existence of $f_5$ would imply that each of $e_x$ and $e_y$ are crossed more than four times.
The \emph{wedge} of $f_0$ consists of $\bigcup_{i=0}^{j-1} f_i$ and we call $f_j$ the \emph{wedge-neighbor} of $f_0$ at $x_1y_1$
(note that $f_j$ is uniquely defined).
We also say that $f_0$ is the wedge-neighbor of $f_j$ at $x_jy_j$
(see Figure~\ref{fig:first-steps}).

Observe that since the relations being an immediate neighbor at a certain edge of $M(G)$
and being an opposite edge in a $0$-quadrilateral are both one-to-one,
it follows that indeed there cannot be another triangle but $f_0$ that is a wedge-neighbor of $f_j$ at $x_jy_j$.
Note also that since $e_x$ and $e_y$ already intersect at a vertex of $f_0$, and by definition $f_j$ cannot be a $0$-quadrangle,
either $|f_j| \geq 5$ or $|f_j|=4$ and $|V(f_j)| \geq 1$.
Let us summarize these observations for future reference.

\begin{obs}\label{obs:one-wedge-neighbor}
Let $f$ be a face and let $e$ be one of its edges.
Then there is at most one triangle $t$ such that $t$ is a wedge-neighbor of $f$ at $e$.
If such a triangle exists, then either $|f| \geq 5$ or $|f|=4$ and $|V(f)| \geq 1$.
\end{obs}

\noindent\textbf{Step 1: Charging $0$-triangles.}
Let $f_0$ be a $0$-triangle, let $e_1=x_1y_1$ be one of its edges, and let $f_j$ be
its wedge-neighbor at $x_1y_1$ as defined above.
Then $f_j$ contributes $1/3$ units of charge to $f_0$ through $x_jy_j$ (see Figure~\ref{fig:step1}).
\begin{figure}[ht]
    \centering
    \subfigure[Step~1: The $0$-triangle $f_0$ receives $1/3$ units of charge from its wedge-neighbor $f_2$.]{\label{fig:step1}
    {\includegraphics[width=4cm]{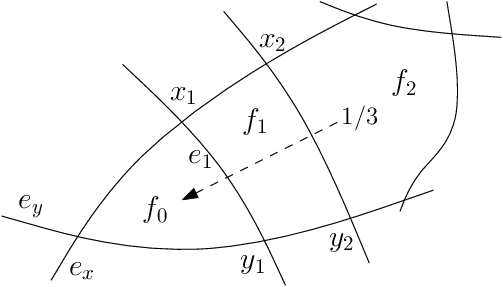}}}
        \hspace{5mm}
    \subfigure[Step~2: The vertex $A \in V(G)$ receives $1/3$ units of charge from each face that is incident to it.]{\label{fig:step2}
    {\includegraphics[width=4cm]{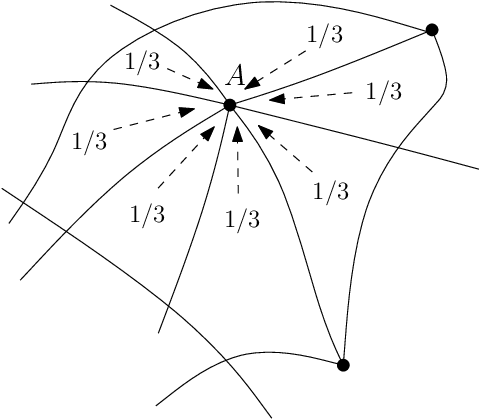}}}
        \hspace{5mm}
    \subfigure[Step~3: If the neighbors of the $1$-triangle $f_0$, $g_1$ and $g_2$, are $1$-triangles, then $f_0$ receives $1/3$ units of charge from its wedge-neighbor $f_3$ through $x_3y_3$.]{\label{fig:step3}
    {\includegraphics[width=4cm]{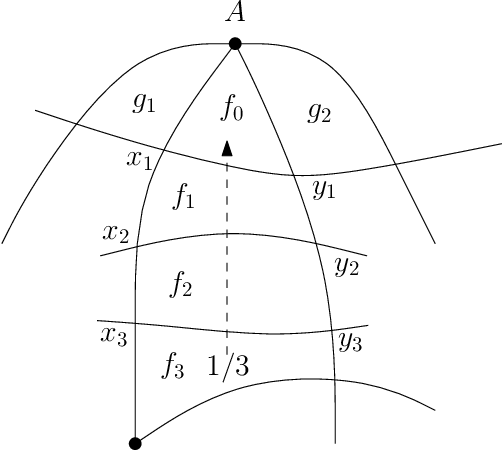}}}
	\caption{The first three discharging steps.}
	\label{fig:first-steps}
\end{figure}

In a similar way $f_0$ obtains $2/3$ units of charge from the two other `directions' that correspond to its two other edges.
Thus, after the first discharging step the charge of every $0$-triangle is zero. $\hfill\leftrightsquigarrow$

% Note that in at most one of the three `directions' in which a $0$-triangle $t$ `seeks' charge
% it can encounter a $0$-quadrilateral.
% Indeed, two neighboring $0$-quadrilaterals to $t$ would imply that the third neighboring face
% has two original vertices and size greater than three and hence is not good.

\medskip
Recall that according to our plan, the charge of every original vertex should be
one third of its degree. The next discharging step takes care of this.

\medskip
\noindent\textbf{Step 2: Charging the vertices of $G$.}
In this step every vertex of $G$ takes $1/3$ units of charge from each face it is incident to
(see Figure~\ref{fig:step2}).$\hfill\leftrightsquigarrow$

\medskip

Note that after Step~2, the charge of every $1$-triangle is $-1/3$.
In the following three steps, every $1$-triangle will obtain $1/3$ units of charge
from neighboring faces or its wedge-neighbor.
Let $f_0$ be a $1$-triangle and let $A \in V(G)$ be the vertex of $G$ that is incident to $f_0$.
Let $g_1$ and $g_2$ be the two faces that share an edge of $M(G)$ with $f_0$ and are also incident to $A$.
We call $g_1$ and $g_2$ the \emph{neighbors} of $f_0$ (see Figure~\ref{fig:step3} for an example).
Note that it is impossible that $g_1=g_2$, for otherwise $\deg(A)=2 < 7 \leq \delta(G)$ or $A$ is cut vertex in $M(G)$.
If both $g_1$ and $g_2$ are $1$-triangles, then $f_0$ must obtain the missing charge from its wedge-neighbor.

\medskip
\noindent\textbf{Step 3: Charging $1$-triangles with `poor' neighbors.} % or rich wedge-neighbors.}
If $f_0$ is a $1$-triangle whose two neighbors are $1$-triangles,
then the wedge-neighbor of $f_0$ contributes $1/3$ units of charge to $f_0$
through the edge of $M(G)$ that it shares with the wedge of $f_0$ (see Figure~\ref{fig:step3}).$\hfill\leftrightsquigarrow$
\medskip

From Observation~\ref{obs:one-wedge-neighbor} and the definition of Steps~1 and~3, we have:

\begin{obs}\label{obs:steps1and3}
Let $f$ be a face and let $e$ be one of its edges.
Then $f$ contributes charge at most once through $e$ during Steps~1 and~3.
Moreover, if $f$ contributes charge through $e$ in (either) Step~1 or~3, then the endpoints of $e$ are crossing points in $G$.
\end{obs}

The following facts will also be useful later on.

\begin{obs}\label{obs:extreme}
Let $f$ be a face, let $q$ be a vertex of $f$ that is a crossing point in $G$,
and let $v_1$, $v_2$, $v_3$ and $v_4$ be the neighbors of $q$ in $M(G)$
such that $v_1$ and $v_4$ are vertices of $f$.
Then if $v_2$ and $v_3$ are vertices of $G$, then $f$ does not contribute charge
through $v_1q$ and $v_4q$ neither in Step~1 nor in Step~3
(see Figure~\ref{fig:extreme} for an illustration).
\end{obs}

\begin{figure}[ht]
    \centering
    \subfigure[If $v_2,v_3 \in V(G)$ and $v_2q, v_3q \in E'$, then $f$ does not contribute charge through $v_1q$ and $v_4q$ in Steps~1 and~3.]{\label{fig:extreme}
    {\includegraphics[width=4cm]{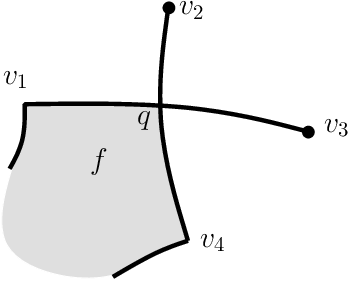}}}
    \hspace{10mm}
    \subfigure[$f$ does not contribute charge through $e$, $e_1$ and $e_2$ in Steps~1 and~3.]{\label{fig:1-triangle-neighbor}
    {\includegraphics[width=4cm]{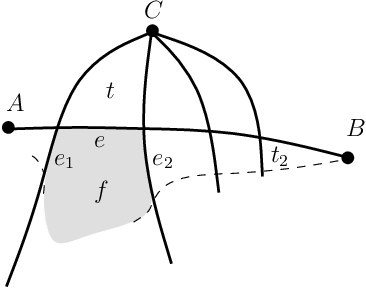}}}
	\caption{Certain cases where a face does not contribute charge in Steps~1 and~3.}
	\label{fig:extreme-and-cor}
\end{figure}

\begin{prop}\label{prop:1-triangle-neighbor}
Let $f$ be a face, let $e$ be one of its edges and let $t$ be a $1$-triangle that is the wedge-neighbor of $f$ at $e$.
If $t$ is a neighbor of a $1$-triangle that receives charge in Step~3,
then $f$ does not contribute charge through $e$ in Steps~1 and~3.
Furthermore, if the wedge of $t$ contains no $0$-quadrilaterals, then $f$ does not contribute charge in Steps~1 and~3 through
each of its edges that are incident to $e$
(see Figure~\ref{fig:1-triangle-neighbor} for an illustration).
\end{prop}

\begin{proof}
Since $t$ is a $1$-triangle $f$ cannot contribute charge through $e$ in Step~1.
Denote by $(A,B)$ the edge of $G$ that contains $e$ and let $C$ be the vertex of $G$ that is incident to $t$ (note that $C \neq A,B$ for otherwise $(A,B)$ crosses itself).
Since $t$ is a neighbor of a $1$-triangle that receives charge in Step~3, $(A,B)$ already contains four crossings
and therefore the other neighbor of $t$ must be incident to two original vertices.
Thus, $f$ does not contribute charge through $e$ in Step~3.

Assume that the wedge of $t$ contains no $0$-quadrilaterals and let $e_1$ and $e_2$ be
the edges of $f$ that are incident to $e$, as depicted in Figure~\ref{fig:1-triangle-neighbor}.
By Observation~\ref{obs:extreme} $f$ does not contribute charge in Steps~1 and~3 through $e_1$.
Suppose that $f$ contributes charge to a triangle $t_2$ through $e_2$ in Step~1 or~3.
Note that the wedge of this triangle must contain two $0$-quadrilaterals,
for otherwise $t_2$ would be a wedge-neighbor of a $1$-triangle (either the neighbor of $t$ or
a neighbor of this neighbor), which is impossible by Observation~\ref{obs:one-wedge-neighbor}.
If $t_2$ is a $0$-triangle, then $(A,B)$ would have five crossings.
Therefore, $t_2$ must be a $1$-triangle. 
However, one neighbor of this $1$-triangle is incident to $C$ and one endpoint of $(A,B)$
and this implies that $t_2$ does not receive charge in Step~3.
\end{proof}

\begin{prop}\label{prop:e_1-e_2}
Let $f$ be a face that contributes charge in Step~3 to a $1$-triangle $t$ through one of its edges $e$ such that $e$ is also an edge of $t$
(that is, the wedge of $t$ contains no $0$-quadrilaterals).
Then $f$ does not contribute charge in Step~1 or~3 through neither of its two edges that are incident to $e$. 
\end{prop}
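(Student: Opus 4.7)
The plan is to pin down the local combinatorial structure forced by the hypothesis, then eliminate the offending sub-cases one at a time. Let $A$ be the unique $V(G)$-vertex on $t$, let $e = xy$ be the edge of $t$ opposite $A$, lying on the $G$-edge $e^*$, and let $\alpha,\beta$ denote the edges of $G$ at $A$ containing $Ax,Ay$. Write $e_L$ (resp.\ $e_R$) for the edge of $f$ incident to $x$ (resp.\ $y$) other than $e$; these lie on $\alpha$ and $\beta$ respectively. By symmetry it suffices to rule out a Step~1 or Step~3 contribution through $e_L$. The key structural input comes from Step~3 having fired on $t$: both neighbors $g_1,g_2$ of $t$ must be $1$-triangles, and inspection shows that $g_1$ is the triangle on $A,x,x'$, where $x'$ is the crossing of $e^*$ with a second edge $\alpha'$ of $G$ at $A$; symmetrically $g_2$ produces $y'$ on $e^*$ with a fourth edge $\beta'$ at $A$. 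Thus $e^*$ is saturated at exactly four crossings $x',x,y,y'$, and in particular the $M(G)$-edge of $e^*$ continuing from $x'$ away from $y$ terminates at an endpoint $U \in V(G)$ of $e^*$; this $U$ is distinct from $A$.

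Let $\mu$ be the face of $M(G)$ across $e_L$ from $f$. By Observation~\ref{obs:step1} (and the analogous remark about Step~3 made in the text), the second endpoint of $e_L$ is a crossing, namely the next crossing $x_1$ along $\alpha$ after $x$. The cyclic order of the four $M(G)$-edges $Ax,\ xy,\ xx_1,\ xx'$ at $x$ forces $\mu$ to be bounded at $x$ by $xx_1$ and $xx'$. Suppose first that $\mu$ is a triangle. Then its third vertex is reached from $x$ via $xx'$, so it must be $x'$. If $\mu$ is a $1$-triangle (the sub-case that $\mu = t''$ is the terminating $1$-triangle of a Step~3 wedge of length one), this is impossible because $x'$ is a crossing rather than a $V(G)$-vertex. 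If $\mu$ is a $0$-triangle (the sub-case that $f$ contributes to $\mu = t'$ directly in Step~1), then $\mu$ has vertices $x,x_1,x'$, and the side $x_1 x'$ must lie on an edge of $G$ that crosses $\alpha$ at $x_1$ and $\alpha'$ at $x'$; this edge is forced to equal $\alpha'$, contradicting that $\alpha$ and $\alpha'$ share the endpoint $A$ and hence cannot cross in a simple topological graph.

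The only remaining possibility is that $\mu$ is a $0$-quadrilateral appearing inside a Step~1 chain or a Step~3 wedge. Tracing its boundary gives vertices $x,x_1,z,x'$, with the side $x'z$ opposite $e_L$ lying on $\alpha'$. The next face $\mu'$ in the chain (or wedge) is across $x'z$ from $\mu$, and the cyclic order at $x'$ (where the two $e^*$-segments $x'x,\ x'U$ are opposite and the two $\alpha'$-segments $x'A,\ x'z$ are opposite) forces $\mu'$ to be bounded there by $x'z$ and $x'U$. Hence $U \in V(G)$ is a vertex of $\mu'$. This immediately rules out $\mu'$ being either another $0$-quadrilateral (needed for a longer chain/wedge) or the terminal $0$-triangle of a Step~1 chain. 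The only surviving scenario is that $\mu' = t''$ is the initiating $1$-triangle of a length-two Step~3 wedge, so $t''$ has vertices $x',z,U$. But then the neighbor of $t''$ across $x'U$ is bounded at $x'$ by $x'U$ and $x'A$, so it contains both $V(G)$-vertices $U$ and $A$; it therefore cannot be a $1$-triangle, which violates the Step~3 trigger condition on $t''$. All sub-cases are eliminated. The main obstacle is the careful cyclic-order bookkeeping at $x$ and $x'$, but the two recurring engines of the argument are the simple-graph property that two edges sharing an endpoint do not cross, and the saturation of $e^*$ at four crossings forced by the hypothesis that Step~3 fires on $t$.
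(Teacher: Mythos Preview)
Your argument is correct and follows essentially the same route as the paper: both proofs exploit that the edge $e^*$ (the paper's $(A,B)$) is saturated at four crossings by the two $1$-triangle neighbors of $t$, identify the face $\mu$ (the paper's $f'_1$) across $e_L$ as sharing the edge $xx'$ with $g_1$, and then eliminate the possibilities that $\mu$ is a triangle or that the chain extends beyond one $0$-quadrilateral. Your treatment of the terminal $1$-triangle case---showing the neighbor across $x'U$ picks up both $U$ and $A$ via the $M(G)$-edge $x'A$---is exactly the paper's observation that this neighbor ``shares edges with $t'$ and $g$'' and hence carries two original vertices.
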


\begin{proof}
Let $(A,B)$ be the edge of $G$ 
that contains $e$ and let $e'$ be an edge of $f$ that is incident to $e$.
Denote by $v$ the vertex of $f$ that is incident to both $e$ and $e'$.
Note that $(A,B)$ contains four crossing points: the endpoints of $e$
and two crossing points, one on each side of $e$ on $(A,B)$, since the neighbors of $t$ must be $1$-triangles.

Suppose that $f$ contributes charge through $e'$ to a triangle $t'$ in Step~1 or~3.
Let $g$ be the $1$-triangle that is a neighbor of $t$ and is incident to $v$,
and let $f'_1$ be the other face but $f$ that is incident to $e'$ (see Figure~\ref{fig:e_1-e_2}).
Observe that $f'_1$ and $g$ share an edge and therefore $f'_1$ cannot be a $1$-triangle or a $0$-triangle 
(the latter implies that two edges of $G$ intersect twice).
If the wedge of $t'$ contains more than one $0$-quadrilateral, then $(A,B)$ has more than four crossings.
Thus, $f'_1$ must be a $0$-quadrilateral that shares an edge with the triangle $t'$ to which $f$ contributes charge through $e'$.
If $t'$ is a $1$-triangle (refer to Figure~\ref{fig:e_1-e_2a}), then one of its neighbors shares edges with $t'$ and $g$,
which implies that this neighbor is incident to two original vertices (of $t$ and $t'$)
and is therefore not a $1$-triangle (recall that if $t'$ gets charge in Step~3, then its neighbors must be $1$-triangles).
Otherwise, if $t'$ is a $0$-triangle, then the edge $(A,B)$ has more than four crossings (see Figure~\ref{fig:e_1-e_2b}).
This implies that $f$ does not contribute charge through $e'$ in
Steps~1 and~3.
\begin{figure}[ht]
    \centering
    \subfigure[If $f$ contributes charge through $e'$ in Step~3, then $t'$ should have $1$-triangles for neighbors, however $g'$ is incident to two original vertices.]{\label{fig:e_1-e_2a}
    {\includegraphics[width=6cm]{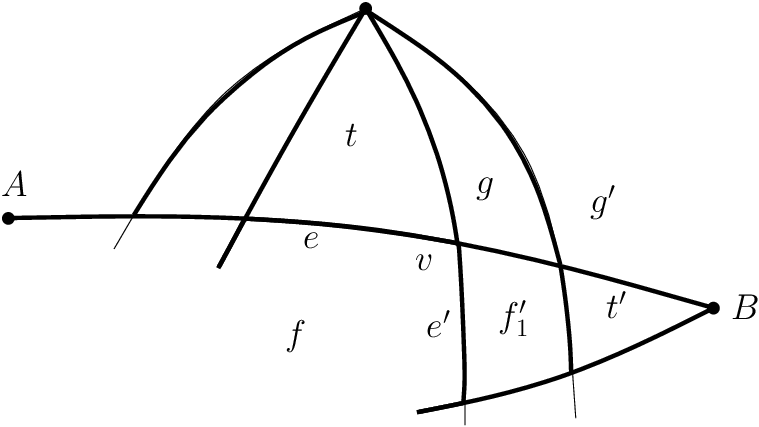}}}
    \hspace{5mm}
    \subfigure[If $f$ contributes charge through $e'$ in Step~1, then $(A,B)$ has five crossings.]{\label{fig:e_1-e_2b}
    {\includegraphics[width=6cm]{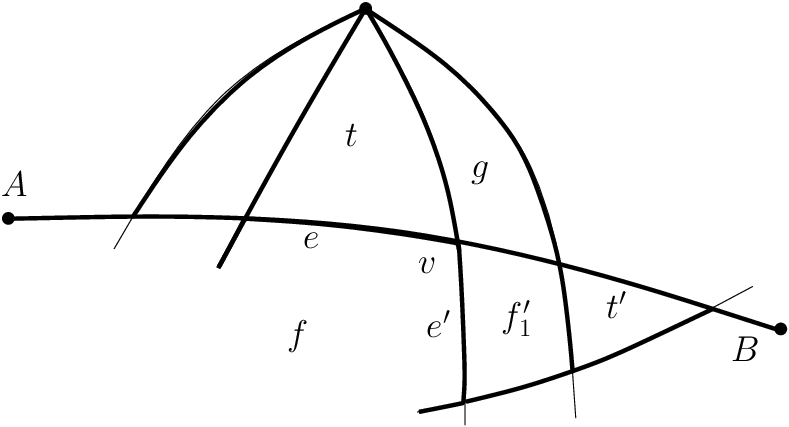}}}
	\caption{If $f$ contributes charge to $t$ through $e$ in Step~3 and $e$ is an edge of $t$,
then $f$ cannot contribute charge to a triangle $t'$ through an edge $e'$ that is incident to $e$.}
	\label{fig:e_1-e_2}
\end{figure}
\end{proof}

Let us analyze the charge of a face $f$ after the first three discharging steps.
It follows from Observation~\ref{obs:steps1and3} and the discharging steps
that $ch_3(f) \geq 2|f|/3+2|V(f)|/3-4$.
Therefore $ch_3(f)\geq 0$ if $|f| \geq 6$.

\begin{obs}\label{obs:ch_3}
Let $f$ be a face in $M(G$). Then
\begin{packed_item}
	\vspace{-3mm}
	\item if $|f| \geq 6$, then $ch_3(f) \geq 2|V(f)|/3$;
	\item if $f$ is a $0$-pentagon, then $ch_3(f) \geq -2/3$;	
	\item if $f$ is a $k$-pentagon such that $0<k<5$, then $ch_3(f) \geq k-1/3$;
	\item if $f$ is a $5$-pentagon, then $ch_3(f) = 4\frac{1}{3}$;
% 	\item if $f$ is a $0$-pentagon then $ch_2(f) \geq 1/3$;
	\item if $f$ is a $0$-quadrilateral, then  $ch_3(f)=0$;
	\item if $f$ is a $1$-quadrilateral, then $ch_3(f) \geq 0$;
	\item if $f$ is a $k$-quadrilateral such that $k>1$, then $ch_3(f) \geq k-4/3$;
	\item if $f$ is a $0$-triangle, then $ch_3(f)=0$;
	\item if $f$ is a $1$-triangle, then $ch_3(f)=-1/3$ or $ch_3(f)=0$;
	\item if $f$ is a $2$-triangle, then $ch_3(f)=1/3$; and
	\item if $f$ is a $3$-triangle, then $ch_3(f)=1$.
\end{packed_item}
\end{obs}

\begin{proof}
Let us consider, for example, the case that $f$ is a quadrilateral (the other cases are similar).
If $f$ is a $0$-quadrilateral, then it does not contribute any charge and thus $ch_3(f)=ch_0(f)=0$.
If $f$ is a $1$-quadrilateral, then it has two edges whose endpoints are crossing points, and
thus it follows from Observation~\ref{obs:steps1and3} that $f$ contributes at most $2/3$ units of charge in Steps~1 and~3.
Since $f$ contributes $1/3$ units of charge in Step~2, we have $ch_3(f) \geq 4+1-4-2\cdot\frac{1}{3}-\frac{1}{3} = 0$.
If $f$ is a $2$-quadrilateral, then it has at most one edge whose endpoints are crossing points,
and therefore, $ch_3(f) \geq 4+2-4-\frac{1}{3}-2\cdot\frac{1}{3} = 1$.
If $f$ is a $3$-quadrilateral, then it has no edge whose endpoints are crossing points,
and therefore, $ch_3(f) = 4+3-4-3\cdot\frac{1}{3} = 2$.
Finally, if $f$ is a $4$-quadrilateral, then $ch_3(f) = 4+4-4-4\cdot\frac{1}{3} = 2\frac{2}{3}$.
\end{proof}

% Note that we have not mentioned $0$-pentagons.
% Showing that $0$-pentagons end up with a non-negative charge will be the most challenging task,
% and therefore we postpone the analysis of their charge until after all the discharging steps are described.

By Observation~\ref{obs:ch_3} the only faces with a negative charge after Step~3 are $1$-triangles and $0$-pentagons.
%(we will see later in Proposition~\ref{prop:ch_1-0-pentagon} that the charge of $0$-pentagons after Step~1 is non-negative).
In the next two steps we redistribute the charges such that the charge of every $1$-triangle becomes zero.

Let $f$ be a $1$-triangle with a negative charge after Step~3. 
The missing charge of $f$ will come from either both of its neighbors or from one neighbor and the wedge-neighbor of $f$.
Next we show that if $f$ has a negative charge, then one of its neighbors has a positive charge.

\begin{prop}\label{prop:positive-neighbor}
Let $f$ be a $1$-triangle and let $g_1$ and $g_2$ be its neighbors.
If $ch_3(f)<0$, then $ch_3(g_1)>0$ or $ch_3(g_2)>0$.
\end{prop}

\begin{proof}
It follows from Observation~\ref{obs:ch_3} that if a face is incident to a vertex of $G$ and its charge after Step~3 is non-positive,
then this face is either a $1$-triangle or a $1$-quadrilateral.
Assume without loss of generality that $|g_1| \geq |g_2|$.
It follows that if $g_1$ is a $1$-triangle, then so is $g_2$, however, in this case the charge of $f$ should become zero in Step~3.
Therefore, $g_1$ must be a $1$-quadrilateral and $g_2$ is either a $1$-triangle or a $1$-quadrilateral.

Let $A$ be the vertex of $G$ that is incident to $f$, let $e$ be the edge of $f$ that is opposite to $A$
and let $(B,C)$ be the edge of $G$ that contains $e$. 
Let $e_1$ be the edge of $g_1$ that is contained in $(B,C)$ and let $e_2$ be its other edge that is not incident to $A$ (see Figure~\ref{fig:ch_3-neighbor}).
\begin{figure}[ht]
    \centering
	\includegraphics[width=6cm]{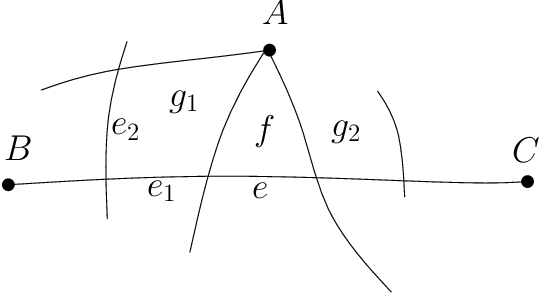}
	\caption{An illustration for the proof of Proposition~\ref{prop:positive-neighbor}: $g_2$ is either a $1$-quadrilateral or a $1$-triangle. 
			It is impossible that $g_1$ contributes charge through $e_2$ in Step~1, or through $e_2$ in Step~3 and through $e_1$ in Step~1 or ~3.}
	\label{fig:ch_3-neighbor}
\end{figure}
Observe that $ch_3(g_1) \in \{0,1/3,2/3\}$ and that if $ch_3(g_1)=0$, then $g_1$ must contribute charge through $e_2$ in either Step~1 or Step~3.
However, since $(B,C)$ already has four crossings among the endpoints of the edges of $g_1$, $f$, and $g_2$ that it contains,
it is impossible that $g_1$ contributes charge to a $0$-triangle through $e_2$ in Step~1.
Moreover, if $g_1$ contributes charge to a $1$-triangle through $e_2$ in Step~3, 
then $e_2$ must be an edge of this triangle, for otherwise $(B,C)$ would have more than four crossings.
But then, it follows from Proposition~\ref{prop:e_1-e_2} that $g_1$ does not contribute charge through $e_1$ in Step~1 or~3.
Therefore, $ch_3(g_1) \geq 1/3$.
\end{proof}

\medskip
\noindent\textbf{Step 4: Charging $1$-triangles with a positive neighbor.}
Let $t$ be a $1$-triangle such that $ch_3(t)<0$, let $g$ be a neighbor of $t$ such that $ch_3(g)>0$,
and let $g'$ be the other neighbor of $t$.
Then $g$ contributes $1/6$ units of charge to $t$ through the edge of $M(G)$ that they share if:
(a)~$g$ is not a $1$-quadrilateral or $ch_3(g) > 1/3$; or 
(b)~$g$ is a $1$-quadrilateral, $ch_3(g)=1/3$, and $g'$ is either a $1$-triangle or a $1$-quadrilateral with $ch_3(g')=1/3$. 
See Figure~\ref{fig:steps45} for an illustration.$\hfill\leftrightsquigarrow$

\begin{figure}[ht]
    \centering
    \subfigure[Steps~4 and 5: In Step~4 the $1$-triangle $t$ receives $1/6$ units of charge from its neighbor $g$.
				In Step~5 $t$ receives $1/6$ units of charge from its wedge-neighbor $f$.]{\label{fig:steps45}
    {\includegraphics[width=5cm]{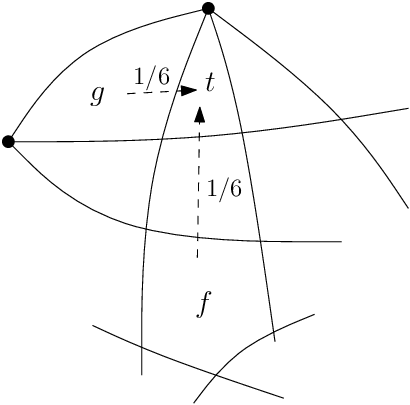}}}
        \hspace{5mm}
    \subfigure[Step~6: The $0$-pentagon $f$ sends $2/3$ units of charge in Step~1, $1/3$ unit of charge in Step~3,
				and $1/6$ units of charge in Step~5, thus $ch_5(f)<0$. Its vertex-neighbor $f'$ has $ch_5(f') = 1/6 \geq 0$.
				In Step~6 $f'$ sends $1/6$ units of charge to $f$ (its only vertex-neighbor that is a $0$-pentagon with a negative charge).]{\label{fig:step6}
    {\includegraphics[width=6cm]{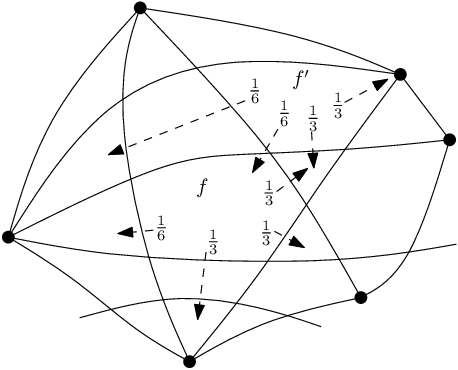}}}
	\caption{The last three discharging steps.}
	\label{fig:last-steps}
\end{figure}

\medskip

Considering Observation~\ref{obs:ch_3} it follows that if $f$ is a neighbor of a $1$-triangle $t$ and $f$ is not a $1$-triangle or a $1$-quadrilateral with
$ch_3(f)=1/3$, then $f$ contributes $1/6$ units of charge to $t$ in Step~4.
By Proposition~\ref{prop:positive-neighbor} it also follows that for every $1$-triangle $t$ we have $ch_4(t) \geq -1/6$.

% Note that it might happen that $g=g'$.
% In this case $|g| \geq 6$ and $g$ contributes $1/6$ units of charge
% to $f$ through each of the two edges that it shares with $f$.

\begin{prop}\label{prop:ch_4}
There is no face $f$ such that $ch_3(f) \geq 0$ and $ch_4(f) < 0$.
\end{prop}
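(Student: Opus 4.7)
The plan is to verify by case analysis on $g$ that if $ch_3(g) \geq 0$, then Step~4 does not drive $g$'s charge negative. First I would note the central structural fact: if $g$ contributes $1/6$ units of charge to some $1$-triangle $t'$ in Step~4, then $g$ is a neighbor of $t'$ in the sense defined just before Step~3, so $g$ must be incident to the unique original vertex $A$ of $t'$ and must share with $t'$ an edge having $A$ as one endpoint and a crossing point as the other. In particular $|V(g)| \geq 1$; moreover, distinct recipient $1$-triangles correspond to distinct edges of $g$, and at each vertex $A \in V(g)$ the face $g$ has exactly two incident edges in $M(G)$, so the total number of Step~4 contributions from $g$ is at most $2|V(g)|$ and $g$ gives away at most $|V(g)|/3$ units of charge.

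I would then case-split on the type of $g$. If $|g| \geq 6$, Observation~\ref{obs:ch_2} gives $ch_3(g) \geq ch_2(g) \geq 2|V(g)|/3 \geq |V(g)|/3$, and we are done. For pentagons with $|V(g)|\geq 1$, $k$-quadrilaterals with $k \geq 2$, and $k$-triangles with $k \in \{2,3\}$, a short direct calculation using Observation~\ref{obs:ch_2}, together with the fact that Steps~1 and~3 give charge only through crossing-crossing edges (while Step~4 uses only original-to-crossing edges), suffices. For instance, a $3$-triangle has no original-to-crossing edge and so contributes nothing in Step~4, while a $2$-triangle cannot have been a wedge-neighbor in Step~3 (it has no crossing-crossing edge), so $ch_3(g)=1/3$ and $g$ gives out exactly $2\cdot 1/6=1/3$. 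Faces with $|V(g)|=0$ cannot be neighbors of any $1$-triangle, and faces with $ch_3(g)\leq 0$ are excluded by the preamble of Step~4.

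The hard part will be when $g$ is a $1$-quadrilateral, because its charge is tight, and this is precisely where the side-conditions~(2) and~(3) of Step~4 come in. A $1$-quadrilateral has only two edges incident to its single original vertex, so it contributes to at most two $1$-triangles, giving away at most $2\cdot 1/6=1/3$ in total. Under condition~(2) we have $ch_3(g)\geq 2/3$, so $ch_4(g)\geq 1/3>0$; under condition~(3) we have $ch_3(g)=1/3$, so $ch_4(g)\geq 0$. The remaining scenarios---$ch_3(g)\leq 0$, or $ch_3(g)=1/3$ paired with an ineligible other neighbor $g'$---are precisely those in which Step~4 instructs $g$ not to contribute at all, hence $ch_4(g)=ch_3(g)\geq 0$. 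Combining all cases completes the proof.
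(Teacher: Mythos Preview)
Your approach matches the paper's: a case split on the face type, driven by the observation that Step~4 only sends charge through edges joining an original vertex to a crossing point, which are disjoint from the crossing--crossing edges used in Steps~1 and~3. Your treatment of the $1$-quadrilateral via the side-conditions (2) and (3) of Step~4 is exactly what the paper does.

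There is one slip to fix. In the $|g|\geq 6$ case you write $ch_3(g)\geq ch_2(g)$, but this inequality goes the wrong way: Step~3 can only \emph{remove} charge from $g$ (when $g$ is the wedge-neighbor of some $1$-triangle), so in general $ch_3(g)\leq ch_2(g)$, and Observation~\ref{obs:ch_2} alone does not give you a lower bound on $ch_3(g)$. The repair is precisely your own edge-type observation: through each crossing--crossing edge of $g$, Steps~1 and~3 together take at most $1/3$; through each original--crossing edge, Step~4 takes at most $1/6$; and these edge sets are disjoint. Writing $a,b,c$ for the numbers of original--original, original--crossing, and crossing--crossing edges of $g$ (so $a+b+c=|g|$ and $2a+b=2|V(g)|$), one gets
\[
ch_4(g)\ \geq\ |g|+|V(g)|-4-\tfrac{|V(g)|}{3}-\tfrac{c}{3}-\tfrac{b}{6}\ =\ \tfrac{2}{3}|g|+|V(g)|-4,
\]
which is nonnegative for $|g|\geq 6$ and also disposes of $k$-pentagons ($k\geq 1$), $k$-quadrilaterals ($k\geq 2$), and $2$- and $3$-triangles in one stroke. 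The paper compresses all of this into a single sentence (``if $|V(f)|>1$ or $|f|>4$, then it is easy to see that the charge of $f$ remains positive''); your write-up is more explicit but needs this correction.
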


\begin{proof}
Let $f$ be a face.
Note that $f$ may contribute charge in Step~4 through one of its edges, only if the
endpoints of this edge consist of one original vertex of $G$ and one crossing point.
Therefore $f$ cannot contribute charge through the same edge in Steps 1, 3, or 4,
and we only have to consider faces containing original vertices of $G$.

If $f$ is a $1$-triangle, then $ch_3(f) \leq 0$ and so it cannot contribute charge in Step~4.
If $f$ is a $2$-triangle, then $ch_3(f)=1/3$ and $f$ is the neighbor of at most two $1$-triangles and so $ch_4(f)\geq 0$.
If $f$ is a $3$-triangle then $f$ is not a neighbor of any $1$-triangle and so $ch_4(f)=ch_3(f)=1$.
If $f$ is a $1$-quadrilateral then it contributes charge (to at most two $1$-triangles) only if $ch_3(f) \geq 1/3$ and therefore $ch_4(f)\geq 0$.
If $|V(f)|>1$ or $|f|>4$, then it is easy to verify that the charge of $f$ remains positive.
\end{proof}

% \begin{prop}\label{prop:ch_4-1-triangle}
% If $f$ is a $1$-triangle, then $ch_4(f) \geq -1/6$.
%%Furthermore, if $ch_4(f) = -1/6$ then exactly one of the neighbors of $f$ is a $1$-triangle.
% \end{prop}

% \begin{proof}
% Let $f$ be a $1$-triangle and let $g_1$ and $g_2$ be its neighbors.
% If $ch_4(f) < -1/6$ it means that $f$ did not receive any charge from neither $g_1$ nor $g_2$ in Step~4.
% The only faces that contain an original vertex and may have a non-positive charge after Step~3 are $1$-triangles and $1$-quadrilaterals.
% Therefore, each of $g_1$ and $g_2$ is either a $1$-triangle or a $1$-quadrilateral.
% If both of them are $1$-triangles, then $f$ is charged in Step~3 and $ch_4(f)=ch_3(f)=0$.

% Suppose without loss of generality that $|g_1| \geq |g_2|$, and therefore $g_1$ is a $1$-quadrilateral.
% Considering Step~4, observe that $f$ receives charge from $g_1$ or $g_2$ 
% unless $ch_3(g_1)=0$ or $ch_3(g_1)=1/3$ and $g_2$ is a $1$-quadrilateral such that $ch_3(g_2)=0$.
% In the first case $|V(g_2)| \geq 2$ by Proposition~\ref{prop:ch_3-1-quad} and so $ch_3(g_2) > 0$.
% The second case is actually symmetric to the first case (switching the names of $g_1$ and $g_2$).
% Therefore, in each case at least one neighbor of $f$ contributes charge to $f$ in Step~4 and thus $ch_4(f) \geq -1/6$.
% \end{proof}

\begin{prop}\label{prop:ch_4-1-quad}
If $f$ is a $1$-quadrilateral and $ch_3(f)=1/3$, then $ch_4(f) \geq 1/6$. % contributes charge to at most one $1$-triangle in Step~4.
\end{prop}

\begin{proof}
Suppose that $f$ is a $1$-quadrilateral such that $V(f)=\{A\}$, $ch_3(f)=1/3$ and $f$ contributes
charge to two $1$-triangles $t_1$ and $t_2$ in Step~4.
Let $g_1$ and $g_2$ be the other neighbors of $t_1$ and $t_2$, respectively 
(it is impossible that $g_1=g_2$, for otherwise $\deg(A)<7 \leq \delta(G)$ or $A$ is a cut vertex).
Note that according to Step~4(b),
each of $g_1$ and $g_2$ must be either a $1$-triangle or a $1$-quadrilateral whose charge is $1/3$ after Step~3.
Observe also that it is impossible that $ch_2(f)<2/3$, that is, that $f$ contributes charge
to at least one $0$-triangle in Step~1.
Indeed, this would imply that an edge of $G$ that contains an edge of $f$ that is not incident to $A$
already has four crossings among the vertices of $f$
and the $0$- and $1$-triangles to which $f$ contributes charge.
It then follows that $|V(g_1)| \geq 2$ or $|V(g_2)| \geq 2$ (see Figure~\ref{fig:ch_4-1-quad-1} for an illustration).
\begin{figure}[ht]
    \centering
    \subfigure[If $ch_2(f)<2/3$ then $|V(g_1)| \geq 2$ (as in this figure) or $|V(g_2)| \geq 2$.]{\label{fig:ch_4-1-quad-1}
    {\includegraphics[width=4cm]{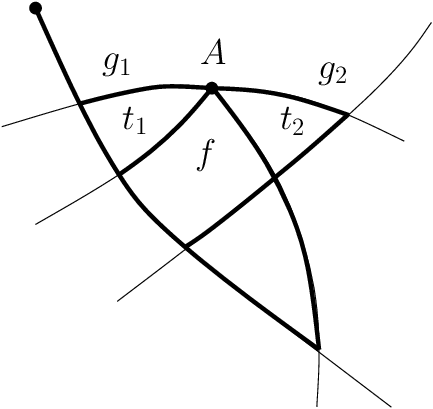}}}
	\hspace{10mm}
    \subfigure[If $ch_2(f)=2/3$ and $f$ contributes charge to $t'$ in Step~3, then
				both neighbors of $t'$ should be $1$-triangles. This implies that there are two parallel edges whose endpoints are $A$ and $B$.]{\label{fig:ch_4-1-quad-2}
    {\includegraphics[width=4cm]{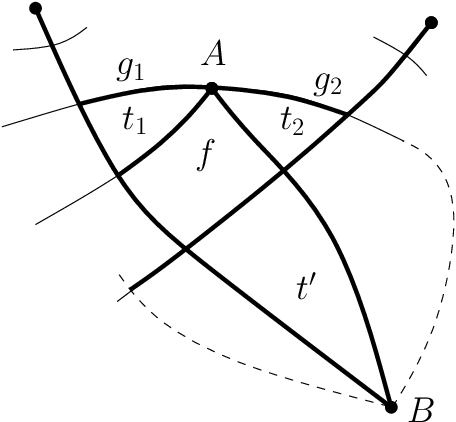}}}
	\caption{Illustrations for the proof of Proposition~\ref{prop:ch_4-1-quad}: There is no $1$-quadrilateral $f$ 
             such that $ch_3(f)=1/3$ and $f$ contributes charge to two $1$-triangles $t_1$ and $t_2$ in Step~4.}
	\label{fig:ch_4-1-quad}
\end{figure}

Therefore, assume that $ch_2(f)=2/3$ and denote by $t'$ the $1$-triangle to which $f$ has contributed charge in Step~3.
Suppose without loss of generality that $f$ contributes charge to $t'$ through the edge that is opposite to the edge
through which $f$ contributes charge to $t_1$ (see Figure~\ref{fig:ch_4-1-quad-2}).
Note that $t'$ must share an edge (of $M(G)$) with $f$, for otherwise, as before, $|V(g_1)| \geq 2$.
Let $B$ be the vertex of $G$ that is incident to $t'$.
Since $t'$ receives charge in Step~3, both of its neighbors are $1$-triangles.
However, this implies that there are two parallel edges whose endpoints are $A$ and $B$.
%(and also implies that the edge of $G$ that contains the edge opposite to $B$ in $t'$ has five crossings).
\end{proof}

\noindent\textbf{Step 5: Finish charging $1$-triangles.}
Let $t$ be a $1$-triangle, let $f$ be the wedge-neighbor of $t$ 
and let $e$ be the edge of $f$ at which $t$ is a wedge-neighbor of $f$.
If $ch_4(t)<0$ then $f$ contributes $1/6$ units of charge to $t$ through $e$ 
(see Figure~\ref{fig:steps45} for an illustration).$\hfill\leftrightsquigarrow$

\medskip

It follows from Observation~\ref{obs:one-wedge-neighbor} and the definition of Steps 1--5
that a face contributes charge at most once through each of its edges.

\begin{obs}\label{obs:once}
Let $f$ be a face and let $e$ be an edge of $f$.
Then $f$ contributes charge through $e$ at most once during Steps 1--5.
\end{obs}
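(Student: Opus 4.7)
The plan is to classify contributions by the type of endpoints of $e$ and to show that no two steps can push charge through the same $e$. Step~2 sends no charge along edges, so it is irrelevant. By Observation~\ref{obs:step1}, by the remark following the description of Step~3, and by the analogous wedge structure used in Step~5, every contribution in Steps~1, 3, or~5 passes through an edge whose two endpoints are crossing points. The argument used in the proof of Proposition~\ref{prop:ch_4} shows that every Step~4 contribution passes through an edge with exactly one endpoint an original vertex of $G$. Consequently, Step~4 cannot collide with Steps~1, 3, or~5 through the same edge, and within Step~4 alone the unique face across $e$ is the only possible receiver, so no collision occurs there either.

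For an edge $e$ of $f$ whose endpoints are both crossing points I would use the following determinism. The face across $e$ from $f$ is uniquely determined; call it $h_1$. If $h_1$ is a $0$-quadrilateral, then the edge of $h_1$ opposite to $e$ determines the next face $h_2$, and so on, until the chain reaches a non-$0$-quadrilateral face $h^*$. A Step~1 contribution through $e$ forces $h^*$ to be the $0$-triangle receiving the charge; a Step~3 or Step~5 contribution through $e$ forces $h^*$ to be the $1$-triangle whose wedge is exactly $\{h_1,\ldots,h^*\}$. Since $h^*$ is unique and cannot be simultaneously a $0$-triangle and a $1$-triangle, a Step~1 contribution through $e$ is incompatible with a Step~3 or Step~5 contribution through $e$; and within each of Steps~1, 3, and~5 separately the uniqueness of $h^*$ fixes the receiving triangle, so at most one contribution from $f$ through $e$ occurs inside any one of these steps.

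The last collision to rule out is when Steps~3 and~5 both try to route from $f$ through $e$ to the same $1$-triangle $t = h^*$. For this I would invoke the bookkeeping on the charge of $t$: if $t$ received $1/3$ in Step~3 then $ch_3(t)=0$, which makes $t$ ineligible in Step~4 (which requires $ch_3(t')<0$), keeps $ch_4(t)=0$, and therefore makes $t$ ineligible in Step~5 as well; conversely, if $t$ does not receive in Step~3, then $f$ does not contribute through $e$ in Step~3 at all. Hence at most one of Steps~3 and~5 produces a contribution from $f$ to $t$ through $e$. Putting the cases together yields the observation. I expect the only subtle point to be this last Steps~3-versus-5 disambiguation via the $ch_3(t)$ trace; everything else is a clean case split driven by the endpoint types of $e$ and by the uniqueness of the opposite-edge chain.
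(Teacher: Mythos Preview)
Your argument is correct and is essentially the intended one. The paper states this as a bare observation without proof, relying on remarks already made: the note preceding Step~4 that Steps~1 and~3 contribute only through edges with two crossing endpoints and that no edge is used in both, and the note inside the proof of Proposition~\ref{prop:ch_4} that Step~4 uses only edges with exactly one original endpoint. Your write-up simply makes these ingredients explicit, adds the (equally routine) observation that Step~5, like Steps~1 and~3, routes through the opposite-edge chain of $0$-quadrilaterals to a uniquely determined terminal face, and handles the one genuinely new pairing---Step~3 versus Step~5 on the same $1$-triangle---by tracking $ch_3(t)$ and $ch_4(t)$; this last point is exactly how the eligibility conditions in Steps~4 and~5 are meant to be read.
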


\begin{prop}\label{prop:ch_5}
For every face $f$ that is not a $0$-pentagon we have $ch_5(f) \geq 0$.
\end{prop}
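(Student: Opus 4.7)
Plan:

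I plan to prove Proposition~\ref{prop:ch_5} by a case analysis on the face type of $f$, starting from $ch_2(f)$ in Observation~\ref{obs:ch_2} and controlling how much charge $f$ can still lose in Steps~3--5. The central bookkeeping tool is Observation~\ref{obs:once}: every edge of $f$ participates in at most one charge transfer throughout Steps~1--5. Since Steps~1, 3, and 5 only donate through all-crossing edges (with maximum charge $1/3$, $1/3$, and $1/6$, respectively) and Step~4 only donates through mixed (original-to-crossing) edges (charge $1/6$), one gets the blanket lower bound $ch_5(f) \geq 2|f|/3 + 2|V(f)|/3 - 4$, which is non-negative whenever $|f|+|V(f)| \geq 6$. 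This dispatches all hexagons or larger, all pentagons with $|V(f)| \geq 1$, all quadrilaterals with $|V(f)| \geq 2$, and all triangles with $|V(f)| \geq 3$.

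The remaining cases are $0$-triangles, $1$-triangles, $2$-triangles, $0$-quadrilaterals, and $1$-quadrilaterals. A $0$-quadrilateral is a pass-through in Step~1 and is by definition excluded from being a wedge-neighbor, so $ch_5(f) = 0$. A $2$-triangle has no all-crossing edge, so only Step~4 is active and it takes at most $2 \cdot 1/6 = 1/3 = ch_2(f)$. For a $0$-triangle, I would show it cannot serve as the wedge-neighbor of any $1$-triangle (this would force two edges of $G$ sharing an endpoint to also cross at another point, contradicting the simplicity of $G$), so the only activity is Step~1, where contributions along chains to other $0$-triangles are matched by the symmetric reverse contributions and $ch_5(f) = 0$.

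For a $1$-triangle $f$, Proposition~\ref{prop:ch_4-1-triangle} gives $ch_4(f) \geq -1/6$, and Step~5 transfers $1/6$ from $f$'s wedge-neighbor to $f$ whenever $ch_4(f) < 0$. The subtle point, which I expect to be the main obstacle of the whole argument, is that $f$ itself could be the wedge-neighbor of another $1$-triangle and thereby donate $1/6$ in Step~5, making $ch_5(f)$ negative. I would rule this out by a direct analysis of the supporting edges of $G$ along the wedge chain: if $f$ with vertex $A$ is the wedge-neighbor of a $1$-triangle with vertex $A'$, then the two edges of $G$ supporting the chain must connect $A'$ to $A$, producing either a loop (if $A=A'$) or a pair of parallel edges (if $A \neq A'$), both contradicting simplicity. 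The same structural argument also rules out $1$-triangles as donors in Steps~1 and~3, justifying the equality in Observation~\ref{obs:ch_2}.

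For a $1$-quadrilateral $f$ I would split on $ch_3(f) \in \{0, 1/3, 2/3\}$. When $ch_3(f) = 2/3$, both all-crossing edges are still available, so Step~4 costs at most $2 \cdot 1/6 = 1/3$ and Step~5 at most $2 \cdot 1/6 = 1/3$, together at most $ch_3(f)$. When $ch_3(f) = 1/3$, Proposition~\ref{prop:ch_4-1-quad} caps Step~4 contributions at a single $1$-triangle and at most one all-crossing edge remains available for a Step~5 donation, so the total loss is at most $1/3$. When $ch_3(f) = 0$, the third clause of Step~4 forbids $f$ from contributing in Step~4 (as $f$ is a $1$-quadrilateral with $ch_3(f) < 1/3$), and both all-crossing edges have already been consumed in Steps~1 or~3, so Observation~\ref{obs:once} prevents any further loss. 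In every subcase one concludes $ch_5(f) \geq 0$.
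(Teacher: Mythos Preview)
Your proposal is correct and follows essentially the same approach as the paper's proof: a case analysis on face type, leaning on Observation~\ref{obs:once} for the generic bound and treating $1$-triangles and $1$-quadrilaterals as the delicate cases. Your plan is considerably more explicit than the paper, which dispatches $1$-triangles in one line via Proposition~\ref{prop:ch_4-1-triangle} and Step~5, handles $1$-quadrilaterals exactly as you do (splitting on $ch_3(f)\in\{0,1/3,2/3\}$ and invoking Proposition~\ref{prop:ch_4-1-quad} for the middle case), and then waves away all other faces with ``it is not hard to see.'' In particular, your structural argument that a $0$- or $1$-triangle can never be a wedge-neighbor (because the two supporting edges of the wedge share the apex vertex $A'$, forcing either a loop or parallel edges) is precisely what justifies the equalities in Observation~\ref{obs:ch_2} and the paper's tacit assumption that triangles never donate in Steps~3 or~5; the paper never spells this out.
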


\begin{proof}
It follows from Observation~\ref{obs:ch_3}, Proposition~\ref{prop:positive-neighbor}, and Steps 3--5 that the charge
of every $1$-triangle is zero after the fifth discharging step.
Suppose that $f$ is a $1$-quadrilateral. Then $ch_3(f)$ is either $0$, $1/3$, or $2/3$.
If $ch_3(f)=0$ then it follows from Observation~\ref{obs:once} that $f$ does not contribute charge in Steps~4 and~5, and therefore $ch_5(f)=0$.
If $ch_3(f)=1/3$ then it follows from Observation~\ref{obs:once} and Proposition~\ref{prop:ch_4-1-quad} that $ch_4(f) \geq 1/6$ and so $ch_5(f) \geq 0$.
If $ch_3(f)=2/3$ then clearly $ch_5(f) \geq 0$.
It is not hard to see, recalling Observations~\ref{obs:ch_3} and~\ref{obs:once}, 
that the charge of every other face but a $0$-pentagon cannot be negative.
\end{proof}

Let $x$ be a crossing point in $G$ and let $f_1$, $f_2$, $f_3$ and $f_4$ be the four faces that are incident to $x$,
listed in their clockwise order around $x$.
Note that these faces are distinct, since $M(G)$ is $2$-connected.
We say that $f_1$ and $f_3$ (resp., $f_2$ and $f_4$) are \emph{vertex-neighbors} at $x$.
For a face $f$ such that $ch_5(f) \geq 0$, we denote by $\mathcal{P}(f)$ the set of $0$-pentagons $f'$ such that $ch_5(f') < 0$ and $f'$ is a vertex-neighbor of $f$.
We also denote by $\mathcal{P}'(f)$ the set of vertices of $f$ at which it is a vertex-neighbor of $0$-pentagons with a negative charge after the fifth discharging step.

\medskip
\noindent\textbf{Step 6: Charging $0$-pentagons.}
For every face $f$ such that $ch_5(f) \geq 0$,
if $\mathcal{P}(f) \neq \emptyset$, then in the sixth discharging step $f$ sends $ch_5(f)/|\mathcal{P}(f)|$ units of charge 
to every $0$-pentagon in $\mathcal{P}(f)$ through the vertex by which they are vertex-neighbors.
See Figure~\ref{fig:step6} for an illustration. $\hfill\leftrightsquigarrow$

% \paragraph{Remark:} If a $0$-pentagon $f'$ with $ch_5(f') < 0$ is a vertex-neighbor 
% of a face $f$ with $ch_5(f)>0$ at $k$ vertices, then $f'$ gets $k \cdot ch_5(f)/|B(f)|$ units of charge from $f$
% (that is, $ch_5(f)/|B(f)|$ units of charge through each vertex by which they are vertex-neighbors).
%\medskip
% \begin{prop}\label{prop:step6}
% Let $f$ be a face, let $v_1v_2$ be an edge of $f$, and let $f_i$
% be the face of $M(G)$ such that $f \cap f_i = v_i$, for $i=1,2$.
% If $\{f_1,f_2\} \subseteq B(f)$ then $f$ does not contribute charge through $v_1v_2$.
% \end{prop}

\medskip
We summarize all the discharging steps in Table~1. %\ref{tab:steps}.

\begin{table}[h] \label{tab:steps}
\begin{center}
\begin{tabularx}{\textwidth}{|l|l|X|} \hline
    Step & Charging & Details \\ \hline\hline
    1 & $0$-triangles & Each $0$-triangle gets $1/3$ units of charge from each of its three wedge-neighbors. \\ \hline
    2 & Vertices of $G$ & Each vertex of $G$ gets $1/3$ units of charge from each face of $M(G)$ it is incident to.  \\ \hline
    3 & $1$-triangles & Each $1$-triangle whose neighbors are $1$-triangles gets $1/3$ units of charge from its wedge-neighbor.  \\ \hline
    4 & $1$-triangles & A face $g$ with a positive charge that is a neighbor of a $1$-triangle $t$ with a negative charge sends $1/6$ units of charge to $t$ if:
						(a)~$g$ is not a $1$-quadrilateral or $ch_3(g) > 1/3$; or 
						(b)~$g$ is a $1$-quadrilateral with $ch_3(g)=1/3$ and the other neighbor of $t$ is either a $1$-triangle or a $1$-quadrilateral whose charge is $1/3$. \\ \hline
    5 & $1$-triangles & Each $1$-triangle with a negative charge gets $1/6$ units of charge from its wedge-neighbor.  \\ \hline
    6 & $0$-pentagons & Each face with a non-negative charge distributes it uniformly to the $0$-pentagons with a negative charge among its vertex-neighbors.  \\ \hline
\end{tabularx}
  \caption{Summary of the discharging steps.}
\end{center}
\end{table}

\begin{prop}\label{prop:step6}
Let $f$ be a face such that $|f| \geq 4$, $|V(f)|>0$ and $|\mathcal{P}(f)| > 0$.
If $|V(f)|=1$, then each $0$-pentagon in $\mathcal{P}(f)$ receives at least $\frac{2|f|/3-3}{|\mathcal{P}(f)|}$ units of charge from $f$ in Step~6.
If $|V(f)|\geq 2$, then each $0$-pentagon in $\mathcal{P}(f)$ receives at least $\frac{2|f|/3-2}{|\mathcal{P}(f)|} \geq 1/3$ units of charge from $f$ in Step~6.
\end{prop}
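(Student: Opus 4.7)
The plan is to prove the single inequality $ch_5(f) \geq 2|f|/3 + |V(f)| - 4$, from which both claims follow: in Step~6 the charge $ch_5(f)$ is divided evenly among the $|B(f)|$ pentagons in $B(f)$, so each receives $ch_5(f)/|B(f)|$ units, and substituting $|V(f)|=1$ or $|V(f)|\geq 2$ yields the stated lower bounds of $(2|f|/3-3)/|B(f)|$ and $(2|f|/3-2)/|B(f)|$, respectively.

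To establish the inequality I would carefully account for every charge unit that leaves $f$ during Steps~1--5. By Observation~\ref{obs:once}, each edge of $f$ is used to transfer charge out of $f$ at most once, and the per-edge maximum depends on the edge type: an edge whose two endpoints are both crossing points contributes at most $1/3$ (via Step~1, Step~3, or Step~5); an edge with exactly one original-vertex endpoint contributes at most $1/6$ (via Step~4 only); an edge whose two endpoints are both original vertices contributes nothing. Since the boundary of $f$ is a simple cycle (Observation~\ref{obs:simple-cycle}) and each original vertex on it is incident to exactly two boundary edges, denoting the three edge-counts by $a$, $b$, $c$ we obtain $a+b+c=|f|$ and $b+2c=2|V(f)|$. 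Hence the total outgoing charge in Steps~1, 3, 4, 5 is at most $a/3 + b/6 = |f|/3 - (b/6 + c/3)$, and the constraint $b=2|V(f)|-2c$ makes $b/6+c/3 = |V(f)|/3$ identically; together with the $|V(f)|/3$ units lost to original vertices in Step~2, the total outgoing charge is at most $|f|/3$. Since $ch_0(f) = |f|+|V(f)|-4$, this gives $ch_5(f) \geq 2|f|/3 + |V(f)| - 4$.

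The main subtlety to verify is that the per-edge accounting is airtight across all four discharging steps: Observation~\ref{obs:once} controls the contributions from Steps~1, 3, 5 collectively on each edge whose endpoints are both crossings, while Step~4 acts only on a disjoint family of edges (those with exactly one original-vertex endpoint), so nothing is double-counted between the two groups. Once this is in place, the identity $b/6 + c/3 = |V(f)|/3$ arising from the cycle constraint produces a bound independent of how the original vertices are distributed on the boundary of $f$, and the proposition falls out cleanly.
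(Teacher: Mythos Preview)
Your proof is correct and follows the same underlying accounting as the paper: bound the charge lost through each boundary edge by its type ($1/3$ if both endpoints are crossings, $1/6$ if exactly one is an original vertex, $0$ otherwise), add the $|V(f)|/3$ lost in Step~2, and subtract from $ch_0(f)=|f|+|V(f)|-4$. Your use of the cycle constraint $b+2c=2|V(f)|$ to obtain the single bound $ch_5(f)\geq 2|f|/3+|V(f)|-4$ is a clean consolidation of the paper's case split on whether the original vertices are consecutive on the boundary, but the argument is otherwise the same.
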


\begin{proof}
$f$ is not a $0$-pentagon since $|V(f)|>0$, and therefore by Proposition~\ref{prop:ch_5} it has
a non-negative charge after Step~5 which is distributed among the $0$-pentagons in $\mathcal{P}(f)$.
Since by replacing (for the sake of charge calculation) an original
vertex with a crossing point decreases the charge of a face,
we may assume that $f$ has at most two original vertices.

Suppose that $V(f) = \{A\}$.
Then $f$ may contribute at most $1/6$ units of charge through each of its two edges that
are incident to $A$, and at most $1/3$ units of charge through every other edge.
Therefore $ch_5(f) \geq |f|+1-4 - 1/3 - 2/6 - (|f|-2)/3 = 2|f|/3-3$.

Suppose now that $V(f) = \{A,B\}$.
If $A$ and $B$ are consecutive on the boundary of $f$,
then $f$ does not contribute charge through the edge $(A,B)$,
contributes at most $1/6$ units of charge through two edges,
and at most $1/3$ units of charge through every other edge.
Therefore, $ch_5(f) \geq |f|+2-4 - 2/3 -2/6 - (|f|-3)/3 = 2|f|/3-2$. 

If $A$ and $B$ are not consecutive on the boundary of $f$,
then $f$ contributes at most $1/6$ units of charge through four edges,
and at most $1/3$ units of charge through every other edge.
Therefore, $ch_5(f) \geq |f|+2-4 - 2/3 -4/6 - (|f|-4)/3 = 2|f|/3-2$.
Note that $A,B \notin \mathcal{P}'(f)$, thus we have
$\frac{2|f|/3-2}{|\mathcal{P}(f)|} \geq \frac{2|f|/3-2}{|f|-2} \geq 1/3$, for $|f| \geq 4$.
\end{proof}

% \begin{cor}\label{cor:step6-1}
% If $f$ is a face such that $|V(f)| = 1$, $|f| \geq 5$, and $f$ contributes charge in Step~6 to at most $|f|-4$ pentagons,
% then each $0$-pentagon in $B(f)$ receives at least $1/3$ units of charge from $f$.
% \end{cor}

%\begin{cor}\label{cor:step6-2}
%Let $f$ be a $0$-pentagon with $ch_5(f)<0$ and let $f'$ be a %vertex-neighbor of $f$ such that $|V(f')| \geq 2$ and $|f'| \geq 4$.
%Then $f$ receives at least $1/3$ units of charge from $f'$ in Step~6.
%\end{cor}

% \begin{cor}\label{cor:step6}
% Let $f$ be a face that contributes charge in Step~6. Then:
% \begin{packed_enum}
% \item if $|V(f)| \geq 1$ and $|f| \geq 5$, then $f$ sends at least $1/6$ units of charge to each $0$-pentagon in $B(f)$;
% \item if $|V(f)| \geq 1$ and $|f| \geq 6$, then $f$ sends at least $1/3$ units of charge to each $0$-pentagon in $B(f)$; and
% \item if $|V(f)| \geq 2$ and $|f| \geq 4$, then $f$ sends at least $1/3$ units of charge to each $0$-pentagon in $B(f)$.
% \end{packed_enum}
% \end{cor}

It follows from Proposition~\ref{prop:ch_5} and Step~6 that it remains to show
that after the last discharging step the charge of every $0$-pentagon is non-negative.
A $0$-pentagon can contribute either $1/3$ or $1/6$ units of charge (to a triangle) at most once through each of its edges.
We analyze the charge of $0$-pentagons according to their charge after Step~1 in Lemmas~\ref{lem:ch_6-0-pentagon-negative},
\ref{lem:ch_6-0-pentagon-0}, \ref{lem:ch_6-0-pentagon-1/3}, \ref{lem:ch_6-0-pentagon-2/3} and \ref{lem:ch_6-0-pentagon-1}.
In all cases we conclude that the charge of the $0$-pentagons after Step~6 is non-negative.
Recall that since $M(G)$ is $2$-connected, 
the boundary of every $0$-pentagon is a simple $5$-cycle.

Before proving the above-mentioned lemmas, we introduce some useful notations and prove a few auxiliary propositions. 
Let $f$ be a $0$-pentagon.
The vertices of $f$ are denoted by $v_0,\ldots,v_4$, listed in their clockwise cyclic order.
The edges of $f$ are $e_i=v_{i-1}v_i$, for $i=0,\ldots,4$ (addition and subtraction are modulo $5$).
For every edge $e_i=v_{i-1}v_i$ we denote by $(A_i,B_i)$ the edge of $G$ that contains $e_i$,
such that $v_{i-1}$ is between $A_i$ and $v_{i}$ on $(A_i,B_i)$.
Denote by $t_i$ the triangle to which $f$ sends charge through $e_i$, if such a triangle exists.
Note that if $t_i$ is a $1$-triangle then one of its vertices is $A_{i+1}=B_{i-1}$.
Its other vertices will be denoted by $x_i$ and $y_i$ such that $x_i$ is contained in $(A_{i-1},B_{i-1})$
and $y_i$ is contained in $(A_{i+1},B_{i+1})$.
If $t_i$ receives charge from $f$ in Step~3, then its neighbors are $1$-triangles.
In this case we denote by $x'_i$ (resp., $y'_i$) the third vertex of the neighbor whose other two vertices are $A_{i+1}$ and $x_i$ (resp., $y_i$).
If $t_i$ is a $0$-triangle, then $w_i$ denotes its vertex which is the crossing point of $(A_{i-1},B_{i-1})$
and $(A_{i+1},B_{i+1})$, and, as before, $x_i$ and $y_i$ denote its other vertices.
Finally, we denote by $f_i$ the vertex-neighbor of $f$ at $v_i$.
See Figure~\ref{fig:0-pentagon-notations} for an example of these notations. 
\begin{figure}[ht]
    \centering
    \includegraphics[width=6cm]{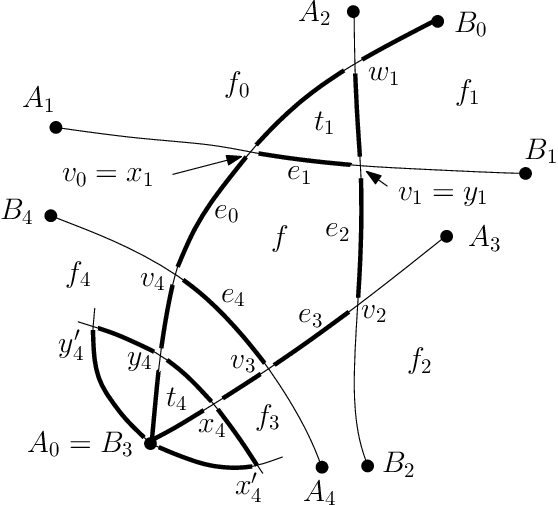}
	\caption{The notations used for vertices, edges, and faces near a $0$-pentagon $f$.
				 Bold edge-segments mark edges of $M(G)$.}
	\label{fig:0-pentagon-notations}
\end{figure}
Note also that in all the figures bold edge-segments mark edges of $M(G)$
and filled circles represent vertices of $G$.

It might happen that different names refer to the same point (e.g., $A_0$ and $B_3$ in Figure~\ref{fig:0-pentagon-notations}).
However, since $(A_i,B_i)$ crosses $(A_{i+1},B_{i+1})$ at $v_{i}$ we have:
% and crosses $(A_{i+1},B_{i+1})$ at $v_i$, we have:

\begin{obs}\label{obs:A_i,B_i}
For every $0 \leq i \leq 4$ we have $\{A_{i},B_{i}\} \cap \{A_{i+1},B_{i+1}\} = \emptyset$.
\end{obs}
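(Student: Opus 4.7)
The plan is to deduce the observation directly from the simplicity of $G$, together with the definitions of the symbols $v_i$, $A_i$, $B_i$. By construction, the edge $(A_i,B_i)$ of $G$ contains the edge-segment $e_i = v_{i-1}v_i$ of $M(G)$, and the edge $(A_{i-1},B_{i-1})$ contains $e_{i-1}=v_{i-2}v_{i-1}$. Hence both $(A_i,B_i)$ and $(A_{i-1},B_{i-1})$ pass through the point $v_{i-1}$. Since $v_{i-1}$ is a vertex of the $0$-pentagon $f$, and $f$ has no original vertex of $G$ on its boundary, the point $v_{i-1}$ is a crossing point of $G$ rather than a vertex of $G$. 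Thus $(A_i,B_i)$ and $(A_{i-1},B_{i-1})$ are two \emph{distinct} edges of $G$ that cross each other at $v_{i-1}$.

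Now I would invoke simplicity. The paper defines a simple topological graph as one in which every pair of edges intersects at most once, where an intersection is either a shared endpoint or a crossing point. Hence two distinct edges of a simple topological graph cannot both share an endpoint and cross. Since $(A_i,B_i)$ and $(A_{i-1},B_{i-1})$ already cross at $v_{i-1}$, they cannot share any endpoint, so $\{A_i,B_i\} \cap \{A_{i-1},B_{i-1}\} = \emptyset$.

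The same argument with $v_i$ playing the role of $v_{i-1}$ shows that $(A_i,B_i)$ and $(A_{i+1},B_{i+1})$ are distinct edges of $G$ crossing at $v_i$, and hence by simplicity $\{A_i,B_i\} \cap \{A_{i+1},B_{i+1}\} = \emptyset$. Combining the two disjointness statements yields the claimed equality $\{A_i,B_i\} \cap \{A_{i-1},B_{i-1},A_{i+1},B_{i+1}\} = \emptyset$.

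There is no real obstacle here; the observation is essentially a bookkeeping fact that records what simplicity means at the crossings bounding a $0$-pentagon. The only subtlety worth flagging is that one must verify that the two edges in each pair are genuinely distinct (otherwise "sharing an endpoint" is vacuous), which follows because they cross at a point that is a vertex of $M(G)$ of degree $4$ and hence a transverse crossing of two different edges of $G$.
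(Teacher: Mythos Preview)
Your argument is correct and matches the paper's own reasoning: the paper justifies the observation in a single sentence preceding it, noting that $(A_i,B_i)$ crosses $(A_{i-1},B_{i-1})$ at $v_{i-1}$ and $(A_{i+1},B_{i+1})$ at $v_i$, which (by simplicity) precludes a shared endpoint. You have simply spelled this out in full.
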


\begin{prop}\label{prop:3-non-consecutive}
Let $f$ be a $0$-pentagon that contributes charge in Step~1 through $e_i$ and $e_{i+1}$, $0 \leq i \leq 4$,
and also contributes charge through $e_{i+3}$ to a $1$-triangle whose wedge contains one $0$-quadrilateral or to a $0$-triangle.
If $ch_5(f) < 0$, then $f$ receives at least $2/3$ units of charge from $f_i$ in Step~6 (and therefore $ch_6(f) \geq 0$).
\end{prop}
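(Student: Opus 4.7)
The plan is to apply Corollary~\ref{cor:step6-2} to $f_i$, which requires verifying $|f_i|\ge 4$ and $|V(f_i)|\ge 2$. The first condition is immediate: the boundary of $f_i$ at $v_i$ consists of arms along the two edges $(A_i,B_i)$ and $(A_{i+1},B_{i+1})$, and if $f_i$ were a triangle, its third vertex would be a second crossing of these two edges, contradicting the simplicity of $G$.

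To establish $|V(f_i)|\ge 2$, I carry out a crossing count on $(A_{i-1},B_{i-1})=(A_{i+4},B_{i+4})$ and $(A_{i+2},B_{i+2})$, each of which already carries two crossings coming from the pentagon ($v_{i-2}=v_{i+3}$ and $v_{i-1}=v_{i+4}$ on the first; $v_{i+1}$ and $v_{i+2}$ on the second). The Step~1 chain through $e_i$ deposits at least one crossing on $(A_{i-1},B_{i-1})$---one if the adjacent face $g_i$ is a $0$-triangle, and two or more if $g_i$ is a $0$-quadrilateral terminating in a $0$-triangle. The Step~1 chain through $e_{i+1}$ analogously deposits crossings on $(A_{i+2},B_{i+2})$. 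The contribution through $e_{i+3}$ adds one crossing to each of these two edges in case~(b)-direct (a $0$-triangle sharing $e_{i+3}$ with $f$) and in case~(a) (a $1$-triangle reached through one $0$-quadrilateral, where the $1$-triangle's original vertex $A=A_{i-1}=B_{i+2}$ absorbs the would-be extra crossing), but adds at least two crossings to $(A_{i-1},B_{i-1})$ whenever the chain through $e_{i+3}$ passes through any $0$-quadrilateral reaching a $0$-triangle. The four-crossings-per-edge cap then forces $g_i$ and $g_{i+1}$ to be $0$-triangles and the $e_{i+3}$ contribution to fall into case~(b)-direct or case~(a); in both surviving configurations, $(A_{i-1},B_{i-1})$ and $(A_{i+2},B_{i+2})$ are saturated with exactly four crossings, all lying strictly between the pentagon-side apexes $w=(A_i,B_i)\cap(A_{i+2},B_{i+2})$, $w'=(A_{i-1},B_{i-1})\cap(A_{i+1},B_{i+1})$ and the far endpoints of those edges.

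Tracing the boundary of $f_i$ outward from $v_i$, along $(A_i,B_i)$ it reaches $w$ (the apex of $g_{i+1}$) and turns onto $(A_{i+2},B_{i+2})$ toward $A_{i+2}$; since all four crossings of $(A_{i+2},B_{i+2})$ lie on the $B_{i+2}$-side of $w$, no crossings remain between $w$ and $A_{i+2}$, and the next vertex of $f_i$ is the original vertex $A_{i+2}$. Symmetrically, the other arm reaches the original vertex $B_{i-1}$ via $w'$. The two vertices are distinct: in case~(a), the edges $(A_{i-1},B_{i-1})$ and $(A_{i+2},B_{i+2})$ already share the endpoint $B_{i+2}=A_{i-1}$, so simplicity (no parallel edges) forbids a second shared endpoint $A_{i+2}=B_{i-1}$; case~(b)-direct is ruled out by an analogous structural argument using the $0$-triangle at $w''$. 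Hence $\{A_{i+2},B_{i-1}\}\subseteq V(f_i)$, so $|V(f_i)|\ge 2$, and Corollary~\ref{cor:step6-2} yields that $f$ receives at least $1/3$ units of charge from $f_i$ in Step~6.

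The main obstacle is the crossing-count case analysis that prunes the a~priori many configurations of the three contribution chains down to just the two admissible ones; once this reduction is secured, the boundary trace and Corollary~\ref{cor:step6-2} close the argument.
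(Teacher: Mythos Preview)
Your proof is correct and follows essentially the same route as the paper: saturate $(A_{i+2},B_{i+2})$ and $(A_{i-1},B_{i-1})$ with four crossings each via the three contribution chains, trace the boundary of $f_i$ through $w_1,v_i,w_0$ to reach the original vertices $A_{i+2}$ and $B_{i-1}$, argue they are distinct, and invoke Corollary~\ref{cor:step6-2}. The paper's version is terser---it simply states that each of the two edges ``supports one $0$-triangle, one $0$-pentagon, and either another $0$-triangle or a $0$-quadrilateral'' and hence already has four crossings---whereas you spell out the pruning of $0$-quadrilateral chains explicitly; but the underlying logic is identical.

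One small point: your treatment of the distinctness $A_{i+2}\neq B_{i-1}$ in case~(b) is left as ``an analogous structural argument using the $0$-triangle at $w''$,'' with $w''$ undefined. The actual argument is not quite analogous to your case~(a) parallel-edge reasoning: in case~(b) the two edges $(A_{i+2},B_{i+2})$ and $(A_{i-1},B_{i-1})$ \emph{cross} at the apex $w_{i+3}$ of the $0$-triangle $t_{i+3}$, so a shared endpoint $A_{i+2}=B_{i-1}$ would give two intersection points (one crossing, one common endpoint), violating simplicity of the topological graph. This is exactly what the paper writes (``otherwise there would be two edges that intersect twice''); you should state it rather than wave at it.
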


\begin{proof}
Assume without loss of generality that $i=0$.
Each of $(A_2,B_2)$ and $(A_4,B_4)$ supports one $0$-triangle, one $0$-pentagon, 
and either another $0$-triangle or a $0$-quadrilateral.
Thus, they already have four crossings among the vertices of these faces (see Figure~\ref{fig:3-non-consecutive}).
\begin{figure}[ht]
    \centering
    \includegraphics[width=7cm]{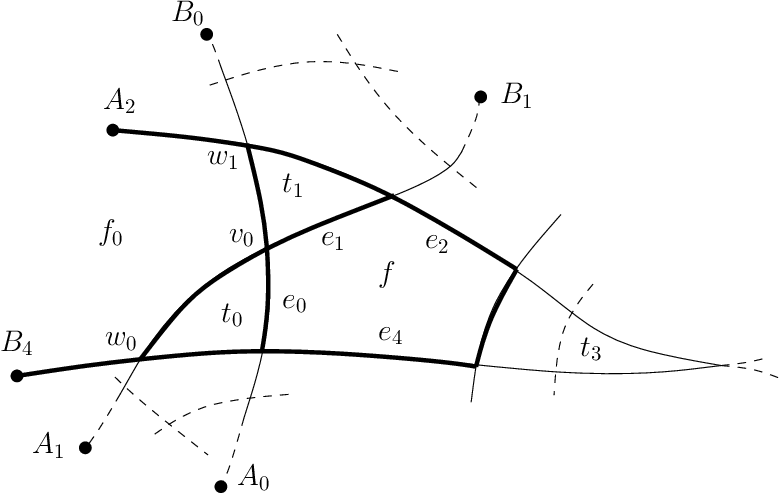}
	\caption{$f$ contributes charge through $e_0$ and $e_1$ in Step~1 and also contributes charge through $e_3$ either
				to a $0$-triangle or to a $1$-triangle whose wedge contains one $0$-quadrilateral.
				If $f$ is a vertex-neighbor of $0$-pentagons at both $w_0$ and $w_1$, then each of $(A_0,B_0)$ and $(A_1,B_1)$ has five crossings.}
	\label{fig:3-non-consecutive}
\end{figure}
Therefore, $A_2w_1$, $w_1v_0$, $v_0w_0$ and $w_0B_4$ are edges of $f_0$.
Note that $A_2 \neq B_4$ for otherwise $(A_2,B_2)$ and $(A_4,B_4)$ would intersect twice.
Thus, $|f_0| \geq 5$ and $|V(f_0)|\geq 2$.
Clearly $A_2,B_2 \notin \mathcal{P}'(f_0)$.
Observe that at least one of $w_0$ and $w_1$ is not in $\mathcal{P}'(f_0)$ either, for otherwise each of $(A_0,B_0)$ and $(A_1,B_1)$ has five crossings (see Figure~\ref{fig:3-non-consecutive}).
Thus, $|\mathcal{P}'(f_0)| \leq |f_0|-3$.
The claim now follows from Proposition~\ref{prop:step6}.
% Suppose that the clockwise chain on the boundary of $f_0$ from $B_4$ to $A_2$ contains $k\geq 1$ edges.
% Each of these edges increases the charge of $f_0$ after Step~5 by at least $2/3$,
% since it increases the initial charge by $1$ and at most $1/3$ unit of charge are contributed through it
% (if the edge contains an original vertex then it increases $ch_5(f)$ by at least $4/3$). 
% Note that $f_0$ contributes at most $1/6$ units of charge through each of $A_2w_1$ and $w_0B_4$.
% It follows that $ch_5(f_0) \geq 4+2k/3+2-4-2/3-2/3-2/6 = (2k+1)/3$.
% Since $f_0$ does not contribute charge in Step~6 through $A_2$ and $B_4$ we have $|B(f_0)| \leq 3+k-1=k+2$.
% Therefore $f_0$ contributes at least $\frac{2k+1}{3(k+2)} \geq \frac{1}{3}$ units of charge to each $0$-pentagon in $B(f_0)$, including $f$.
\end{proof}

\begin{prop}\label{prop:2/3}
Let $f$ be a $0$-pentagon that contributes charge through $e_i$ ($0 \leq i \leq 4$) in Step~1 such that the wedge of $t_i$ contains no $0$-quadrilaterals.
\begin{packed_enum}
\item If $f$ contributes charge through $e_{i+1}$ in Step~3 such that the wedge of $t_{i+1}$ contains exactly one $0$-quadrilateral
and $B_{i-1} \in V(f_i)$, then $f_i$ sends at least $2/3$ units of charge to $f$ in Step~6.
\item If $f$ contributes charge through $e_{i-1}$ in Step~3 such that the wedge of $t_{i-1}$ contains exactly one $0$-quadrilateral
and $A_{i+1} \in V(f_{i-1})$, then $f_{i-1}$ sends at least $2/3$ units of charge to $f$ in Step~6.
\end{packed_enum}
\end{prop}

\begin{proof}
By symmetry we may assume without loss of generality that $f$ contributes charge through $e_1$ in Step~1 and through $e_2$ in Step~3, 
the wedge of $t_2$ contains one $0$-quadrilateral, and $B_0 \in V(f_1)$.
It follows that $x_2,v_1,w_1,B_0 \in V(f_1)$ and $|f_1| \geq 5$ (see Figure~\ref{fig:2thirds}).
%Denote by $z$ the vertex of $f_1$ that precedes $x_2$ (see Figure~\ref{fig:2thirds}).
\begin{figure}[ht]
    \centering
    \subfigure[$f_1$ contributes at most $1/6$ units of charge through $x_2v_1$.]{\label{fig:2thirds}
    {\includegraphics[width=4.5cm]{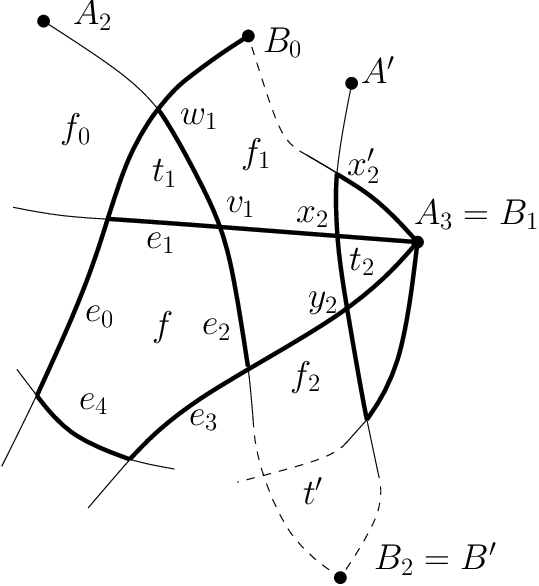}}}
	\hspace{2mm}
    \subfigure[If $f_0$ is a $0$-pentagon, then it does not contribute charge through $rw_1$.]{\label{fig:2thirds-c}
    {\includegraphics[width=4.5cm]{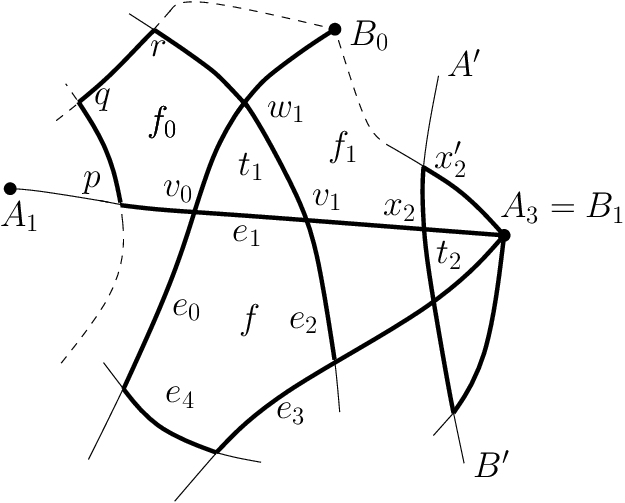}}}
	\hspace{2mm}
    \subfigure[If $f_0$ is a $0$-pentagon that contributes charge through $qr$ in Step~3, then it does not contribute charge through $pq$.]{\label{fig:2thirds-a}
    {\includegraphics[width=4.5cm]{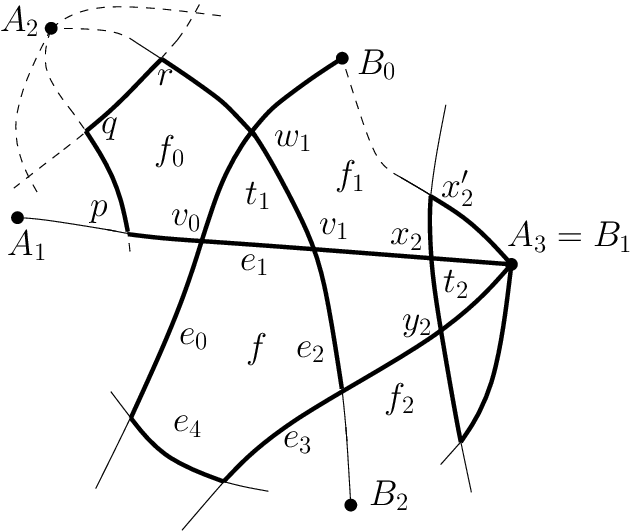}}}
	\caption{Illustrations for Proposition~\ref{prop:2/3}:$f$ contributes charge through $e_1$ in Step~1 and through $e_2$ in Step~3,
			the wedges of $t_1$ and $t_2$ contain zero and one $0$-quadrilaterals, respectively, and $B_0 \in V(f_1)$.}
	\label{fig:2thirds-prop}
\end{figure}
Observe that $f_1$ contributes at most $1/6$ units of charge through each of its edges that are incident to $B_0$
and also through $x'_2x_2$ and $x_2v_1$ (by Proposition~\ref{prop:1-triangle-neighbor}).
% Indeed, let $(A',B')$ be the edge of $G$ that contains $x_2y_2$.
% Observe that this edge has four crossings since it supports three $1$-triangles ($t_2$ and its neighbors).
% If $f$ contributes charge through $x_2v_1$ to a triangle $t'$, then $(A',B')$ and $(A_2,B_2)$ must intersect and $f_2$ must be a $0$-quadrilateral.
% If $t'$ is $0$-triangle then $(A',B')$ has five crossings.
% Therefore, $t'$ is a $1$-triangle and an endpoint of $(A',B')$, say $B'$, coincides with $B_2$.
% However, then one neighbor of $t'$ is incident to two original vertices ($A_3$ and $B_2$),
% and therefore $t'$ does not receive charge in Step~3.
Note also that $B_0,x'_2,x_2,w_1 \notin \mathcal{P}'(f_1)$:
It is clear that $B_0,x'_2,x_2 \notin \mathcal{P}'(f_1)$ since each of the vertex-neighbors at these vertices contains an original vertex of $G$
and therefore cannot be a $0$-pentagon.
Suppose that $f_0$, the vertex-neighbor of $f_1$ at $w_1$ is a $0$-pentagon such that $ch_5(f_0) < 0$,
and let $w_1,v_0,p,q,r$ be its vertices listed in a clockwise order (see Figure~\ref{fig:2thirds-c}).

Consider the edge $rw_1$.
Since $B_0w_1$ is an edge of $M(G)$, if $f_0$ contributes charge through $rw_1$,
then it must be to a $1$-triangle whose vertices are $r,w_1,B_0$.
However, one neighbor of this triangle is $f_1$ and its other neighbor is incident to two original vertices,
therefore this $1$-triangle receives charge from both of its neighbors in Step~4 and no charge from $f_0$.

Consider the edge $pq$.
Since $(A_1,B_1)$ has four crossings among the vertices of $f_0$ and $f_1$,
it follows that if $f_0$ contributes charge through $pq$, then it must be to a $1$-triangle which is its immediate neighbor at $pq$.
Thus, if $f_0$ contributes $1/3$ units of charge through $pq$, then it follows from Proposition~\ref{prop:e_1-e_2} that it contributes at most 
$1/6$ units of charge through $qr$ and through $v_0p$ and hence $ch_5(f_0) \geq 0$.

Consider the edge $qr$.
Since $(A_2,B_2)$ has four crossings among the vertices of $f_0$ and $f$,
it follows that if $f_0$ contributes charge through $qr$, then it must be to a $1$-triangle which is its immediate neighbor at $qr$.
Note that if $f_0$ contributes $1/3$ units of charge through $qr$, then its immediate neighbor at $pq$ cannot be a triangle
or a $0$-quadrilateral (otherwise $(A_1,B_1)$ would have more than four crossings, see Figure~\ref{fig:2thirds-a}).
Therefore it such a case $f_0$ does not contribute charge through $pq$.

It follows that $ch_5(f_0) \geq 0$ and therefore $f_0 \notin \mathcal{P}(f_1)$.
Therefore, $f_1$ contributes to $f$ in Step~6 at least $\frac{|f_1|+1-4-1/3-1/3-4/6-(|f_1|-5)/3}{|f_1|-4} \geq 2/3$
units of charge.
\end{proof}

\begin{prop}\label{prop:1/6}
Let $f$ be a $0$-pentagon that contributes charge through $e_i$ ($0 \leq i \leq 4$) in Step~3 such that the wedge of $t_i$ contains exactly one $0$-quadrilateral.
\begin{packed_enum}
\item If $f$ contributes charge through $e_{i+1}$ in Step~5 such that the wedge of $t_{i+1}$ contains no $0$-quadrilaterals,
then $f_i$ sends at least $1/6$ units of charge to $f$ in Step~6.
\item If $f$ contributes charge through $e_{i-1}$ in Step~5 such that the wedge of $t_{i-1}$ contains no $0$-quadrilaterals,
then $f_{i-1}$ sends at least $1/6$ units of charge to $f$ in Step~6.
\end{packed_enum}
\end{prop}

\begin{proof}
By symmetry we may assume without loss of generality that $f$ contributes charge through $e_1$ in Step~3 and through $e_2$ in Step~5, 
the wedge of $t_1$ contains exactly one $0$-quadrilateral, and the wedge of $t_2$ contains no $0$-quadrilaterals
(see Figure~\ref{fig:1/6}).
\begin{figure}[ht]
    \centering
    \includegraphics[width=4.5cm]{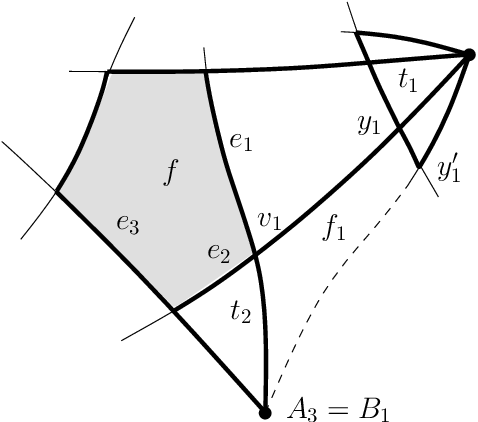}
	\caption{An illustration for Proposition~\ref{prop:1/6}: $f$ contributes charge through $e_1$ in Step~1 and through $e_2$ in Step~5,
			the wedges of $t_1$ and $t_2$ contain one and zero $0$-quadrilaterals, respectively.}
	\label{fig:1/6}
\end{figure}
Consider $f_1$ and observe that $A_3,v_1,y_1,y'_1 \in V(f_1)$.
By Proposition~\ref{prop:1-triangle-neighbor} $f_1$ contributes at most $1/6$ units of charge through each of $v_1y_1$ and $y_1y'_1$.
Note that $f_1$ also contributes at most $1/6$ units of charge through each of edges that are incident to $A_3$.
Since $A_3,y_1,y'_1 \notin \mathcal{P}'(f_1)$, if the size of $f_1$ is at least five, then it contributes at least
$\frac{|f_1|+1-4-1/3-4/6-(|f_1|-4)/3}{|f_1|-3} \geq 1/6$ units of charge to $f$ in Step~6.

If $f_1$ is a $1$-quadrilateral, then observe that it does not contribute charge through $y'_1A_3$,
since its immediate neighbor at this edge is incident to two original vertices.
In this case the charge of $f_1$ after Step~5 is at least $1/6$ and $f$ gets all this excess charge.
\end{proof}

% lemmas
%%%%%%%%%%%%%%%%%%%%%%%%%%%%%%%%%%%%%%%%%%%%%%%%%%%%%%%

\begin{lem}\label{lem:ch_6-0-pentagon-negative}
Let $f$ be a $0$-pentagon such that $ch_1(f)<0$ and $ch_5(f)<0$. Then $ch_6(f) \geq 0$.
\end{lem}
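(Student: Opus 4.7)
The plan is to do a short case analysis on how the charge of $f$ was depleted during Steps~1, 3 and 5, and then to apply Proposition~\ref{prop:3-non-consecutive} repeatedly to collect enough charge from the vertex-neighbors $f_0,\ldots,f_4$ in Step~6.

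First I would note that, by Observation~\ref{obs:once}, $f$ contributes charge through each of its five edges at most once during Steps~1--5. Combined with $ch_0(f)=1$ and the hypothesis $ch_1(f)<0$, this forces $f$ to contribute in Step~1 through either all five edges (so $ch_1(f)=-2/3$) or exactly four edges (so $ch_1(f)=-1/3$). A further $1/3$ (in Step~3) or $1/6$ (in Step~5) can only leave on the remaining fifth edge, so the only feasible values of $ch_5(f)<0$ are $-1/3$, $-1/2$ and $-2/3$, and it suffices to show that $f$ collects at least $|ch_5(f)|$ units in Step~6.

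In the subcase where all five edges contributed in Step~1, I would fix any index $i\in\{0,1,2,3,4\}$: the edges $e_i$, $e_{i+1}$ and $e_{i+3}$ all contribute in Step~1 to $0$-triangles (possibly via chains of $0$-quadrilaterals), so the ``$0$-triangle'' alternative of Proposition~\ref{prop:3-non-consecutive} applies, and $f_i$ sends at least $1/3$ units to $f$ in Step~6. The proof of that proposition in fact yields $|f_i|\geq 5$ and $|V(f_i)|\geq 2$, so $f_i$ is itself non-deficient; picking two indices $i\ne i'$ whose vertex-neighbors are distinct produces the required total of $2/3$. In the subcase where exactly four edges contributed in Step~1, let $e_j$ be the unused edge. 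The hypothesis of Proposition~\ref{prop:3-non-consecutive} at index $i$ fails only when $e_j\in\{e_i,e_{i+1},e_{i+3}\}$, that is, for $i\in\{j,j-1,j-3\}$, leaving at least two cyclically valid indices that each donate $1/3$. If in addition $f$ contributed through $e_j$ in Step~3 to a $1$-triangle whose wedge contains exactly one $0$-quadrilateral, then $i=j-3$ becomes valid via the ``$1$-triangle'' alternative of the proposition and supplies a third $1/3$, covering the worst remaining deficit $|ch_5(f)|=2/3$; the milder deficits $1/3$ and $1/2$ are handled by just one of the two guaranteed indices.

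The main obstacle will be the bookkeeping around distinctness and Step~6's dilution rule. I must verify that two valid indices $i\neq i'$ give \emph{distinct} faces $f_i$ and $f_{i'}$, so that their $1/3$ portions add rather than collapse to a single share, and that when an $f_i$ donates simultaneously to several deficient $0$-pentagons the share actually received by $f$ is not diluted below $1/3$. Both verifications rest on tracking the crossing count on each $G$-edge $(A_k,B_k)$ and exploiting the four-crossing budget, in the same spirit as the proof of Proposition~\ref{prop:3-non-consecutive}.
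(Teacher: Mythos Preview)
Your approach is the paper's approach---repeated applications of Proposition~\ref{prop:3-non-consecutive}---but you are working much harder than necessary and stumble in the process. The paper's argument is three lines: since $ch_1(f)<0$, at least four edges contribute in Step~1, say $e_0,e_1,e_2,e_3$; applying Proposition~\ref{prop:3-non-consecutive} at $i=0$ (using $e_0,e_1$ and the $0$-triangle reached through $e_3$) and at $i=2$ (using $e_2,e_3$ and the $0$-triangle reached through $e_0$) yields $1/3$ from each of $f_0$ and $f_2$; since $ch_5(f)\ge 1-5\cdot\tfrac13=-\tfrac23$ in every case, this $2/3$ suffices. No case split on the value of $ch_5(f)$ is needed, no third application, and no discussion of the wedge of $t_j$.

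Your plan contains a concrete slip: a single $1/3$ donation does not cover a deficit of $1/2$, so the sentence ``the milder deficits $1/3$ and $1/2$ are handled by just one of the two guaranteed indices'' is false as written. You also restrict your ``third $1/3$'' argument to the case where the wedge of $t_j$ contains exactly one $0$-quadrilateral, without disposing of the case of two or more (Proposition~\ref{prop:e_1-e_2} only rules out zero). Both issues dissolve once you notice that your two guaranteed indices---which after relabelling are precisely the paper's $i=0$ and $i=2$---already deliver $2/3$. The distinctness/dilution worry you raise at the end is a fair point (the paper does not address it explicitly either), but the structure forced in the proof of Proposition~\ref{prop:3-non-consecutive}, namely the four consecutive boundary edges $A_2w_1,\,w_1v_0,\,v_0w_0,\,w_0B_4$ of $f_0$ and their analogues for $f_2$, makes $f_0\ne f_2$ easy to check.
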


\begin{proof}
If $ch_1(f)<0$, then $f$ contributes charge to at least four $0$-triangles in Step~1.
Assume without loss of generality that $f$ contributes charge in Step~1 through $e_0,e_1,e_2,e_3$.
It follows from Proposition~\ref{prop:3-non-consecutive} that $f$ receives at least $2/3$
units of charge from each of $f_0$ and $f_2$ and therefore $ch_6(f) \geq 0$.
\end{proof}

\begin{lem}\label{lem:ch_6-0-pentagon-0}
Let $f$ be a $0$-pentagon such that $ch_1(f)=0$ and $ch_5(f)<0$. Then $ch_6(f) \geq 0$.
\end{lem}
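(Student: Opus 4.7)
The plan is to use the fact that $ch_1(f)=0$ forces $f$ to contribute exactly three times in Step~1, each contribution sending $1/3$ unit of charge to an adjacent $0$-triangle. Since $|f|=5$, the three Step-1 contributing edges of $f$ lie in one of exactly two rotational configurations: three consecutive edges (Case~A, say $e_0, e_1, e_2$) or two consecutive plus one separated edge (Case~B, say $e_0, e_1, e_3$). The remaining one or two edges of $f$ must host the additional contributions in Step~3 (each worth $1/3$) or Step~5 (each worth $1/6$) that drive $ch_5(f) < 0$, and the task reduces to recovering this deficit from the vertex-neighbors $f_0, \dots, f_4$ in Step~6.

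In Case~B, Proposition~\ref{prop:3-non-consecutive} applies directly at $i=0$ because the third Step-1 contribution through $e_3$ is to a $0$-triangle; this yields $\geq 1/3$ from $f_0$ and settles every sub-case with $ch_5(f) \geq -1/3$. When both remaining edges $e_2$ and $e_4$ host Step-3 extras, pushing $ch_5(f)$ down to $-1/2$ or $-2/3$, I would exploit that the three $0$-triangles $t_0, t_1, t_3$ together with the pentagon already saturate $(A_2,B_2)$ and $(A_4,B_4)$ at four crossings, which sharply restricts each extra wedge to contain at most one intermediate $0$-quadrilateral. A short inspection of these restricted wedges then forces at least one further vertex-neighbor, among $f_2, f_3, f_4$, to have size at least $5$ and to be incident to enough original vertices to supply the missing $1/6$ or $1/3$ via Proposition~\ref{prop:step6} and Corollary~\ref{cor:step6-2}.

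In Case~A, the analysis splits on which of $e_3, e_4$ host extras and of what type. When an extra is a Step-3 contribution to a $1$-triangle whose wedge contains exactly one $0$-quadrilateral, Proposition~\ref{prop:3-non-consecutive} with $i=0$ (for extras at $e_3$) or $i=1$ (for extras at $e_4$) directly provides $\geq 1/3$ from $f_0$ or $f_1$. For the remaining sub-cases, in which the extra is a Step-5 contribution or involves a wedge of size zero or at least two, the argument parallels the proof of Proposition~\ref{prop:3-non-consecutive}: the crossings forced on the supporting edges of $G$, combined with the four-crossings-per-edge constraint, force $f_0$ or $f_1$ to be a face of size at least $5$ with at least two incident original vertices, so Corollary~\ref{cor:step6-2} still yields $\geq 1/3$. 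The principal obstacle is the sheer multiplicity of sub-cases, each requiring a dedicated local inspection of the plane map around the relevant vertex-neighbor to identify the extra original vertices or extra edges that keep its charge large enough to recover the deficit.
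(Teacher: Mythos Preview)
Your plan follows the same case split as the paper (three consecutive Step-1 edges versus two-plus-one), and your invocation of Proposition~\ref{prop:3-non-consecutive} is correct in the places you cite it. Two points deserve attention.

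In your Case~A (three consecutive edges $e_0,e_1,e_2$), you assert that in the residual sub-cases the compensating vertex-neighbor is always $f_0$ or $f_1$. This is not so: when the extra contribution through $e_3$ is a Step-5 contribution and the $1$-triangle $t_3$ shares the edge $e_3$ with $f$ (wedge with no $0$-quadrilateral), neither $f_0$ nor $f_1$ is forced to carry two original vertices. The paper instead observes that $B_2$ is incident to $f_2$ (because $t_3$ sits right on $e_3$) and that $B_1$ is also incident to $f_2$ (because $(A_1,B_1)$ already has four crossings supporting $t_0,f,t_2$), so it is $f_2$ that supplies the missing $1/3$ via Corollary~\ref{cor:step6-2}. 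Your stated plan would not find this face.

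In your Case~B ($e_0,e_1,e_3$), your idea of locating a second contributor among $f_2,f_3,f_4$ to cover the deficit beyond the first $1/3$ from $f_0$ may be workable, but the paper takes a shorter route: it shows that $f_0$ alone contributes at least $2/3$. The key is that, once both extra wedges are pinned down to contain exactly one $0$-quadrilateral, the vertex-neighbors $f_1$ and $f_4$ of $f$ turn out to be vertex-neighbors of $f_0$ as well and are incident to original vertices ($B_0$ and $A_1$ respectively), so $f_1,f_4\notin B(f_0)$; moreover $f_0$ sends no charge through its edges $A_2w_1$ and $w_0B_4$. This sharpens the estimate on $ch_5(f_0)/|B(f_0)|$ to at least $2/3$, wiping out the whole deficit in one stroke rather than hunting for a second donor.
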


\begin{proof}
Since $ch_1(f)=0$, $f$ contributes charge through exactly three of its edges in Step~1.
We consider two cases, according to whether these edges are consecutive on the boundary of $f$.

\smallskip
\noindent\underline{Case 1:} Assume without loss of generality 
that $f$ contributes charge in Step~1 through $e_0,e_1$ and $e_2$.
If $ch_5(f)<0$, then $f$ contributes charge through at least one more edge.
Suppose without loss of generality that $f$ contributes charge through $e_3$.

\smallskip
\noindent\underline{Subcase 1.1:} If $f$ contributes charge through $e_3$ in Step~3,
then the wedge of $t_3$ must contain exactly one $0$-quadrilateral.
Indeed, more than one $0$-quadrilateral implies five crossings on $(A_2,B_2)$,
and if $t_3$ shares an edge with $f$, then it follows from Proposition~\ref{prop:e_1-e_2}
that $f$ cannot contribute charge through $e_2$ in Step~1.
It follows from Proposition~\ref{prop:3-non-consecutive} that $f$ receives at least
$2/3$ units of charge from $f_0$ and and hence $ch_6(f) \geq 0$.

\smallskip
\noindent\underline{Subcase 1.2:} If $f$ contributes $1/6$ units of charge through $e_3$ in Step~5,
then again if the wedge of $t_3$ contains one $0$-quadrilateral, then
by Proposition~\ref{prop:3-non-consecutive} $f_0$ sends at least $2/3$ units of charge to $f$ in Step~6.
Assume therefore that $t_3$ and $f$ share an edge (see Figure~\ref{fig:3-in-1b}).
\begin{figure}[ht]
    \centering
    \includegraphics[width=6cm]{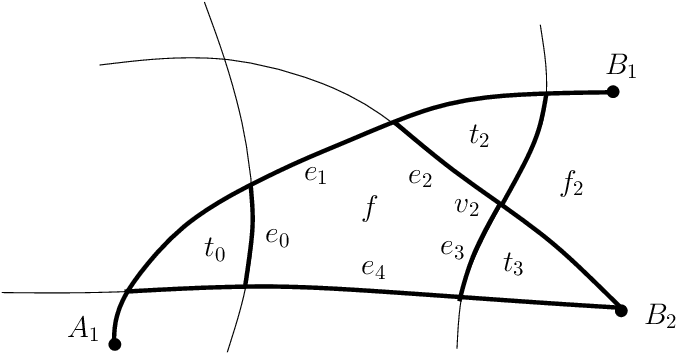}
	\caption{$f$ contributes charge to $t_0,t_1,t_2$ in Step~1 and to $t_3$ in Step~5.}
	\label{fig:3-in-1b}
\end{figure}
% \begin{figure}[ht]
%     \centering
%     \subfigure[$f$ contributes charge to $t_0,t_1,t_2$ in Step~1 and to $t_3$ in Step~5.]{\label{fig:3-in-1b}
%     {\includegraphics[width=6cm]{3-in-1b}}}
% 	\hspace{10mm}
%     \subfigure[$f$ contributes charge to $t_0,t_1,t_3$ in Step~1, to $t_2$ in Step~3, and to $t_4$ in either Step~3 or~5.]{\label{fig:3-in-1c}
%     {\includegraphics[width=6cm]{3-in-1c}}}
% 	\caption{$f$ contributes charge to three $0$-triangles in Step~1.}
% 	\label{fig:3-in-1}
% \end{figure}
It follows that $B_2$ is incident to $f_2$.
Since $(A_1,B_1)$ supports $t_0,f$ and $t_2$, it already has four crossings,
and therefore $B_1$ is also a vertex of $f_2$.
Note that $B_1 \neq B_2$ by Observation~\ref{obs:A_i,B_i}.
Thus, $|V(f_2)|\geq 2$ and $|f_2| \geq 4$, and therefore, by Proposition~\ref{prop:step6}, 
$f_2$ sends at least $1/3$ units of charge to $f$ in Step~6.
% Suppose that the clockwise chain on the boundary of $f_2$ from $B_1$ to $B_2$ contains $k\geq 1$ edges.
% $f_2$ may contribute at most $1/3$ units of charge though each of these edges,
% but each of these edges also increases the initial charge of $f_2$ by one.
% Note that $f_2$ contributes $1/3$ units of charge through $v_2w_2$ and at most $1/6$ units of charge through each of $w_2B_1$ and $B_2v_2$.
% It follows that $ch_5(f_2) \geq 3+2k/3+2-4-2/3-1/3-2/6 = (2k-1)/3$.
% Since $f_2$ does not contribute charge in Step~6 through $B_1$ and $B_2$ we have $|B(f_2)| \leq 2+k-1=k+1$.
% Therefore $f_2$ contributes at least $\frac{2k-1}{3(k+1)} \geq \frac{1}{6}$ units of charge to each $0$-pentagon in $B(f_2)$, including $f$.
% Thus, $f_2$ compensates for the charge taken by $t_3$.

Note that if $f$ contributes charge through $e_4$,
then by symmetry either $f_1$ or $f_4$ compensates for this charge in Step~6.
This concludes Case~1.

\smallskip
\noindent\underline{Case 2:} Assume without loss of generality 
that $f$ contributes charge in Step~1 through $e_0,e_1$ and $e_3$.
It follows from Proposition~\ref{prop:3-non-consecutive} that $f_0$ sends at least $2/3$ units of charge to $f$ in Step~6
and therefore $ch_6(f) \geq 0$.
\end{proof}

\begin{lem}\label{lem:ch_6-0-pentagon-1/3}
Let $f$ be a $0$-pentagon such that $ch_1(f)=1/3$ and $ch_5(f)<0$. Then $ch_6(f) \geq 0$.
\end{lem}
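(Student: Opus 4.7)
The plan is to follow the same template as the preceding lemmas, branching on the cyclic position of the two edges through which $f$ sheds charge in Step~1 and then on the additional edges through which $f$ sheds charge in Steps~3 and~5. Since $f$ is a $0$-pentagon it is untouched by Steps~2 and~4, so $ch_2(f)=1/3$ and the hypothesis $ch_5(f)<0$ forces the total amount shed in Steps~3 and~5 to exceed $1/3$. By Observation~\ref{obs:once} each edge of $f$ carries charge at most once across these steps, and the only admissible per-edge amounts are $1/3$ (Step~3) and $1/6$ (Step~5); hence $f$ uses at least two further edges and sheds a total of at least $1/2$ beyond $ch_1(f)$. The goal is, in each configuration, to locate a vertex-neighbor $f_i$ with enough surplus after Step~5 to reimburse $f$ in Step~6.

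After applying the WLOG symmetry, the two Step-1 edges are either consecutive ($e_0,e_1$) or non-consecutive ($e_0,e_2$). In the consecutive subcase, whenever a further contribution is shed through $e_3$ to a $0$-triangle or to a $1$-triangle whose wedge contains one $0$-quadrilateral, Proposition~\ref{prop:3-non-consecutive} delivers $1/3$ units from $f_0$, which already covers the worst additional deficit of $1/3$. The remaining subcases I plan to treat directly are: (i) $f$ sheds $1/6$ units through $e_3$ in Step~5 to a $1$-triangle $t_3$ that shares the edge $e_3$ with $f$, and (ii) the further edges used by $f$ lie in $\{e_2,e_4\}$ with $e_3$ untouched. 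Case~(i) I expect to be parallel to Subcase~1.2 of Lemma~\ref{lem:ch_6-0-pentagon-0}: counting the four crossings forced on $(A_2,B_2)$ shows that $f_2$ picks up a second original vertex and has size at least $4$, so Corollary~\ref{cor:step6-2} delivers $1/3$ from $f_2$. Case~(ii) I intend to handle by a direct crossing count on $(A_1,B_1)$, $(A_2,B_2)$, $(A_3,B_3)$ and $(A_4,B_4)$, which forces $f_1$ or $f_4$ to contain an original vertex with enough surplus for Proposition~\ref{prop:step6} to yield the $1/6$ or $1/3$ units required.

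The non-consecutive subcase is analogous: with Step-1 contributions on $e_0$ and $e_2$, the further contributions live among $e_1,e_3,e_4$. Proposition~\ref{prop:e_1-e_2} prohibits a Step-3 contribution through $e_1$ (resp.~$e_3$) to a $1$-triangle sharing that edge with $f$, since such a contribution would preclude the Step-1 contribution through $e_0$ and $e_2$ (resp.~$e_2$). This greatly restricts the admissible configurations; in those where Proposition~\ref{prop:3-non-consecutive} can be invoked after relabeling I obtain $1/3$ from a suitable $f_i$ in Step~6, while in the rest a crossing count along the relevant edges of $G$ pushes some vertex-neighbor to be at least a $1$- or $2$-quadrilateral, after which Corollary~\ref{cor:step6-2} or Proposition~\ref{prop:step6} closes the case.

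The main obstacle I anticipate is the collection of configurations in which all further contributions are $1/6$ units shed through Step~5 to $1$-triangles that share an edge with $f$. Such configurations impose the weakest structural constraints on the surrounding crossings, so forcing one of the vertex-neighbors $f_i$ to be a face with an original vertex and size at least~$4$ requires a careful combined count of crossings on all five edges $(A_i,B_i)$, together with uniqueness arguments (using that $M(G)$ is $2$-connected by Proposition~\ref{prop:2-connected}, that no two edges of $G$ meet twice, and that no edge self-crosses) to prevent the relevant $f_i$ from collapsing to something too small to be useful in Step~6.
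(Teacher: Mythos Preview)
Your high-level case split (consecutive versus non-consecutive Step-1 edges, then branching on the further Step-3/Step-5 edges) matches the paper's, but the plan has two concrete gaps that would prevent it from going through as written.

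First, the parallel you draw to Subcase~1.2 of Lemma~\ref{lem:ch_6-0-pentagon-0} does not hold. In that lemma $f$ sheds charge through \emph{three} edges in Step~1, so e.g.\ $(A_1,B_1)$ already carries four crossings from supporting $t_0$, $f$, and $t_2$; this is what forces $B_1\in V(f_2)$. Here $ch_1(f)=1/3$ means only \emph{two} Step-1 edges, so that crossing count loses one constraint and the argument breaks. The paper compensates by proving three auxiliary results tailored to the two-edge situation (Propositions~\ref{prop:2-in-1a}, \ref{prop:2-in-1b}, \ref{prop:2-in-1x}), which locate an $f_i$ with enough surplus by a more delicate analysis of the wedge of $t_2$ (or $t_4$) and of the boundary chain of $f_i$; these are the real engine of Case~1 and of Subcases~2.2 and~2.4 in the paper. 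Your plan does not contain anything playing this role.

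Second, in the non-consecutive case $e_0,e_2$ you say you will invoke Proposition~\ref{prop:3-non-consecutive} ``after relabeling'' in some configurations. That proposition requires two \emph{consecutive} Step-1 edges $e_i,e_{i+1}$; with $e_0,e_2$ non-consecutive there is no such pair under any relabeling, so the proposition is never applicable in Case~2. The paper instead uses Proposition~\ref{prop:2-in-1x} (and direct analyses such as Subcase~2.3, where one must additionally argue that certain $1$-quadrilaterals $f_2,f_4$ cannot simultaneously have low $ch_5$, ruling out parallel edges) to obtain the needed $1/3$ or $1/6$ from a vertex-neighbor. Your fallback of a ``crossing count along the relevant edges'' is the right instinct, but in several subcases (notably the paper's Subcase~2.3) the count alone does not force $|V(f_i)|\ge 2$; one also has to bound $|B(f_i)|$ and show that specific candidate $0$-pentagons are not in $B(f_i)$, which your outline does not anticipate.
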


\begin{proof}
Suppose that $ch_1(f)=1/3$ and $ch_5(f)<0$.
Assume without loss of generality that $f$ contributes $1/3$ units of charge in Step~1 to $t_0$ through $e_0$.
By symmetry, there are two cases to consider, according to whether the other edge through which $f$
sends charge in Step~1 is $e_1$ or $e_2$.

\bigskip
\noindent\underline{Case 1:} $f$ contributes charge through $e_0$ and $e_1$ in Step~1.
We will use the following propositions.

\begin{prop}\label{prop:2-in-1a}
Let $f$ be a $0$-pentagon such that $ch_5(f) <0$ and $f$ contributes charge through $e_i$ and $e_{i+1}$ in Step~1.
\begin{packed_enum}
\item Suppose that $f$ contributes charge through $e_{i+2}$ in Step~3.
If $A_i \in V(f_{i+4})$, then $f_{i+4}$ contributes at least $1/3$ units of charge to $f$ in Step~6.
Otherwise, if $A_i \notin V(f_{i+4})$, then $f_{i+1}$ contributes at least $1/3$ units of charge to $f$ in Step~6.
\item Suppose that $f$ contributes charge through $e_{i-1}$ in Step~3.
If $B_{i+1} \in V(f_{i+1})$, then $f_{i+1}$ contributes at least $1/3$ units of charge to $f$ in Step~6.
Otherwise, if $B_{i+1} \notin V(f_{i+1})$, then $f_{i+4}$ contributes at least $1/3$ units of charge to $f$ in Step~6.
\end{packed_enum}
\end{prop}

\begin{proof}
Assume without loss of generality that $i=0$.
By symmetry, it is enough to consider the first case in which $f$ contributes charge through $e_2$ in Step~3.
It follows from Proposition~\ref{prop:e_1-e_2} and the maximum number of crossings per edge that the wedge
of $t_2$ contains exactly one $0$-quadrilateral.
Since $(A_1,B_1)$ has four crossings, it follows that $A_1$ and $w_0$ are vertices of $f_4$.
Note that $|f_4| \geq 4$, for otherwise if $|f_4|=3$ then this implies that $A_1=A_0$
which is impossible by Observation~\ref{obs:A_i,B_i}.

If $A_0 \in V(f_4)$ (see Figure~\ref{fig:2-in-1a}), then it follows from Proposition~\ref{prop:step6} that $f_4$
sends at least $1/3$ units of charge to $f$ in Step~6.
\begin{figure}[ht]
    \centering
    \subfigure[If $A_0 \in V(f_4)$, then $f_4$ sends at least $1/3$ unit of charge to $f$ in Step~6.]{\label{fig:2-in-1a}
    {\includegraphics[width=5cm]{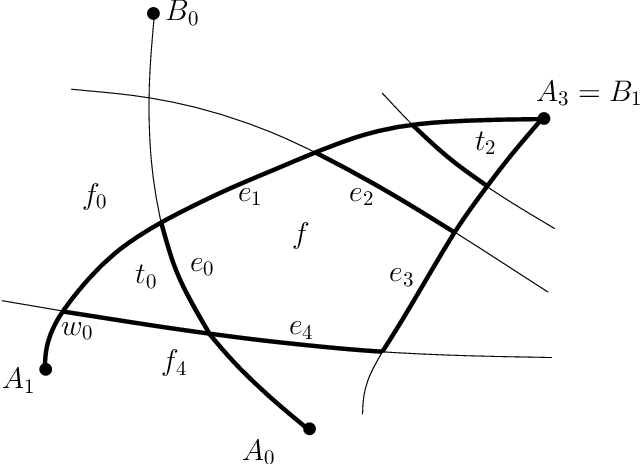}}}
	\hspace{10mm}
    \subfigure[If $A_0 \notin V(f_4)$, then $f_1$ sends at least $1/3$ unit of charge to $f$ in Step~6.]{\label{fig:2-in-1b}
    {\includegraphics[width=5cm]{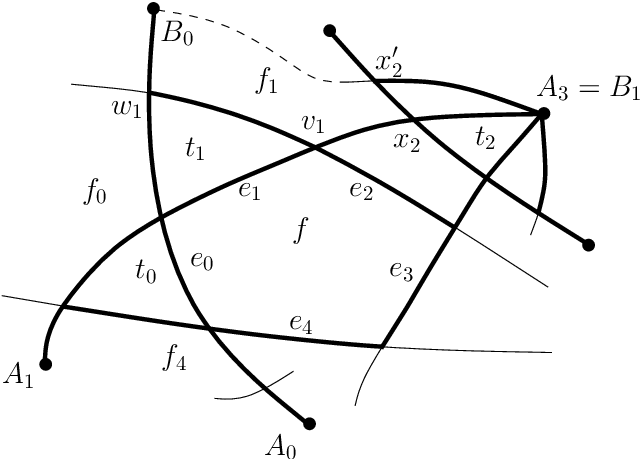}}}
	\caption{Illustrations for the proof of Proposition~\ref{prop:2-in-1a}: $f$ contributes charge through $e_0$ and $e_1$ in Step~1 and
				through $e_2$ in Step~3.}
	\label{fig:2-in-1-1/3}
\end{figure}

Otherwise, if $A_0$ is not a vertex of $f_4$, then it follows that $B_0$ and $w_1$ are vertices of $f_1$ and $|f_1| \geq 5$ (see Figure~\ref{fig:2-in-1b}).
% Let $z$ be the vertex of $f_1$ that precedes $x_2$ in the clockwise order of the vertices on the boundary of $f_1$.
% Recall that the two neighbors of $t_2$ must be $1$-triangles and therefore $z$ is a crossing point in $G$.
Observe that $f_1$ contributes at most $1/6$ units of charge through each of $x'_2x_2$ and $x_2v_1$ by Proposition~\ref{prop:1-triangle-neighbor},
and also through each of its edges that are incident to $B_0$.
Note also that $x'_2,x_2,B_0 \notin \mathcal{P}'(f_1)$.
Therefore $f_1$ contributes at least $\frac{|f_1|+1-4-1/3-1/3-4/6-(|f_1|-5)/3}{|f_1|-3} \geq 1/3$ units of charge to $f$ in Step~6.
% Let $k \geq 1$ be the number of edges on the clockwise chain from $B_0$ to $z$ on the boundary of $f_1$.
% Then $ch_5(f_1) \geq 4+2k/3+1-4-1/3-2/3-2/6 = (2k-1)/3$.
% Since $f_1$ is not a vertex-neighbor of a $0$-pentagon through $B_0,x_2$ and $z$, we have $|\mathcal{P}(f_1)| \leq k+1$.
% Therefore, if $k \geq 2$, then every $0$-pentagon in $\mathcal{P}(f_1)$ (including $f$) receives at least $\frac{2k-1}{3(k+1)} \geq 1/3$
% units of charge from $f_1$ in Step~6.
% If $k=1$, that is, $B_0z$ is an edge of $f_1$, then notice that the other face that shares this edge with $f_1$
% is incident to at least two original vertices.
% Therefore, $f_1$ does not contribute charge through $B_0z$ and hence $ch_5(f_1) \geq 5+1-4-1/3-2/3-2/6 = 2/3$.
% In this case $|\mathcal{P}(f_1)| \leq 2$ and therefore, as before, $f$ receives at least $1/3$ units of charge from $f_1$
% in Step~6.
\end{proof}

\begin{prop}\label{prop:2-in-1b}
Let $f$ be a $0$-pentagon such that $ch_5(f) <0$, $f$ contributes charge through $e_i$ and $e_{i+1}$ in Step~1,
and through $e_{i+3}$ in Step~3 or~5.
\begin{packed_enum}
\item Suppose that $f$ contributes charge through $e_{i+2}$ in Step~3 or 5.
Suppose further that the wedge of $t_{i+2}$ contains one $0$-quadrilateral and the wedge of $t_{i+3}$ contains no $0$-quadrilaterals.
If $A_i \in V(f_{i+4})$, then $f_{i+4}$ contributes at least $1/3$ units of charge to $f$ in Step~6.
Otherwise, if $A_i \notin V(f_{i+4})$, then $f_{i+4}$ contributes at least $1/6$ units of charge to $f$ in Step~6.
\item Suppose that $f$ contributes charge through $e_{i-1}$ in Step~3 or 5.
Suppose further that the wedge of $t_{i-1}$ contains one $0$-quadrilateral and the wedge of $t_{i+3}$ contains no $0$-quadrilaterals.
If $B_{i+1} \in V(f_{i+1})$, then $f_{i+1}$ contributes at least $1/3$ units of charge to $f$ in Step~6.
Otherwise, if $B_{i+1} \notin V(f_{i+1})$, then $f_{i+1}$ contributes at least $1/6$ units of charge to $f$ in Step~6.
\end{packed_enum}
\end{prop}

\begin{proof}
Assume without loss of generality that $i=0$.
By symmetry, it is enough to consider the first case in which $f$ contributes charge through $e_2$ in Step~3 or~5.
It follows from the maximum number of crossings per edge that the wedge
of $t_2$ contains exactly one $0$-quadrilateral.
Since $(A_1,B_1)$ has four crossings, it follows that $A_1$ and $w_0$ are vertices of $f_4$.
Note that $|f_4| \geq 4$, for otherwise if $|f_4|=3$ then this implies that $A_1=A_0$
which is impossible by Observation~\ref{obs:A_i,B_i}.

If $A_0 \in V(f_4)$ (see Figure~\ref{fig:2-in-1ee}), then it follows from Proposition~\ref{prop:step6} that $f_4$
sends at least $1/3$ units of charge to $f$ in Step~6.
\begin{figure}[ht]
    \centering
    \subfigure[If $A_0 \in V(f_4)$, then $f_4$ sends at least $1/3$ unit of charge to $f$ in Step~6.]{\label{fig:2-in-1ee}
    {\includegraphics[width=6cm]{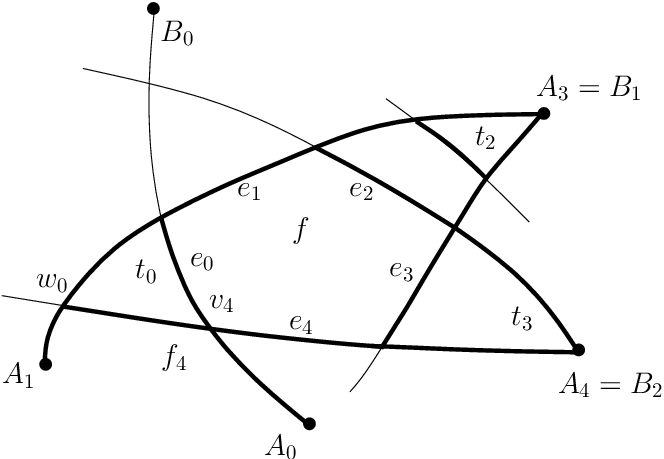}}}
	\hspace{10mm}
    \subfigure[If $A_0 \notin V(f_4)$, then $f_4$ sends at least $1/6$ unit of charge to $f$ in Step~6.]{\label{fig:2-in-1ff}
    {\includegraphics[width=6cm]{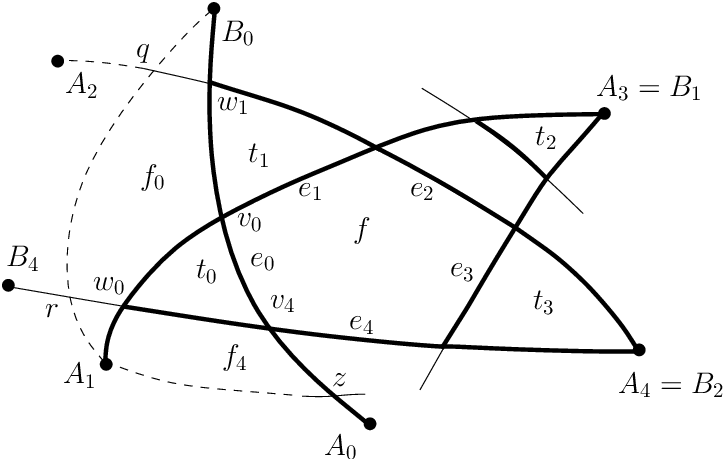}}}
	\caption{Illustrations for the proof of Proposition~\ref{prop:2-in-1b}: 
				$f$ contributes charge through $e_0$ and $e_1$ in Step~1 and
				through $e_2$ and $e_3$ in Steps~3 or~5.
				The wedge of $t_2$ contains one $0$-quadrilateral.
				The wedge of $t_3$ contains no $0$-quadrilaterals.}
	\label{fig:2-in-1bb}
\end{figure}

Suppose that $A_0 \notin V(f_4)$ and let $z$ be the crossing point on $(A_0,B_0)$ between $A_0$ and $v_4$
(see Figure~\ref{fig:2-in-1ff}).
If $f_4$ contributes charge through $v_4z$, then the recipient of this charge must be a $1$-triangle
whose neighbors are $t_3$ and a face that is incident to at least two original vertices.
Therefore $f_4$ contributes at most $1/6$ units of charge through $v_4z$.
Note that it also contributes at most $1/6$ units of charge through $A_1w_0$ and its other edge that is incident to $A_1$.
Observe also that $A_1,z \notin \mathcal{P}'(f_4)$.
Therefore, if $|f_4| \geq 5$, then $f_4$ contributes at least $\frac{|f_4|+1-4-1/3-1/3-3/6-(|f_4|-4)/3}{|f_4|-2} \geq 1/6$
units of charge to $f$ in Step~6.
%
% Let $k \geq 1$ be the number of edges on the clockwise chain from $z$ to $A_1$ on the boundary of $f_4$.
% Note that $f_4$ contributes at most $1/6$ units of charge through the edge in this chain that is incident to $A_1$
% and at most $1/3$ units of charge through every other edge.
% Thus, $ch_5(f_4) \geq 3+k+1-4-\frac{1}{3}-\frac{1}{3}-\frac{3}{6}-\frac{k-1}{3} = \frac{2k}{3}-\frac{5}{6}$.
% Note that $|\mathcal{P}(f_4)| \leq k+1$ since the vertex-neighbor of $f_4$ at $z$ is incident to $A_0$.
% Therefore, if $k \geq 2$ then every $0$-pentagon in $\mathcal{P}(f_4)$ (including $f$) receives at least $\frac{2k/3-5/6}{k+1} \geq 1/6$
% units of charge from $f_4$ in Step~6.

If $|f_4|=4$, that is, $zA_1$ is an edge of $f_4$, then notice that the other face that shares this edge with $f_4$
is incident to at least two original vertices.
Therefore, $f_4$ does not contribute charge through $zA_1$.
Observe that $f_4$ does not contribute charge through $A_1w_0$ in Step~4.
Indeed, if $f_4$ shares the edge $A_1w_0$ with a $1$-triangle $w_0A_1r$, then
since $(A_4,B_4)$ already has four crossings, the other neighbor of this $1$-triangle is incident to two original vertices.
It follows from the statement of Step~4 that the $1$-quadrilateral $f_4$ whose charge after Step~3 is $1/3$
does not contribute charge through $A_1w_0$.
Thus, $ch_5(f_4) \geq 1/6$.
Suppose that $f_4$ is a vertex-neighbor of a $0$-pentagon through $w_0$.
Then this $0$-pentagon is $f_0$.
Denote the vertices of $f_0$ by $v_0,w_0,r,q,w_1$ listed in clockwise order (see Figure~\ref{fig:2-in-1ff}).
%Denote by $q$ its fifth vertex. That is, the vertices of $f_0$ are $v_0,w_0,r,q,w_1$ (see Figure~\ref{fig:2-in-1ff}).
Then $f_0$ does not contribute charge through $rq$, since $(A_2,B_2)$ and $(A_4,B_4)$ already intersect at $A_4=B_2$.
If $f_0$ contributes charge through $qw_1$, then the recipient of this charge must be a $1$-triangle whose vertices are $q$, $B_0$ and $w_1$,
since $(A_0,B_0)$ already has four crossings.
Because one neighbor of this $1$-triangle is incident to two original vertices ($A_2$ and $B_0$),
it follows that $f_0$ contributes at most $1/6$ units of charge through $qw_1$.
For similar reasons $f_0$ also contributes at most $1/6$ units of charge through $w_0r$.
Therefore $ch_5(f_0) \geq 0$ and $f_0 \notin \mathcal{P}(f_4)$.
Since the vertex-neighbor of $f_4$ through $z$ is not a $0$-pentagon (it is incident to $A_0$),
we conclude that $\mathcal{P}(f_4)=\{f\}$ and therefore $f$ receives at least $1/6$ units of charge from $f_4$ in Step~6.
\end{proof}

Recall that we consider now the case that $f$ contributes charge through $e_0$ and $e_1$ in Step~1 and $ch_5(f)<0$.
By symmetry, we may assume without loss of generality that the charge that $f$ contributes through $e_4$
is not greater than the charge it contributes through $e_2$.
We proceed by considering the charge that $f$ contributes through $e_3$ (either $1/3$ in Step~3, $1/6$ in Step~5, or $0$).

\bigskip
\noindent\underline{Subcase 1.1:} $f$ contributes $1/3$ units of charge through $e_3$ in Step~3.
Note that the wedge of $t_3$ contains at most one $0$-quadrilateral, 
for otherwise each of $(A_2,B_2)$ and $(A_4,B_4)$ would have more than four crossings.
If the wedge of $t_3$ contains exactly one $0$-quadrilateral, then
it follows from Proposition~\ref{prop:3-non-consecutive} that $f$ receives at least
$2/3$ units of charge from $f_0$ in Step~6 and thus $ch_6(f) \geq 0$.
% Therefore, if $ch_6(f) < 0$, then $f$ must contribute charge in Step~3 through
% at least one of $e_2$ and $e_4$.
% However, in this case it also gets at least $1/3$ units of charge from $f_1$ or $f_4$,
% by Proposition~\ref{prop:2-in-1a}, and ends up with a non-negative charge.

Suppose that the wedge of $t_3$ contains no $0$-quadrilaterals, and refer to Figure~\ref{fig:2-in-1-1.1}.
\begin{figure}[ht]
     \centering
     \subfigure[Subcase 1.1: $f$ contributes charge through $e_3$ in Step~3. If the wedge of $t_3$ contains no $0$-quadrilaterals,
				then $f$ receives at least $1/6$ units of charge from $f_4$ in Step~6.]{\label{fig:2-in-1-1.1}
     {\includegraphics[width=6cm]{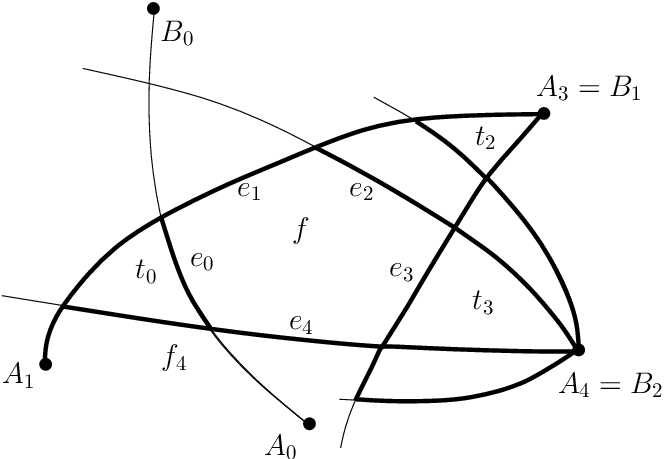}}}
 	\hspace{10mm}
     \subfigure[Subcase 1.2: $f$ contributes charge through $e_3$ in Step~5. If $f$ contributes charge through $e_2$ and $e_4$ in Step~3, 
				and through $e_3$ in Step~5, then it receives at least $1/3$ units of charge from each of $f_4$ and $f_1$ in Step~6.]{\label{fig:2-in-1g}
     {\includegraphics[width=6cm]{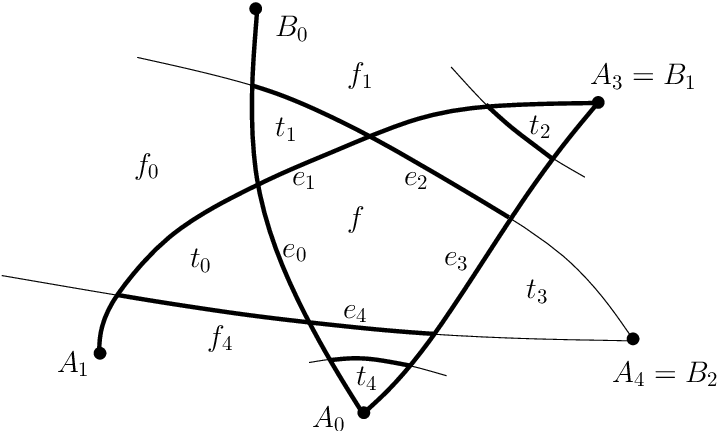}}}
 	\caption{Illustration for Case~1 in the proof of Lemma~\ref{lem:ch_6-0-pentagon-1/3}: 
 				$f$ contributes charge through $e_0$ and $e_1$ in Step~1.}
 	\label{fig:2-in-1-1.1-2}
\end{figure}
Then by Proposition~\ref{prop:e_1-e_2} $f$ does not contribute charge through neither $e_2$ nor $e_4$ in Step~3.
Therefore, $f$ must contribute $1/6$ units of charge through $e_2$ in Step~5.
Observe that is this case the wedge of $t_2$ must contain exactly one $0$-quadrilateral.
% If the wedge of $t_2$ contains no $0$-quadrilaterals, then $f_2$ is incident to
% at least two original vertices ($A_3=B_1$ and $A_4=B_2$).
% However, since $f_2$ is also a neighbor of $t_3$, it must be a $1$-triangle.
% Therefore, the wedge of $t_2$ contains exactly one $0$-quadrilateral (two $0$-quadrilaterals would imply
% that $(A_1,B_1)$ and $(A_3,B_3)$ are crossed more than four times).
Thus, by Proposition~\ref{prop:2-in-1b}, $f$ receives at least $1/6$ units of charge from $f_4$ in Step~6.
% It follows that $A_1 \in V(f_4)$.
% \begin{figure}[ht]
%     \centering
%     \subfigure[If $A_0 \in V(f_4)$, then $f_4$ sends at least $1/3$ unit of charge to $f$ in Step~6.]{\label{fig:2-in-1e}
%     {\includegraphics[width=6cm]{2-in-1e}}}
% 	\hspace{10mm}
%     \subfigure[If $A_0 \notin V(f_4)$, then $f_4$ sends at least $1/6$ unit of charge to $f$ in Step~6.]{\label{fig:2-in-1f}
%     {\includegraphics[width=6cm]{2-in-1f}}}
% 	\caption{Illustrations for Subcase~1.1 in the proof of Lemma~\ref{lem:ch_6-0-pentagon-1/3}: 
% 				$f$ contributes charge through $e_0$ and $e_1$ in Step~1 and
% 				through $e_3$ in Step~3. The wedge of $t_3$ contains no $0$-quadrilaterals.}
% 	\label{fig:2-in-1-1.1}
% \end{figure}

% If $A_0 \in V(f_4)$ (see Figure~\ref{fig:2-in-1e}), then it follows from
% Proposition~\ref{prop:step6} that $f$ receives at least $1/3$ units of charge from $f_4$
% in Step~6, and therefore $ch_6(f) \geq 0$.

If $f$ also contributes $1/6$ units of charge through $e_4$ in Step~5,
then by Proposition~\ref{prop:2-in-1b} it receives at least $1/6$ units of charge also from $f_1$ in Step~6.
Therefore, $ch_6(f) \geq 0$.

\bigskip
\noindent\underline{Subcase 1.2:} $f$ contributes $1/6$ units of charge through $e_3$ in Step~5.
Since $ch_5(f)<0$ and we have assumed that the amount of charge that $f$ contributes
through $e_2$ is at least the amount of charge it contributes through $e_4$,
it follows that $f$ contributes at least $1/6$ units of charge through $e_2$.

If $f$ contributes charge through $e_2$ in Step~3, then
it follows from Proposition~\ref{prop:2-in-1a} that $f$ receives at least $1/3$ units of charge
from either $f_4$ or $f_1$ in Step~6.
Therefore, if $f$ contributes at most $1/6$ units of charge through $e_4$,
then it ends up with a non-negative charge.

If $f$ also contributes charge through $e_4$ in Step~3,
then it follows from Proposition~\ref{prop:e_1-e_2} that $A_0 \notin f_4$ and $B_1 \notin f_1$ (see Figure~\ref{fig:2-in-1g}).
Therefore, it follows from Proposition~\ref{prop:2-in-1a} that
$f$ receives at least $1/3$ units of charge from each of $f_4$ and $f_1$,
and therefore $ch_6(f) \geq 0$.
% \begin{figure}[ht]
%     \centering
%     \includegraphics[width=6cm]{2-in-1g}
% 	\hspace{10mm}
% 	\caption{An illustration for Subcase~1.2 in the proof of Lemma~\ref{lem:ch_6-0-pentagon-1/3}: 
% 				$f$ contributes charge through $e_0$ and $e_1$ in Step~1, through $e_2$ and $e_4$ in Step~3, 
% 				and through $e_3$ in Step~5.}
% 	\label{fig:2-in-1g}
% \end{figure}

It remains to consider the case that $f$ contributes $1/6$ through each of $e_2$ and $e_4$.
If the wedge of $t_3$ contains one $0$-quadrilateral, then it follows from
Proposition~\ref{prop:3-non-consecutive} that $f$ receives at least $2/3$ units
of charge from $f_0$ and therefore $ch_6(f) \geq 0$.
Assume therefore that that wedge of $t_3$ contains no $0$-quadrilaterals.
If the wedges of $t_2$ and $t_4$ also contain no $0$-quadrilaterals,
then each of the neighbors of $t_3$ is incident to at least two original vertices,
and so $f$ does not contribute charge to $t_3$.
Assume, without loss of generality, that the wedge of $t_2$ contains one $0$-quadrilateral
(note that two $0$-quadrilateral would imply more than four crossings for $(A_1,B_1)$).
By Proposition~\ref{prop:2-in-1b} $f$ receives at least $1/6$ units of charge from $f_4$
and so $ch_6(f) \geq 0$.

\bigskip
\noindent\underline{Subcase 1.3:} $f$ does not contribute charge through $e_3$.
Since $ch_5(f) < 0$ and the amount of charge that $f$ contributes
through $e_2$ is at least the amount of charge it contributes through $e_4$,
$f$ must contribute charge through $e_2$ in Step~3.
However, by Proposition~\ref{prop:2-in-1a} $f$ receives at least $1/3$ units of charge
from $f_1$ or $f_4$ in Step~6 and so $ch_6(f) \geq 0$.

\bigskip
\noindent\underline{Case 2:} $f$ contributes charge through $e_0$ and $e_2$ in Step~1.
Note that since $(A_1,B_1)$ has four crossings (because it supports $f$ and two $0$-triangles),
it follows that $e_0$ and $e_2$ are edges of $t_0$ and $t_2$, respectively.
Since $ch_5(f)<0$, $f$ must contribute charge through $e_3$ or $e_4$.
By symmetry we may assume without loss of generality that the amount of charge that
$f$ contributes through $e_3$ is at least the amount of charge it contributes through $e_4$.
Therefore, it is enough to consider the following three subcases:
$f$ contributes $1/3$ units of charge through $e_3$; 
$f$ contributes $1/3$ units of charge through $e_1$ and $1/6$ units of charge through $e_3$; and
$f$ contributes $1/6$ units of charge through each of $e_1$, $e_3$ and $e_4$.

\bigskip
\noindent\underline{Subcase 2.1:} $f$ contributes $1/3$ units of charge through $e_3$.
It follows from Proposition~\ref{prop:e_1-e_2} and the maximum number of crossings per edge 
that the wedge of $t_3$ contains exactly one $0$-quadrilateral and that $B_1 \in V(f_2)$.
Therefore, by Proposition~\ref{prop:2/3} $f$ receives at least $2/3$ units of charge from $f_2$
and ends up with a non-negative charge.

\bigskip
\noindent\underline{Subcase 2.2:} $f$ contributes $1/3$ units of charge through $e_1$ and $1/6$ units of charge through $e_3$.
It follows from Proposition~\ref{prop:e_1-e_2} that the wedge of $t_1$ contains at least one $0$-quadrilateral.
Suppose that it contains two $0$-quadrilaterals. 
Then $\{A_0,A_1\} \subseteq V(f_4)$ and $\{B_1,B_2\} \subseteq f_2$ (see Figure~\ref{fig:2-in-1ia}).
Therefore, by Proposition~\ref{prop:step6} $f$ receives at least $1/3$ units of charge from each of $f_4$ and $f_2$ in Step~6
and so $ch_6(f) \geq 0$.
Assume therefore that the wedge of $t_1$ contains exactly one $0$-quadrilateral.

If the wedge of $t_3$ contains no $0$-quadrilaterals, then $\{B_1,B_2\} \subseteq V(f_2)$.
% It follows that $f$ contributes at most $1/6$ units of charge through $e_3$ and $e_4$
% since by Proposition~\ref{prop:e_1-e_2} $f$ does not contribute charge to $t_3$ in Step~3.
Therefore, $f$ receives at least $1/3$ units of charge from $f_2$ by Proposition~\ref{prop:step6}, and hence $ch_6(f) \geq 0$.

Assume therefore that the wedge of $t_3$ contains exactly one $0$-quadrilateral,
and consider the face $f_0$ (refer to Figure~\ref{fig:2-in-1ib}).
Observe that $x_1$, $x'_1$, $v_0$, $w_0$ and $B_4$ are vertices of $f_0$.
%and that $|f_0| \geq 5$, since the two neighbors of $t_1$ are $1$-triangles.
%Denote by $z$ the vertex of $f_0$ that precedes $x_1$ in a clockwise order of the vertices of $f_0$.
\begin{figure}[ht]
    \centering
    \subfigure[If the wedge of $t_1$ contains two $0$-quadrilaterals,
				then $f$ receives at least $1/3$ units of charge from each of $f_4$ and $f_2$.]{\label{fig:2-in-1ia}
    {\includegraphics[width=6cm]{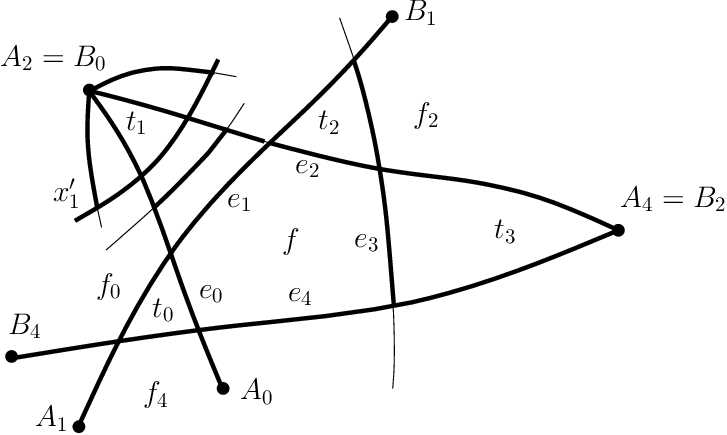}}}
	\hspace{5mm}    
    \subfigure[If the wedges of $t_1$ and $t_3$ contain one $0$-quadrilateral,
				then $f$ receives at least $1/3$ units of charge from $f_0$.]{\label{fig:2-in-1ib}
    {\includegraphics[width=6cm]{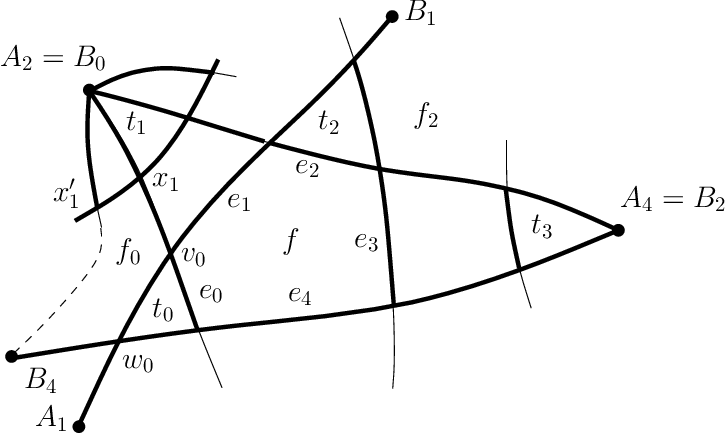}}}
	\caption{Illustrations for Subcase 2.2 in the proof of Lemma~\ref{lem:ch_6-0-pentagon-1/3}: 
				$f$ contributes charge through $e_0$ and $e_2$ in Step~1, 
				through $e_1$ in Step~3, and through $e_3$ in Step~5.}
	\label{fig:2-in-1i}
\end{figure}
Note that $f_0$ contributes no charge through $w_0B_4$ (since its immediate neighbor at this edge is incident to $B_4$ and $A_1$) and at most $1/6$ units of charge through each of $x'_1x_1$ and $x_1v_0$ (by Proposition~\ref{prop:1-triangle-neighbor})
and its other edge that is incident to $B_4$.
% and at most $1/3$ units of charge through $x_1v_0$ and $v_0w_0$.
% Observe also that $f_0$ cannot be a vertex-neighbor of a $0$-pentagon through $B_4$, $z$, $x_1$, and $w_0$.
Observe also that $B_4,w_0,x_1,x'_1 \notin \mathcal{P}'(f_0)$.
Therefore, $f_0$ contributes at least $\frac{|f_0|+1-4-1/3-1/3-3/6-(|f_0|-5)/3}{|f_0|-4} \geq 1/3$ units of charge to $f$ in Step~6.
% Let $k \geq 1$ be the number of edges on the clockwise chain from $B_4$ to $z$ on the boundary of $f_0$.
% Thus, $|f_0|=4+k$.
% Then $ch_5(f_0) \geq 4+2k/3+1-4-1/3-2/3-1/6 = (4k-1)/6$.
% Note that $|\mathcal{P}(f_2)| \leq k$ since $f_0$ is not a vertex-neighbor of a $0$-pentagon at $B_4$, $z$, $x_1$, and $w_0$.
% Therefore, every $0$-pentagon in $\mathcal{P}(f_0)$ (including $f$) receives at least $\frac{4k-1}{6k} \geq 1/3$
% units of charge from $f_0$ in Step~6.
Since $f$ contributes at most $1/6$ units of charge through $e_3$ and $e_4$, we have $ch_6(f) \geq 0$.
% If $f$ contributes $1/3$ units of charge through $e_3$, then it follows from
% Proposition~\ref{prop:2-in-1x} that it receives at least $1/3$ units of charge
% from $f_2$ as well, and therefore $ch_6(f) \geq 0$.
%
% \bigskip
% \noindent\underline{Subcase 2.2:} $f$ contributes $1/6$ units of charge through $e_1$ and $1/3$ units of charge through $e_3$.
% In this case, it follows from Proposition~\ref{prop:2-in-1x} that $f$ receives at least $1/3$ units of charge from $f_2$ in Step~6.
% If $f$ also contributes $1/3$ units of charge through $e_4$, then it also receives at least $1/3$ units of charge from $f_4$.
% Therefore, $ch_6(f) \geq 0$.

\bigskip
\noindent\underline{Subcase 2.3:} $f$ contributes $1/6$ units of charge through each of $e_1$, $e_3$ and $e_4$.
Recall that the wedges of $t_0$ and $t_2$ contain no $0$-quadrilaterals.
It follows that $A_1 \in V(f_4)$ and $B_1 \in V(f_2)$. 
If the wedge of $t_3$ contains no $0$-quadrilaterals, then $f_2$ is also incident to $A_4$ and therefore
contributes at least $1/3$ units of charge to $f$ in Step~6 by Proposition~\ref{prop:step6}.
Therefore, assume that the wedge of $t_3$ contains at least one $0$-quadrilateral.
For similar reasons, we may assume that the wedges of each of $t_4$ and $t_1$ contain at least one $0$-quadrilateral,
for otherwise $f_4$ or $f_0$ contributes at least $1/3$ units of charge to $f$ in Step~6.
It follows that each of the wedges of $t_1$, $t_3$ and $t_4$ contains exactly one $0$-quadrilateral (see Figure~\ref{fig:2-in-1q}).
\begin{figure}[ht]
    \centering
    \includegraphics[width=6cm]{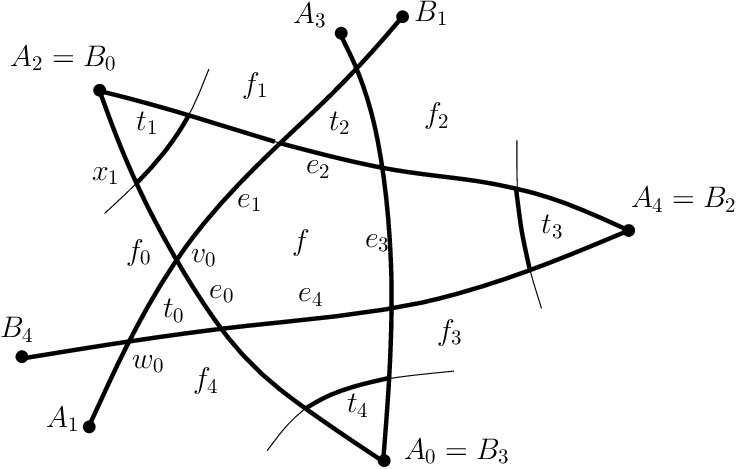}
	\caption{An illustration for Subcase 2.3 in the proof of Lemma~\ref{lem:ch_6-0-pentagon-1/3}: 
				$f$ contributes charge through $e_0$ and $e_2$ in Step~1, 
				and through $e_1$, $e_3$ and $e_4$ in Step~5.}
	\label{fig:2-in-1q}
\end{figure}
Note that each of $f_0$ and $f_1$ is of size at least four.
However, not both of them can be of size four, since then each neighbor of $t_1$ would be incident to two original vertices
and then $t_1$ would not get any charge from $f$.
Assume without loss of generality that $|f_0| \geq 5$ and observe that $f_0$
contributes no charge through $x_1v_0$ and at most $1/6$ units of charge through each of its edges that are incident to $B_4$.
Note also that $B_4,w_0,x_1 \notin \mathcal{P}'(f_0)$.
Therefore, $f_0$ contributes at least $\frac{|f_0|+1-4-1/3-1/3-2/6-(|f_0|-4)/3}{|f_0|-3} \geq 1/6$ units of charge to $f$ in Step~6
and thus $ch_6(f) \geq 0$.
This concludes the last subcase and the proof of Lemma~\ref{lem:ch_6-0-pentagon-1/3}.
\end{proof}

The following claim will be useful when considering $0$-pentagons
that contribute charge to at least two $1$-triangles in Step~3.

\begin{prop}\label{prop:e_1-e_2-step3}
Let $f$ be a $0$-pentagon that contributes charge in Step~3 through $e_i$ and $e_{i+1}$, for some $0 \leq i \leq 4$,
such that the wedges of $t_i$ and $t_{i+1}$ each contain exactly one $0$-quadrilateral.
If $ch_5(f)<0$ and $f_i$ is not a $0$-quadrilateral, then $f$ receives at least $1/3$ units of charge from $f_i$ in Step~6.
\end{prop}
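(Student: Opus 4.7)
My plan is to mirror the strategy of the earlier propositions in this section (notably Propositions~\ref{prop:3-non-consecutive}, \ref{prop:2-in-1a}, and \ref{prop:2-in-1x}): pin down the local combinatorial structure of $f_i$ forced by the two Step~3 contributions through consecutive edges, deduce that $|f_i|\geq 4$ and $|V(f_i)|\geq 2$, and then invoke Corollary~\ref{cor:step6-2}, noting that $f\in B(f_i)$ since $f$ is a $0$-pentagon vertex-neighbor of $f_i$ with $ch_5(f)<0$.

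Without loss of generality I will take $i=0$. The wedge hypothesis identifies two $0$-quadrilaterals $q_0$ and $q_1$ sharing edges $e_0$ and $e_1$ with $f$ respectively: $q_0$ has an additional vertex $a_0$ on $(A_1,B_1)$ between $A_1=B_4$ and $v_0$, and $q_1$ has an additional vertex $c_1$ on $(A_0,B_0)$ between $v_0$ and $B_0=A_2$. The face $f_0$ is therefore bounded at $v_0$ by the edges $v_0a_0$ (shared with $q_0$) and $v_0c_1$ (shared with $q_1$), so in particular $\{v_0,a_0,c_1\}\subseteq V(f_0)$.

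To rule out $|f_0|=3$: were $f_0$ the triangle on $\{v_0,a_0,c_1\}$, the segment $a_0c_1$ would be an edge of $M(G)$ lying on a common edge of $G$ through $a_0$ and $c_1$. By Observation~\ref{obs:A_i,B_i}, the only option is that the third edges $E_0$ of $q_0$ and $E_1$ of $q_1$ coincide in a single edge $E$, with $a_0$ and $c_1$ adjacent along $E$. Since $t_0$ received charge in Step~3, its neighbor across $A_1a_0$ is a $1$-triangle, which together with the adjacency of $a_0$ and $c_1$ on $E$ forces its third vertex to be $c_1$, and hence the edge $A_1c_1$ to lie on $E$ with $A_1$ an endpoint of $E$. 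But the three edges $A_1c_1$, $a_0c_1$, and $c_1d_1$ of $M(G)$ then make $c_1$ adjacent to each of $A_1$, $a_0$, and $d_1$ along $E$, which is impossible along a simple curve. Hence $|f_0|\geq 4$.

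For $|V(f_0)|\geq 2$: I would continue the boundary of $f_0$ past $a_0$ along $E_0$ to the third vertex $z_0$ of the $1$-triangle neighbor of $t_0$ at $A_1a_0$; here $z_0$ is a crossing of $E_0$ with a second edge $e^{\star}$ of $G$ incident to $A_1$, and the boundary of $f_0$ next follows $e^{\star}$ away from $A_1$. A symmetric walk past $c_1$ follows $E_1$ and then an edge $e^{\star\star}$ of $G$ incident to $A_2$. Each walk must eventually reach an original vertex of $G$ before the boundary of $f_0$ closes, and these two vertices are distinct by Observation~\ref{obs:A_i,B_i}. This second substep is the main obstacle: ruling out all degenerate shapes of $f_0$ (in particular $f_0$ being a $0$-quadrilateral, which would force $z_0$ to lie on three edges of $G$, contradicting that each crossing is on exactly two) requires a careful case analysis based on the four-crossing-per-edge bound and on whether the edges $E_0,E_1,e^{\star},e^{\star\star}$ coincide. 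Once $|f_0|\geq 4$ and $|V(f_0)|\geq 2$ are in hand, Corollary~\ref{cor:step6-2} yields the conclusion.
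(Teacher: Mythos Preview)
Your plan has a genuine gap: the claim that $|V(f_i)|\geq 2$ is not established, and in fact need not hold. Your justification --- ``each walk must eventually reach an original vertex of $G$ before the boundary of $f_0$ closes'' --- is simply false for face boundaries in $M(G)$: a face boundary is a simple cycle of edge-segments and can close up entirely on crossing points (this is precisely what happens for $0$-pentagons). After your vertex $z_0$, the boundary of $f_0$ follows $e^{\star}$ \emph{away} from the original endpoint, and there is nothing forcing it to reach an original vertex before meeting the symmetric walk from the other side. The paper explicitly treats the case where $f_i$ is a $0$-pentagon (five crossing-point vertices and no original vertex), so your intended invocation of Corollary~\ref{cor:step6-2} cannot go through.

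The paper's argument is structurally different from the earlier propositions you are trying to mimic. Rather than locating two original vertices on $f_i$, it shows $|f_i|\geq 5$ (your chain $c_1, v_0, a_0, z_0$ extends symmetrically to a fifth vertex) and then bounds the charge $f_i$ loses through each edge individually: the four edges $v_i y_i$, $v_i x_{i+1}$, $y_i p$, $x_{i+1} q$ each carry at most $1/6$ (using that the relevant $1$-triangles have a neighbor incident to two original vertices, or that certain edges of $G$ already have four crossings), and when $|f_i|=5$ the remaining edge $pq$ carries nothing. Combined with the fact that $f_i$ has no $0$-pentagon vertex-neighbor at $p,q,x_{i+1},y_i$, this controls $ch_5(f_i)/|B(f_i)|$ directly without any appeal to $|V(f_i)|$. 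You should abandon the $|V(f_i)|\geq 2$ route and instead carry out this edge-by-edge charge accounting.
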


\begin{proof}
Assume without loss of generality that $i=1$ and refer to Figure~\ref{fig:e_1-e_2-step3}.
\begin{figure}[ht]
    \centering
    \includegraphics[width=5.5cm]{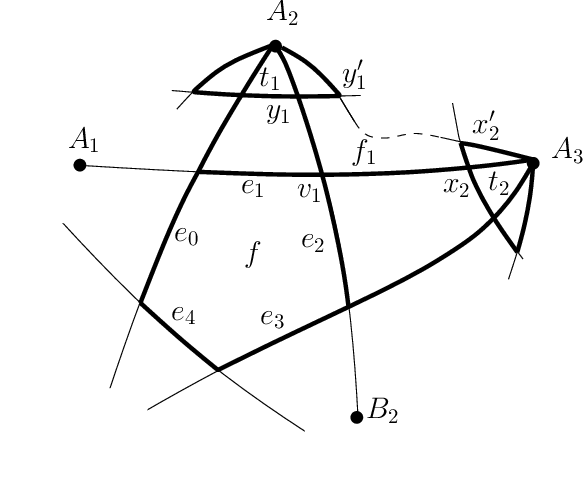}
	\caption{An illustration for the proof of Proposition~\ref{prop:e_1-e_2-step3}:
			$t_1$ and $t_2$ receive charge from $f$ in Step~3 and each of their wedges contains exactly one $0$-quadrilateral.}
	\label{fig:e_1-e_2-step3}
\end{figure}
Since $t_1$ and $t_2$ receive charge in Step~3, both of their neighbors are $1$-triangles.
% Let $A_2y_1p$ be the neighbor of $t_1$ that shares an edge with $f_1$ and
% let $A_3x_2q$ be the neighbor of $t_2$ that shares an edge with $f_1$
% (each of these triangles shares an edge with $f_1$ since each of the wedges of $t_1$ and $t_2$ contains exactly one $0$-quadrilateral).
Note that $y_1,y'_1,v_1,x_2,x'_2 \in V(f_1)$ and that $y'_1 \neq x'_2$ since $f_1$ is not a $0$-quadrilateral.
Observe that $f_1$ contributes at most $1/6$ units of charge through each of $v_1y_1$, $y_1y'_1$, $x'_2x_2$ and $x_2v_1$ by Proposition~\ref{prop:1-triangle-neighbor}.
Note that $f_1$ is not a vertex-neighbor of a $0$-pentagon at the vertices $x'_2$, $x_2$, $y_1$ and $y'_1$.
Therefore, if $|f_1| \geq 6$, then $f_1$ contributes at least $\frac{|f_1|-4-4/6-(|f_1|-4)/3}{|f_1|-4} \geq 1/3$
units of charge to $f$ in Step~6.

If $|f_1|=5$, then $f_1$ does not contribute charge through $y'_1x'_2$.
Indeed, each of the edges of $G$ that contain $y_1y'_1$ and $x'_2x_2$ already has four crossings,
and this implies that if the face that shares $y'_1x'_2$ with $f_1$ is a $1$-triangle (it cannot be a $0$-triangle or a $0$-quadrilateral),
then both of its neighbors are incident to two original vertices
and therefore this $1$-triangle gets its missing charge from them.
Therefore, $ch_5(f_1) \geq 1/3$ and $f_1$ sends all its extra charge to $f$ in Step~6.
%
% If $|f_1| \geq 6$ then the clockwise chain from $p$ to $q$ contains $|f_1|-4$ edges 
% and at most $|f_1|-5$ vertices through which $f_1$ might contribute charge in Step~6.
% Since $f_1$ contributes at most $1/6$ units of charge through each of $py_1,v_1y_1,v_1x_2,qx_2$
% and at most $1/3$ units of charge through each other edge, we have $ch_5(f_1)\geq |f_1|-4-4/6-(|f_1|-4)/3$.
% Therefore every face in $\mathcal{P}(f_1)$ receives at least
% $\frac{2|f_1|/3-10/3}{|f_1|-4} \geq 1/3$ units of charge from $f_1$ in Step~6.
% If $|f| > 5$ then every `extra' edge might add a $0$-pentagon to $\mathcal{P}(f_1)$ and $f_1$ might contribute at most $1/3$ through it,
% but it also increases the initial charge of $f_1$ by one, and therefore, still, every $0$-pentagon in $\mathcal{P}(f_1)$
% (including $f$) will receive at least $1/3$ units of charge from $f_1$ in Step~6.
\end{proof}

\begin{cor}\label{cor:f_1-or-f_2}
Let $f$ be a $0$-pentagon that contributes charge in Step~3 through $e_i$, $e_{i+1}$ and $e_{i+2}$, for some $0 \leq i \leq 4$,
such that the wedge of $t_{i+1}$ contains exactly one $0$-quadrilateral.
If $ch_5(f)<0$, then $f$ receives at least $1/3$ units of charge from $f_i$ or $f_{i+1}$ in Step~6.
\end{cor}

\begin{proof}
We may assume without loss of generality that $i=1$.
It follows from Proposition~\ref{prop:e_1-e_2} that each of the wedges of $t_1$, $t_2$ and $t_3$
contains at least one $0$-quadrilateral.
Moreover, each of the wedges of $t_1$ and $t_3$ must contain exactly one $0$-quadrilateral,
for otherwise $(A_2,A_4)$ would have more than four crossings.
If $f_1$ (resp., $f_2$) is not a $0$-quadrilateral, then it follows from Proposition~\ref{prop:e_1-e_2-step3}
that $f$ receives at least $1/3$ units of charge from this face in Step~6.
Suppose therefore that both $f_1$ and $f_2$ are $0$-quadrilaterals and let $f'$ be the face that shares an edge
with each of them and also shares $e_2$ with $f$ (see Figure~\ref{fig:f_1-or-f_2}).
\begin{figure}[ht]
    \centering
	\includegraphics[width=5cm]{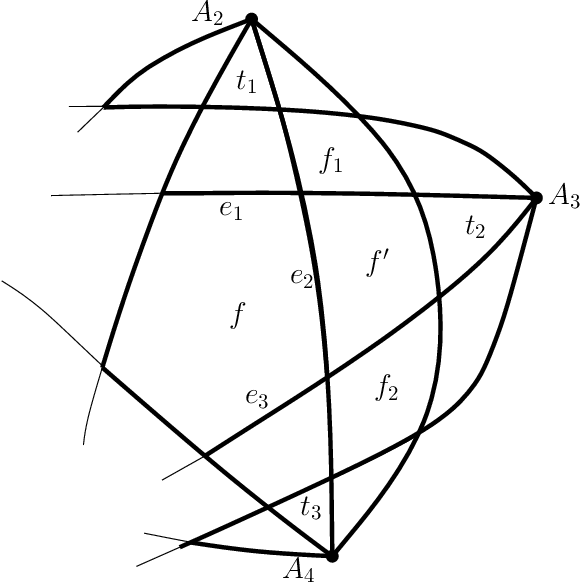}
	\caption{Illustration for the proof of Corollary~\ref{cor:f_1-or-f_2}: $f$ contributes charge through $e_1$, $e_2$ and $e_3$ in Step~3,
			and the wedge of $t_2$ contains exactly on $0$-quadrilateral.
			Then it is impossible that both $f_1$ and $f_2$ are $0$-quadrilaterals.}
	\label{fig:f_1-or-f_2}
\end{figure}
Then $f'$ must be a $0$-quadrilateral since $f$ sends charge to $t_2$ as well.
However, this implies that there are two parallel edges between $A_2$ and $A_4$ in $G$.
\end{proof}

\begin{lem}\label{lem:ch_6-0-pentagon-2/3}
Let $f$ be a $0$-pentagon such that $ch_1(f)=2/3$ and $ch_5(f)<0$. Then $ch_6(f) \geq 0$.
\end{lem}

\begin{proof}
Assume without loss of generality that $f$ contributes $1/3$ units of charge in Step~1 to $t_1$ through $e_1$.
There are three cases to consider, based on whether $f$ contributes $1/3$ units of charge
to exactly one, exactly two or at least three $1$-triangles in Step~3.

\smallskip\noindent\underline{Case 1:} $ch_3(f)=1/3$ and $ch_5(f) = -1/6$.
That is, $f$ contributes $1/3$ units of charge to exactly one $1$-triangle $t'$ in Step~3,
and $1/6$ units of charge to three $1$-triangles in Step~5.
We need to show that $f$ receives at least $1/6$ units of charge from its vertex-neighbors in Step~6.
Without loss of generality we may assume that either $t'=t_2$ or $t'=t_3$.

\smallskip\noindent\underline{Subcase 1.1:} $f$ sends $1/3$ units of charge to $t_2$ in Step~3.
It follows from Proposition~\ref{prop:e_1-e_2} that the wedge of $t_2$ contains at least one $0$-quadrilateral.
We observe first that the wedge of $t_2$ cannot contain two $0$-quadrilaterals.
Indeed, suppose it does and refer to Figure~\ref{fig:1-in-1-1-in-3_0}.
\begin{figure}[ht]
    \centering
    \subfigure[If the wedge of $t_2$ contains two $0$-quadrilaterals,
				then $(A_2,B_2)$ has five crossings.]{\label{fig:1-in-1-1-in-3_0}
    {\includegraphics[width=5cm]{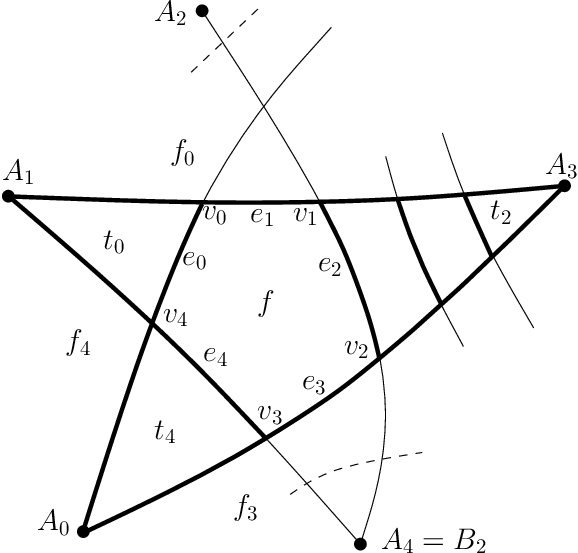}}}
    \hspace{10mm}
    \subfigure[If $e_1$ is not an edge of $t_1$, then $t_3$ receives $1/6$ units of charge
				from each of its neighbors in Step~4.]{\label{fig:1-in-1-1-in-3_1}
    {\includegraphics[width=5cm]{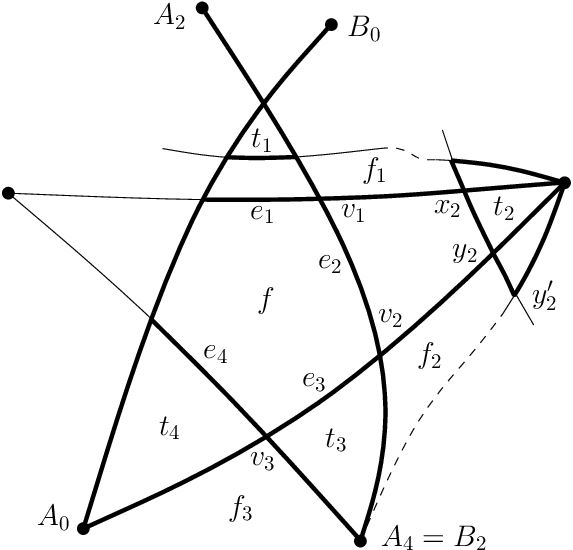}}}
%     \hspace{10mm}
%     \subfigure[If $A_2w_1$ is not an edge in $M(G)$, then $e_3$ is an edge of $t_3$,
%                and $f_2$ contributes at least $1/6$ units of charge to $f$.]{\label{fig:1-in-1-1-in-3_2}
%     {\includegraphics[width=5cm]{1-in-1-1-in-3_2a}}}
    \hspace{10mm}
%     \subfigure[$A_2w_1$ is an edge in $M(G)$ and $|f_1|=5$. $f_1$ cannot send charge through $pq$ and $w_1q$.]{\label{fig:1-in-1-1-in-3_3}
%     {\includegraphics[width=5cm]{1-in-1-1-in-3_3}}}
    \subfigure[$A_2w_1$ is an edge in $M(G)$ and $B_0w_1$ is not.]{\label{fig:1-in-1-1-in-3_3k}
    {\includegraphics[width=5cm]{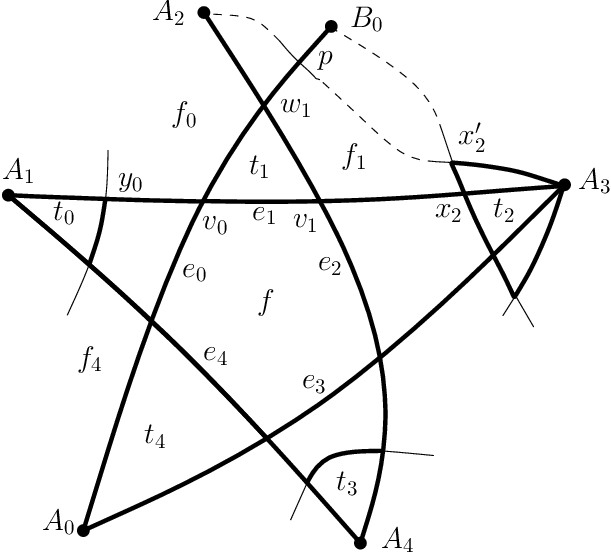}}}
	\caption{Subcase 1.1 in the proof of Lemma~\ref{lem:ch_6-0-pentagon-2/3}: $f$ sends $1/3$ units of charge to $t_1$ in Step~1,
			$1/3$ units of charge to $t_2$ in Step~3, and $1/6$ units of charge to $t_0$, $t_3$ and $t_4$ in Step~5.}
	\label{fig:1-in-1-1-in-3}
\end{figure}
Since each of the edges $(A_1,A_3)$ and $(A_3,A_0)$ has four crossings it follows that $e_0$ is an edge of $t_0$
and $e_4$ is an edge of $t_4$.
Thus $f_4$ is incident to $A_0$ and $A_1$ and is a neighbor of $t_0$ and $t_4$.
Therefore the other neighbor of $t_0$ (resp., $t_4$) cannot be incident to
any other original vertex but $A_1$ (resp., $A_0$).
This implies that $(A_2,B_2)$ has two additional crossings,
and totally five crossings.

Next, we observe that we may assume that $e_1$ is an edge of $t_1$.
Indeed, suppose it is not and refer to Figure~\ref{fig:1-in-1-1-in-3_1}.
Since each of the edges $(A_2,B_2)$ and $(A_0,B_0)$ has four crossings, $e_3$ is an edge of $t_3$ and $e_4$ is an edge of $t_4$.
It follows from Proposition~\ref{prop:1/6} that $f_2$ sends at least $1/6$ units of charge to $f$ in Step~6.
% It follows that the neighbors of $t_3$ are $f_2$ and $f_3$.
% Since $f_3$ is incident to $A_0$ and $B_2$, it follows from Observation~\ref{obs:ch_3} that it contributes charge to $t_3$ in Step~4.
% Consider $f_2$, the other neighbor of $t_3$ and observe that it also contributes charge to $t_3$ in Step~4.
% Indeed, if it does not, then since $A_4 \in V(f_2)$, it must be a $1$-quadrilateral with $ch_3(f_2)=1/3$
% (note that $f_2$ is incident to $A_4$, $v_2$, $y_2$ and $y'_2$).
% However, by Proposition~\ref{prop:1-triangle-neighbor} $f_2$ does not contribute charge through $v_2y_2$ and $y_2y'_2$ in Steps~1 or~3,
% and therefore $ch_3(f_2) \geq 2/3$.
% This implies that $f_2$ contributes charge through $v_2y_2$ in Steps~1 or~3,
% since it cannot contribute charge in these steps through its other edge with endpoints that are crossing points in $G$ 
% by Observation~\ref{obs:1-triangle-neighbor}.
% However, if $f_2$ contributes charge through $v_2y_2$ in Steps~1 or~3,
% then it follows that $f_1$ must be a $0$-quadrilateral.
% But then the edge of $f_1$ that is opposite to $v_1x_2$ is incident to a face of size at least four which is also incident to $B_0$.
% Therefore, $f_2$ contributes charge to $t_3$ in Step~4, and thus $f$ does not contribute charge to $t_3$, a contradiction.
% Hence, $e_1$ is an edge of $t_1$.

We assume therefore that $e_1$ is an edge of $t_1$ and that
the wedges of $t_2$ and $t_3$ contain exactly one $0$-quadrilateral.
Observe that this implies that $|f_1| \geq 5$.
Indeed, if $|f_1|=4$, then $B_0$ and $B_1$ must coincide, which is impossible.
If $B_0 \in V(f_1)$, then by Proposition~\ref{prop:2/3} $f_1$ sends at least $2/3$ unit of charge to $f$ in Step~6.
%
% Consider now the edge-segment $A_2w_1$.
% Suppose that it contains a crossing point between $A_2$ and $w_1$ and refer to Figure~\ref{fig:1-in-1-1-in-3_2}.
% It follows that $e_3$ is an edge of $t_3$ and that $f_2$ is a neighbor of $t_3$.
% Since $|f_1| \geq 5$, $f_2$ does not contribute charge through $v_2y_2$.
% Let $q$ be the vertex of $f_2$ that follows $y_2$ in a clockwise order of the vertices on the boundary of $f_2$.
% Note that $f_2$ contributes at most $1/6$ units of charge through each of $y_2q$ and the two edges that are incident to $A_4$.
% Since $f_2$ is not a vertex-neighbor of a $0$-pentagon at $A_4$, $y_2$ and $q$, it follows that
% $f_2$ contributes at least $\frac{|f_2|+1-4-1/3-3/6-(|f_2|-4)/3}{|f_2|-3}\geq 1/6$ units of charge to every face in $\mathcal{P}(f_2)$ (including $f$) in Step~6.
% If $|f_2|=4$, then $f_2$ does not contribute charge through $qA_4$, and therefore $ch_5(f_2) \geq 1/3$.
% In this case $f_2$ sends at least $1/6$ units of charge to $f$ in Step~6.
% It is also not hard to see that this remains true if $|f_2| \geq 5$. 
%Therefore, $ch_6(f) \geq 0$ if $A_2w_1$ is not an edge of $M(G)$.
%
% Suppose that $A_2w_1$ is an edge of $M(G)$.
% If $B_0w_1$ is also an edge of $M(G)$, then $B_0$ is a vertex of $f_1$.
% Note that in this case $f_1$ is not a vertex-neighbor of a $0$-pentagon at $B_0$, $x_2$ and $w_1$.
% Recalling that $|f_1| \geq 5$, it follows that $f_1$ contributes at least $\frac{|f_1|-4+1-1/3-2/6-(|f_1|-2)/3}{|f_1|-3} \geq 1/6$ units of charge
% to $f$ in Step~6 and so $ch_6(f) \geq 0$.
%
Suppose therefore that there is a crossing point $p$ between $B_0$ and $w_1$ on $B_0w_1$, and refer to Figure~\ref{fig:1-in-1-1-in-3_3k}.
$f_1$ contributes at most $1/6$ units of charge through $x'_2x_2$ and $x_2v_1$ by Proposition~\ref{prop:1-triangle-neighbor}.
Note that it also contributes at most $1/6$ units of charge through $w_1p$,
since the recipient of such a charge must be a $1$-triangle (recall $w_1A_2$ is an edge of $M(G)$)
that has a neighbor which is incident to two original vertices ($A_2$ and $B_0$).
Since $w_1,p,x'_2,x_2 \notin \mathcal{P}'(f_1)$, if the size of $f_1$ is at least six,
then it contributes at least $\frac{|f_1|-4-3/6-(|f_1|-3)/3}{|f_1|-4} \geq 1/6$ units of charge to $f$ in Step~6.

If $|f_1|=5$, then note that $f_1$ contributes at most $1/6$ units of charge through $px'_2$, 
since such a contribution must be to the $1$-triangle $pB_0x'_2$ that has a neighbor which is incident to
two original vertices ($B_0$ and $A_3$).
Moreover, if $f_1$ contributes $1/6$ units of charge to this triangle,
then it does not contribute charge through $w_1p$,
for otherwise both neighbors of the $1$-triangle $pB_0x'_2$ would be incident to two original vertices (see Figure~\ref{fig:1-in-1-1-in-3_3k}).
Therefore, in this case as well $f$ receives at least $1/6$ units of charge from $f$ in Step~6 and thus $ch_6(f) \geq 0$.

\medskip

\noindent\underline{Subcase 1.2:} $f$ sends $1/3$ units of charge to $t_3$ in Step~3.
We first observe that $e_1$ must be an edge of $t_1$.
Indeed, suppose it does not and refer to Figure~\ref{fig:1-in-1-1-in-3_4}.
\begin{figure}[ht]
    \centering
    \subfigure[If $e_1$ is not an edge of $t_1$, then $f_3$ is a neighbor of $t_3$ and $|V(f_3)| \geq 2$.]{\label{fig:1-in-1-1-in-3_4}
    {\includegraphics[width=5cm]{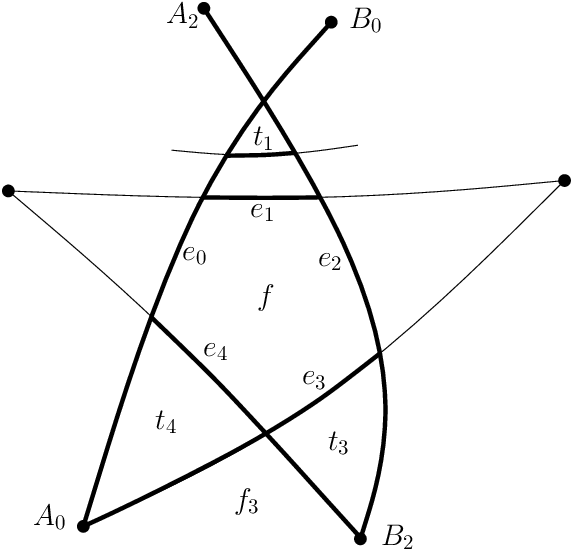}}}
    \hspace{10mm}
    \subfigure[$e_3$ is an edge of $t_3$. $f_1$ sends charge to $f$ in Step~6.]{\label{fig:1-in-1-1-in-3_5}
    {\includegraphics[width=5cm]{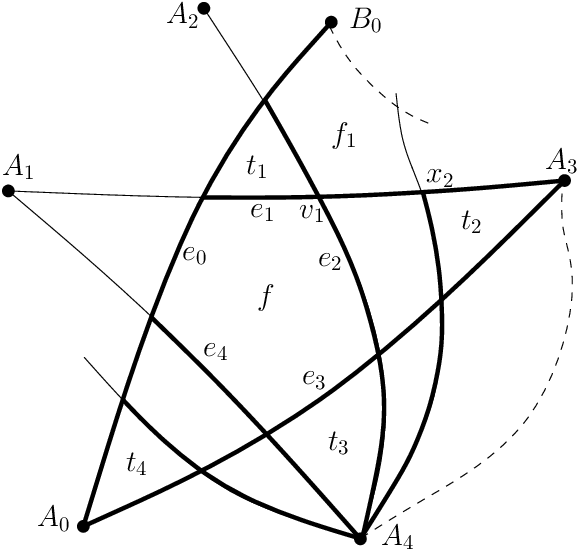}}}
    \hspace{10mm}
    \subfigure[$e_3$ is not an edge of $t_3$ and $e_4$ is an edge of $t_4$. $f_3$ sends charge to $f$ in Step~6.]{\label{fig:1-in-1-1-in-3_6}
    {\includegraphics[width=5cm]{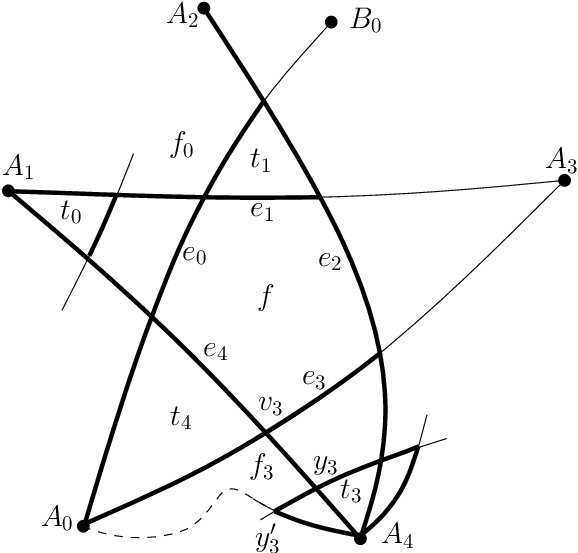}}}
    \hspace{10mm}
    \subfigure[$e_3$ is not an edge of $t_3$ and $e_4$ is not an edge of $t_4$. $f_1$ sends charge to $f$ in Step~6.]{\label{fig:1-in-1-1-in-3_7}
    {\includegraphics[width=5cm]{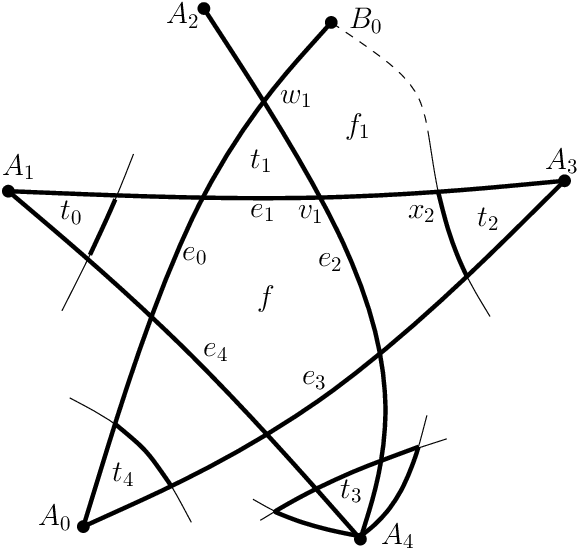}}}
	\caption{Subcase 1.2 in the proof of Lemma~\ref{lem:ch_6-0-pentagon-2/3}: $f$ sends $1/3$ units of charge to $t_1$ in Step~1
			and $1/3$ units of charge to $t_3$ in Step~3.}
	\label{fig:1-in-1-1-in-3-1.2}
\end{figure}
Since each of $(A_2,B_2)$ and $(A_0,B_0)$ contains four crossings, $e_3$ is an edge of $t_3$ and $e_4$ is an edge of $t_4$.
But then one neighbor of $t_3$ (the face $f_3$) is incident to two original vertices ($A_0$ and $B_2$) and therefore $f$ could not have
contributed charge to $t_3$ in Step~3.

Suppose that $e_3$ is an edge of $t_3$ and refer to Figure~\ref{fig:1-in-1-1-in-3_5}.
Since the neighbors of $t_3$ are $1$-triangles it follows that each of the wedges of $t_2$ and $t_4$ 
contains exactly one $1$-quadrilateral. 
Observe also that one neighbor of $t_2$ is incident to two original vertices ($A_3$ and $A_4$), 
which implies that its other neighbor is either a $1$-quadrilateral or a $1$-triangle.
This in turn implies that $f_1$ cannot be a quadrilateral (because then both neighbors of $t_2$ would be incident
to two original vertices). 
Thus, the size of $f_1$ is at least five and it contains one original vertex.
Since $f_1$ is not a vertex-neighbor of a $0$-pentagon at $B_0$ and $x_2$,
and it contributes at most $1/6$ units of charge through $x_2v_1$ and each of its edges that are incident to $B_0$,
it follows that $f_1$ contributes at least $\frac{|f_1|+1-4-1/3-3/6-(|f_0|-3)/3}{|f_0|-2} \geq 1/6$ units of charge
to every face in $\mathcal{P}(f_1)$ (including $f$) in Step~6.

Therefore, assume that $e_3$ is not an edge of $t_3$.
This implies that $f_0$ is incident to $A_2$.
If $e_0$ is an edge of $t_0$, then $|V(f_0)| \geq 2$ and $|f_0|\geq 4$ and by Proposition~\ref{prop:step6}
$f_0$ contributes at least $1/3$ units of charge to $f$ in Step~6.
We may assume therefore that the wedge of $t_0$ contains exactly one $1$-quadrilateral
(more than one would imply five crossings on $(A_4,B_4)$).

If $e_4$ is an edge of $t_4$ (see Figure~\ref{fig:1-in-1-1-in-3_6}), then it follows 
from Proposition~\ref{prop:1/6} that $f_3$ compensates for the missing charge of $f$.
Thus, we may assume that $e_4$ is not an edge of $t_4$, and, for similar reasons, $e_2$ is not an edge of $t_2$ (refer to Figure~\ref{fig:1-in-1-1-in-3_7}).
Recall also that $e_0$ is not an edge of $t_0$.
Note that $x_2,v_1,w_1,B_0 \in V(f_1)$.
Observe that $f_1$ contributes $1/3$ units of charge to $B_0$ and to $t_1$ 
and at most $1/6$ units of charge through $v_1x_2$ (since that recipient of such a charge must be a neighbor of $t_3$).
Furthermore, $f_1$ does not contribute any charge through $B_0w_1$,
since the other face that is incident to this edge is incident to two original vertices.
If $f_1$ is a $1$-quadrilateral, then its immediate neighbor at $B_0x_2$ is incident to two original vertices, and therefore it also does not contribute any charge through this edge,
and thus $ch_5(f_1) \geq 1/6$.
We also have $\mathcal{P}(f_1)=\{f\}$ in this case, and so $f_1$ sends at least $1/6$ units of charge to $f$ in Step~6.
If $|f_1| \geq 5$, then $f_1$ contributes at least $\frac{|f_1|+1-4-1/3-3/6-(|f_1|-3)/3}{|f_1|-3} \geq 1/6$ units of charge to $f$ in Step~6.
%
% consider the clockwise chain from $B_0$ to $x_2$, 
% and observe that it contains $|f_1|-3$ edges
% and at most $|f_1|-4$ vertices through which $f_1$ sends charge in Step~6.
% Therefore, every face in $\mathcal{P}(f_1)$ (including $f$) receives from $f_1$ in Step~6 at least
% $\frac{|f_1|+1-4-2/3-1/6-(|f_1|-3)/3}{|f_1|-3} \geq 1/6$ units of charge.
%It is not hard to see that this is true also in the case that $|f_1| \geq 5$.

\bigskip
\noindent\underline{Case 2:} $ch_1(f) = 2/3$, $ch_3(f)=0$ and $ch_5(f) < 0$.
That is, $f$ contributes $1/3$ units of charge to exactly two $1$-triangles in Step~3.
Recall that we assume without loss of generality that $f$ sends $1/3$ units of charge to $t_1$ in Step~1.
By symmetry, it is enough to consider the cases that the edges through which
$f$ contributes charge in Step~3 are $e_2$ and $e_3$,
$e_2$ and $e_4$, $e_2$ and $e_0$, and $e_3$ and $e_4$.

\medskip
\noindent\underline{Subcase 2.1:} $f$ contributes charge through $e_2$ and $e_3$ in Step~3.
It follows from Proposition~\ref{prop:e_1-e_2} and the maximum number
of crossings per edge that the wedge of $t_3$ contains exactly one $0$-quadrilateral,
the wedge of $t_1$ contains no $0$-quadrilaterals
and the wedge of $t_2$ contains one or two $0$-quadrilaterals.
If the wedge of $t_2$ contains two $0$-quadrilaterals,
then it follows that the size of $f_0$ is at least four and this face is incident to $A_1$ and $A_2$
(see Figure~\ref{fig:1-in-1-3-in-3c}).
% \begin{figure}[ht]
%     \centering
%     \includegraphics[width=5cm]{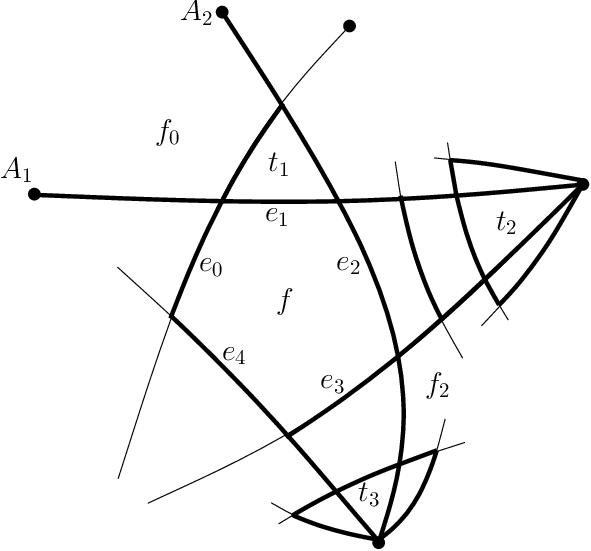}
% 	\caption{An illustration for Subcase 2.1 in the proof of Lemma~\ref{lem:ch_6-0-pentagon-2/3}: If $f$ contributes through $e_2$ and $e_3$ in Step~3,
% 				such that the wedge of $t_2$ contains two $0$-quadrilaterals,
% 				then $f$ receives at least $1/3$ units of charge from $f_0$ in Step~6.}
% 	\label{fig:1-in-1-3-in-3c}
% \end{figure}
\begin{figure}[ht]
    \centering
    \subfigure[If the wedge of $t_2$ contains two $0$-quadrilaterals,
				then $f$ receives at least $1/3$ units of charge from $f_0$ in Step~6.]{\label{fig:1-in-1-3-in-3c}
    {\includegraphics[width=4.5cm]{1-in-1-3-in-3c}}}
    \hspace{10mm}
    \subfigure[$f_2$ is a $0$-quadrilateral. If $f$ contributes charge through $e_0$, 
				then $f_0$ contributes at least $1/6$ units of charge to $f$ in Step~6.]{\label{fig:1-in-1-2-in-3_100}
    {\includegraphics[width=5cm]{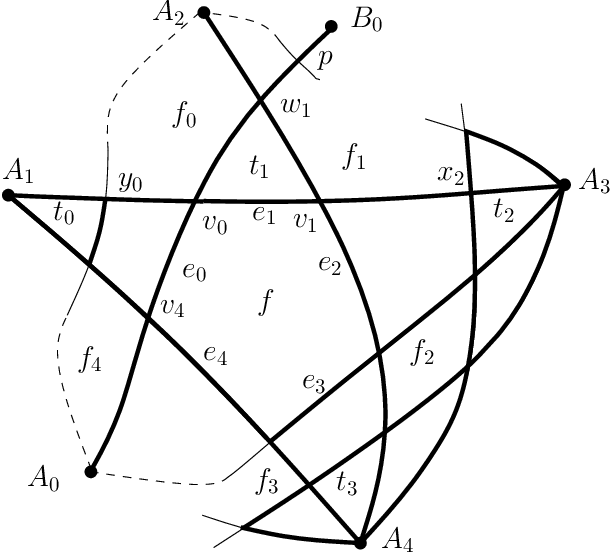}}}
	\caption{Subcase 2.1 in the proof of Lemma~\ref{lem:ch_6-0-pentagon-2/3}: $f$ sends $1/3$ units of charge to $t_1$ in Step~1,
			$1/3$ units of charge to $t_2$ and $t_3$ in Step~3.}
	%\label{fig:1-in-1-2-in-3-2.1}
\end{figure}
By Proposition~\ref{prop:step6} $f_0$ contributes at least $1/3$ units of charge
to $f$ in Step~6, and thus $ch_6(f) \geq 0$.

Suppose that the wedge of $t_2$ contains exactly one $0$-quadrilateral.
If $B_0 \in V(f_1)$, then it follows from Proposition~\ref{prop:2/3} that $f_1$ sends at least $2/3$ units of charge to $f$ in Step~6
and so $ch_6(f) \geq 0$.
Assume therefore that $B_0 \notin V(f_1)$ and let $p$ be the (only) crossing point between $B_0$ and $w_1$ on $(A_0,B_0)$.

Note that $|f_2| \geq 4$.
If $|f_2| \geq 5$, then by Proposition~\ref{prop:e_1-e_2-step3} $f_2$ contributes % *** vvv
at least $1/3$ units of charge to $f$ and thus $ch_6(f) \geq 0$.
Assume therefore that $|f_2|=4$, and refer to Figure~\ref{fig:1-in-1-2-in-3_100}.

If $f$ contributes charge to $t_4$ in Step~5, then $t_4$ must share $e_4$ with $f$,
since $(A_0,B_0)$ has already four crossings points ($v_4$, $v_0$, $w_1$ and $p$).
Therefore, it follows from Proposition~\ref{prop:1/6} that $f_3$ contributes at least $1/6$ units of charge to $f$ in such a case.
Thus, if $ch_6(f)<0$, then $f$ must contribute charge to $t_0$ in Step~5.

Assume that it does, consider the face $f_0$ and observe that it is incident to $A_2$ and that its size is at least four.
Therefore, if the wedge of $t_0$ contains no $0$-quadrilaterals, then $A_1 \in V(f_0)$ and $f_0$ contributes at
least $1/3$ units of charge to $f$ in Step~6 by Proposition~\ref{prop:step6}.
Assume therefore the wedge of $t_0$ contains exactly one $0$-quadrilateral and $v_0y_0$ is an edge of $f_0$ (see Figure~\ref{fig:1-in-1-2-in-3_100}).
Note that $f_0$ contributes at most $1/6$ units of charge through this edge, since the recipient of such a charge must be the $1$-triangle $f_4$ that has a neighbor that is incident to two original vertices.
Since $f_0$ contributes at most $1/6$ units of charge through its edges that are incident to $A_2$ and $A_2,y_0 \notin \mathcal{P}'(f_0)$
it follows that if $|f_0| \geq 5$ then $f_0$ contributes at least $\frac{|f_0|+1-4-1/3-3/6-(|f_0|-3)/3}{|f_0|-2} \geq 1/6$  units of charge to $f$ in Step~6.

If $f_0$ is a $1$-quadrilateral, then observe that it does not contribute charge through $y_0A_2$ since its immediate neighbor at this edge
is incident to $A_1$ and $A_2$.
$f_0$ does not contribute charge through $A_2w_1$ as well, since the recipient of such a charge must be a $1$-triangle whose
vertices are $A_2$, $w_1$ and $p$. 
However, since $ch_3(f_0)=1/3$ and the other neighbor of this triangle is incident to both $A_2$ and $B_0$,
it follows from Step~4 that $f_0$ does not contribute charge in this step.
Furthermore, $f_0$ does not contribute charge through $v_0y_0$ (to the $1$-triangle $f_4$), for otherwise
both neighbors of $t_0$ would be incident to two original vertices and hence $t_0$ would not receive charge from $f$ in Step~5 (see Figure~\ref{fig:1-in-1-2-in-3_100}).
Thus, $ch_5(f_0) \geq 1/3$ and $f_0$ contributes at least $1/6$ units of charge to $f$ also when $f_0$ is a $1$-quadrilateral.

\medskip
\noindent\underline{Subcase 2.2:} $f$ sends $1/3$ units of charge through $e_2$ and $e_4$ in Step~3.
Suppose that $e_1$ is not an edge of $t_1$ and refer to Figure~\ref{fig:1-in-1-2-in-3d}.
Since $(A_0,B_0)$ has four crossings, it follows that $e_4$ is an edge of $t_4$.
The edge $(A_2,B_2)$ also has four crossings, which implies that $v_2B_2$ is an edge in $M(G)$.
Therefore, the face that shares $e_3$ with $f$ is of size at least four and is incident to $B_2$.
Thus, $f$ does not contribute charge through $e_3$, and hence must contribute
$1/6$ units of charge through $e_0$.
This implies that $A_1$ and $B_4$ coincide, which in turn implies that $e_0$
is not an edge of $t_0$ and therefore the wedge of $t_2$ contains exactly one $0$-quadrilateral
(it cannot contain no $0$-quadrilaterals by Proposition~\ref{prop:e_1-e_2}).
Consider the face $f_2$ and observe that it does not contribute charge through $B_2v_2$ and $v_2y_2$, and that it contributes at most $1/6$ units of charge
through its other edge that is incident to $B_2$ and through $y_2y'_2$. % and through the edge that it shares with the $1$-triangle that is a neighbor of $t_2$.
Note also that $f_2$ is not a vertex-neighbor of a $0$-pentagon at $B_2$, $y_2$, and $y'_2$.
It follows that in Step~6 it contributes at least $\frac{|f_2|+1-4-1/3-2/6-(|f_2|-4)/3}{|f_2|-3} \geq 1/6$ units of charge to $f$ and thus $ch_6(f) \geq 0$.
% Considering the face $f_2$, it is not hard to see that it contributes 
% at least $1/6$ units of charge to $f$ and thus $ch_6(f) \geq 0$.

Assume therefore that $e_1$ is an edge of $t_1$.
We may also assume that the wedge of $t_2$ contains exactly one $0$-quadrilateral.
Indeed, by Proposition~\ref{prop:e_1-e_2} it must contain at least one $0$-quadrilateral.
Suppose that the wedge of $t_2$ contains two $0$-quadrilaterals and refer to Figure~\ref{fig:1-in-1-2-in-3_3}.
\begin{figure}[ht]
    \centering
    \subfigure[If $e_1$ is not an edge of $t_1$, then $f_2$ sends at least $1/6$ units of charge to $f$ in Step~6.]{\label{fig:1-in-1-2-in-3d}
    {\includegraphics[width=4.5cm]{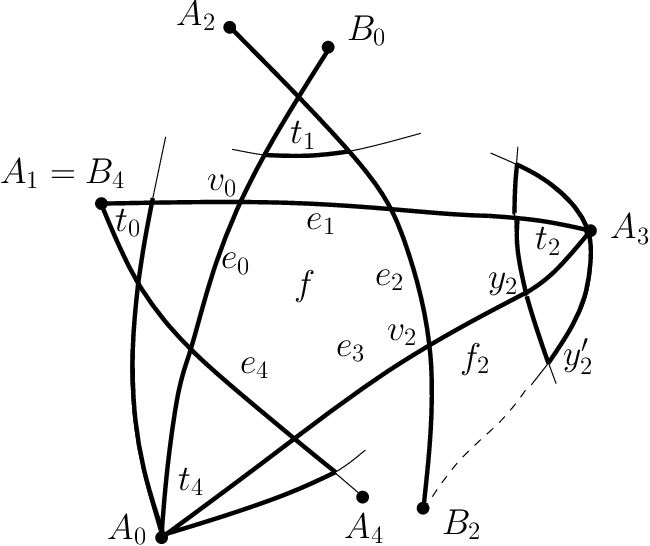}}}
    \hspace{2mm}
    \subfigure[If the wedge of $t_2$ contains two $0$-quadrilaterals,
				then $f$ does not contribute charge through $e_3$ and $e_0$.]{\label{fig:1-in-1-2-in-3_3}
    {\includegraphics[width=4.5cm]{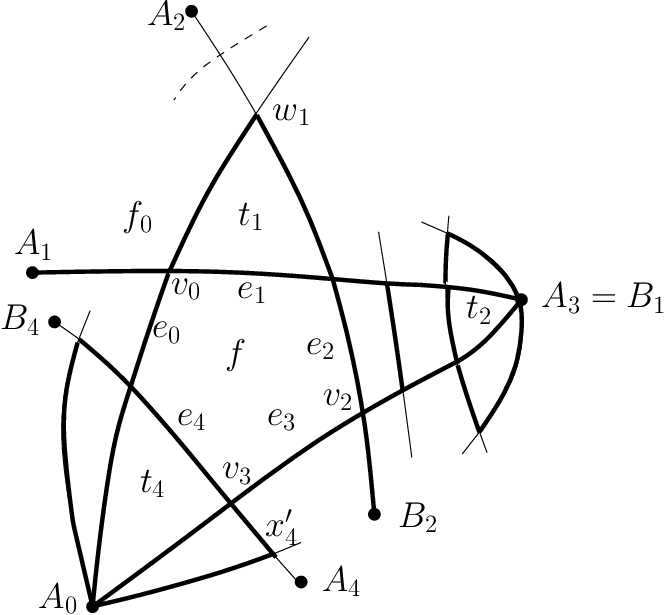}}}
    \hspace{2mm}
    \subfigure[If $e_4$ is not an edge of $t_4$, then $f_1$ sends charge to $f$ in Step~6.]{\label{fig:1-in-1-2-in-3_4}
    {\includegraphics[width=4.5cm]{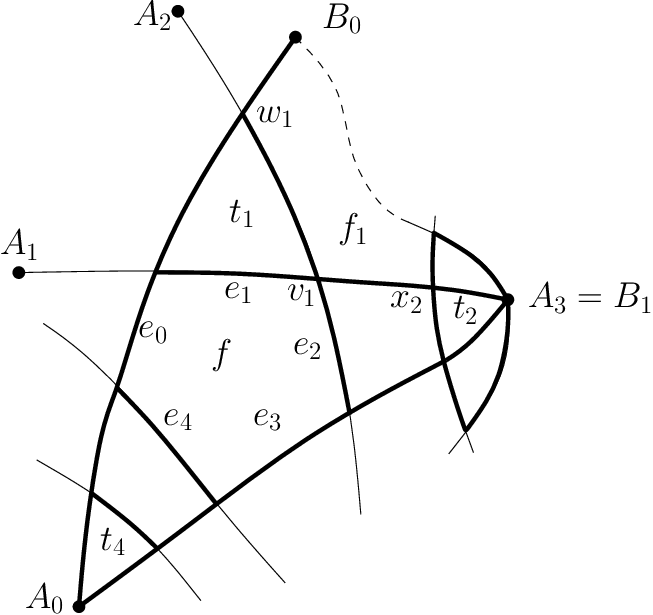}}}
	\caption{Subcase 2.2 in the proof of Lemma~\ref{lem:ch_6-0-pentagon-2/3}: $f$ contributes charge through $e_1$ in Step~1 and through $e_2$ and $e_4$ in Step~3.}
	\label{fig:1-in-1-2-in-3-2.1}
\end{figure}
Then $e_4$ must be an edge of $t_4$ and $v_0A_1$ must be an edge of $f_0$.
If $A_2w_1$ is also an edge of $f_0$, then $|f_0| \geq 4$ and $|V(f_0)| \geq 2$,
and by Proposition~\ref{prop:step6} $f_0$ contributes at least $1/3$ units of charge to $f$ in Step~6.
Therefore, we may assume that the (open) segment $A_2w_1$ contains a crossing point.
It follows that $v_2B_2$ is an edge in $M(G)$
and that $f$ does not contribute charge through $e_3$,
since the face that shares this edge with $f$ is incident to $B_2$ and its size is at least four (it is also incident to $v_2$, $v_3$ and $x'_4$).
Thus, $f$ must contribute charge through $e_0$.
However, the face that shares $e_0$ with $f$ is also of size at least four
and is incident to an original vertex ($A_1$), and therefore $f$ does not contribute
charge through $e_0$ either which implies that $ch_5(f) \geq 0$.

We may assume therefore that the wedge of $t_2$ contains exactly one $0$-quadrilateral.
Consider now the case that $e_4$ is not an edge of $t_4$, and refer to Figure~\ref{fig:1-in-1-2-in-3_4}.
It follows that $B_0 \in V(f_1)$ and therefore by Proposition~\ref{prop:2/3} $f_1$ sends at least $2/3$ units of charge
to $f$ in Step~6.
% Notice that $|f_1| \geq 5$ and $|V(f_1)| \geq 1$.
% $f_1$ contributes at most $1/6$ units of charge through each of its two edges that are incident to $x_2$
% and through each of its two edges that are incident to $B_0$.
% Note also that $f_1$ is not a vertex-neighbor of a $0$-pentagon at $B_0$, $x_2$, 
% and at the vertex that precedes $x_2$ in a clockwise order of the vertices of $f_1$.
% Therefore, $f_1$ contributes in Step~6 at least $\frac{|f_1|+1-4-1/3-1/3-4/6-(|f_1|-5)/3}{|f_1|-3} \geq 1/3$
% units of charge to every face in $\mathcal{P}(f_1)$, including $f$.

It remains to consider the case that $e_4$ is an edge of $t_4$.
If $ch_5(f)<0$ then $f$ must have contributed charge through $e_3$ or $e_0$ in Step~5.
Suppose that $f$ sends $1/6$ units of charge through $e_3$ in Step~5,
and refer to Figure~\ref{fig:1-in-1-2-in-3_5}.
\begin{figure}[ht]
    \centering
    \subfigure[If $f$ contributes charge through $e_3$, then $f_2$ sends charge to $f$ in Step~6.]{\label{fig:1-in-1-2-in-3_5}
    {\includegraphics[width=4.5cm]{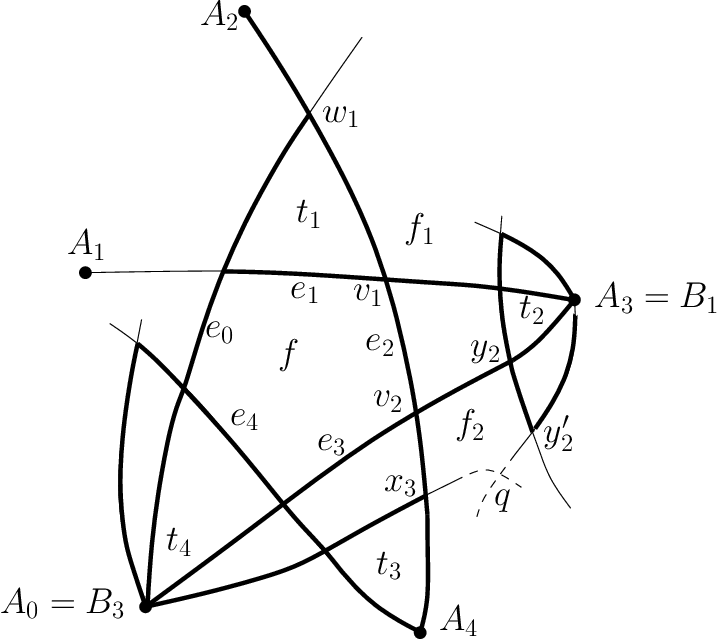}}}
    \hspace{2mm}
    \subfigure[$f$ sends charge through $e_0$ and there is a crossing points between $A_2$ and $w_1$.
				Then $f_2$ sends at least $1/3$ units of charge to $f$ in Step~6.]{\label{fig:1-in-1-2-in-3_6a}
    {\includegraphics[width=4.5cm]{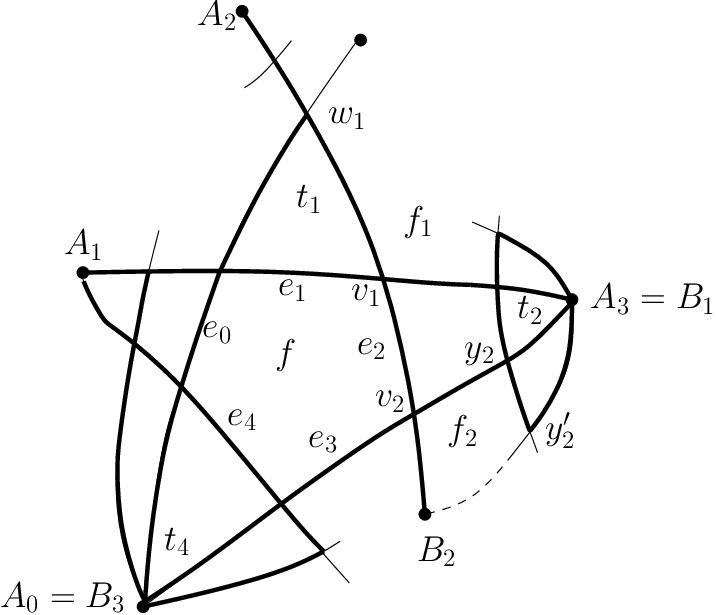}}}
    \hspace{2mm}
    \subfigure[$f$ sends charge through $e_0$ and $w_1A_2$ is an edge in $M(G)$.
				Then $f_1$ sends charge to $f$ in Step~6.]{\label{fig:1-in-1-2-in-3_6}
    {\includegraphics[width=4.5cm]{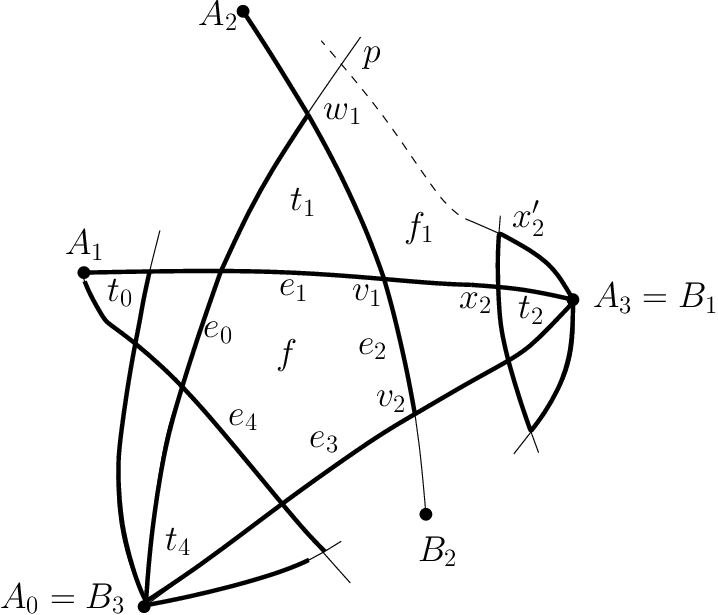}}}
	\caption{Subcase 2.2 in the proof of Lemma~\ref{lem:ch_6-0-pentagon-2/3}. $e_4$ is an edge of $t_4$.}
	\label{fig:1-in-1-2-in-3-2.2b}
\end{figure}
Note that the size of $f_2$ is at least five (if $|f_2|=4$, then there are two parallel edges between $A_3$ and $B_3$) 
and let $q$ be its vertex that follows $y'_2$.
Observe that $f_2$ contributes no charge through $v_2y_2$ (since $|f_1| \geq 5$) 
and at most $1/6$ units of charge through $x_3v_2$ and $y_2y'_2$ by Proposition~\ref{prop:1-triangle-neighbor}.
Note that $f_2$ is not a vertex-neighbor of a $0$-pentagon at $x_3$, $y_2$ and $y'_2$.
If $|f_2|=5$, then if $f_2$ contributes charge through $qx_3$ or $y'_2q$,
then the recipients of this charge must be $1$-triangles.
It follows that in such a case $q \notin \mathcal{P}'(f_2)$.
Moreover, if $f_2$ contributes $1/3$ units of charge through one of these two edges, then it does not contribute charge through the other edge.
%However, if $f_2$ contributes $1/3$ units of charge through $qx_3$, then it does not contribute charge through $pq$.
% However, if $f_2$ contributes $1/3$ charge through one of these edges, then it does not contribute charge through the other edge
% and moreover through $q$ in Step~6.
It follows that $f_2$ sends at least $1/6$ units of charge to $f$ in Step~6 and $ch_6(f) \geq 0$.
% Then $|f_2| \geq 5$  and it is not hard to see that $f_2$ sends at least $1/3$ units of
% charge to $f$ in Step~6.
If $|f_2| \geq 6$, then again $f_2$ sends at least 
$\frac{|f_2|-4-2/6-(|f_2|-3)/3}{|f_2|-3} \geq 1/6$ units of charge to $f$ in Step~6. 
% If $|f_2| \geq 6$ then consider the clockwise chain from $p$ to $x_3$, 
% and observe that it contains $|f_2|-3$ edges
% and at most $|f_2|-4$ vertices through which $f_2$ sends charge in Step~6.
% However, if $f_2$ contributes ($1/6$ units of) charge through $pq$ then it does not contribute charge through $q$ in Step~6.
% Therefore, every face in $\mathcal{P}(f_2)$ (including $f$) receives from $f_2$ in Step~6 at least
% $\min\left\{\frac{|f_2|-4-2/6-(|f_2|-3)/3}{|f_2|-4},\frac{|f_2|-4-3/6-(|f_2|-4)/3}{|f_2|-3}\right\} \geq 1/3$ units of charge.

Finally, suppose that $f$ %does not contribute (at least $1/6$ units of) charge through $e_3$ and 
sends $1/6$ units of charge through $e_0$ in Step~5.
If there is a crossing point between $A_2$ and $w_1$ on $(A_2,B_2)$,
then $B_2 \in V(f_2)$ and $|f_2| \geq 4$ (see Figure~\ref{fig:1-in-1-2-in-3_6a}).
%Let $z$ be the vertex that follows $y_2$ in $f_2$.
Observe that $f_2$ does not contribute charge through $B_2v_2$ (since its immediate neighbor at this edge is not a $1$-triangle)
and $v_2y_2$ (since $|f_1| \geq 5$), and at most $1/6$ units of charge through $y_2y_2'$ (by Proposition~\ref{prop:1-triangle-neighbor}) and through 
its other edge that is incident to $B_2$.
Since $B_2,y_2,y'_2 \notin \mathcal{P}'(f_2)$ it follows that $f_2$ contributes at least
$\frac{|f_2|+1-4-1/3-2/6-(|f_2|-4)/3}{|f_2|-3} \geq 1/3$ units of charge to $f$ in Step~6 and so $ch_6(f)\geq 0$.

If $w_1A_2$ is an edge in $M(G)$, then
consider the face $f_1$ and note that its size is at least five.
Let $p$ be the other vertex of $f_1$ that is adjacent to $w_1$ but $v_1$.
If $p=B_0$, then it follows from Proposition~\ref{prop:2/3} that $f_1$ sends at least $2/3$ units of charge to $f$ in Step~6.
Assume therefore that $p$ is a crossing point and refer to Figure~\ref{fig:1-in-1-2-in-3_6}. 
Observe that $f_1$ sends at most $1/6$ units
of charge through $x'_2x_2$ and $x_2v_1$ by Proposition~\ref{prop:1-triangle-neighbor}.
Since $w_1A_2$ is an edge in $M(G)$ and $(A_0,B_0)$ already has four crossings it follows
that $f$ contributes at most $1/6$ units of charge through $w_1p$ as well.
Note also that $x_2,x'_2,w_1 \notin \mathcal{P}'(f_1)$.
If $|f_1|=5$, then by Proposition~\ref{prop:1-triangle-neighbor} $f$ sends at most $1/6$ units of charge through $px'_2$.
Moreover, $f_1$ cannot send charge through both $px'_2$ and $w_1p$,
because then the $1$-triangle that gets the charge through $px'_2$ would have two neighbors 
such that each of them is incident to two original vertices.
Furthermore, if $f_1$ sends charge through one of these edges, then $p \notin \mathcal{P}'(f_1)$.
% Therefore $ch_5(f_1) \geq 1/6$.
% Note that $f_1$ is not a vertex-neighbor of a $0$-pentagon at $x_2$, $q$, $p$ and $w_1$.
% If $ch_5(f_1)=1/6$ then $\mathcal{P}(f_1)=\{f\}$.
% If $\mathcal{P}(f_1)$ contains another face then this face must intersect $f_1$ exactly at $p$,
% but in this case $f_1$ does not contribute charge through $pq$ and $w_1p$ and so $ch_5(f_1) \geq 1/3$.
It follows that if $|f_1|=5$, then $f_1$ sends at least $1/6$ units of charge to $f$ in Step~6.
If $|f_1| \geq 6$, then $f$ sends at least 
$\frac{|f_1|-4-1/3-3/6-(|f_1|-4)/3}{|f_1|-4} \geq 1/6$ units of charge to $f$ in Step~6.
Recall that if $f$ sends $1/6$ units of charge through $e_3$, then it gets at least $1/6$ units of charge from $f_2$ in Step~2.
Therefore, $ch_6(f) \geq 0$.
%
% If $|f_1| \geq 5$, then consider the clockwise chain from $w_1$ to $q$, 
% and observe that it contains $|f_1|-3$ edges
% and at most $|f_1|-4$ vertices through which $f_1$ sends charge in Step~6.
% Recall that $f_1$ contributes at most $1/6$ units of charge through $w_1p$.
% Therefore, every face in $\mathcal{P}(f_1)$ (including $f$) receives from $f_1$ in Step~6 at least
% $\frac{|f_1|-4-3/6-1/3-(|f_1|-4)/3}{|f_1|-3} \geq 1/6$ units of charge.
% If $|f_1| > 5$, then every `extra' edge might add a $0$-pentagon to $\mathcal{P}(f_1)$ and $f_1$ might contribute at most $1/3$ through it,
% but it also increases the initial charge of $f_1$ by one, and therefore, still every $0$-pentagon in $\mathcal{P}(f_1)$
% receives at least $1/6$ units of charge from $f_1$.
%Note that one neighbor of $t_0$ is a $2$-triangle, and therefore $ch_5(f) \geq -1/6$.
%Thus, $f$ ends up with a non-negative charge.

\medskip
\noindent\underline{Subcase 2.3:} $f$ sends $1/3$ units of charge through $e_2$ and $e_0$ in Step~3.
It follows from Proposition~\ref{prop:e_1-e_2} and the maximum number of crossings per edge
that each of the wedges of $t_2$ and $t_0$ contains exactly one $0$-quadrilateral.

Suppose first that $e_1$ is not an edge of $t_1$ and refer to Figure~\ref{fig:1-in-1-2-in-3_7}.
Consider the face $f_2$ and observe that its size is at least four and it is incident to $B_2$.
% Let $p$ be the vertex that follows $y_2$ in a clockwise order of the vertices of $f_2$.
Note that $f_2$ does not contribute charge through $v_2y_2$, for otherwise $f_1$ and its immediate neighbor at $y_1x'_2$ must be $0$-quadrilaterals, which would imply that the edge of $G$ that contains $x_2y_2$ would have more than four crossings. 
$f_2$ contributes at most $1/6$ units of charge through $y_2y'_2$ (by Proposition~\ref{prop:1-triangle-neighbor}) and each of its edges that are incident to $B_2$.
Observe also that $f_2$ is not a vertex-neighbor of a $0$-pentagon at $B_2$, $y_2$ and $y'_2$.
Therefore $f_2$ contributes at least $\frac{|f_2|+1-4-1/3-3/6-(|f_2|-4)/3}{|f_2|-3} \geq 1/6$ units of charge to $f$ in Step~6. 
By symmetry, so does $f_4$ and therefore $ch_6(f) \geq 0$.
\begin{figure}[ht]
    \centering
    \subfigure[If $e_1$ is not an edge of $t_1$, then $f_2$ sends charge to $f$ in Step~6.]{\label{fig:1-in-1-2-in-3_7}
    {\includegraphics[width=5cm]{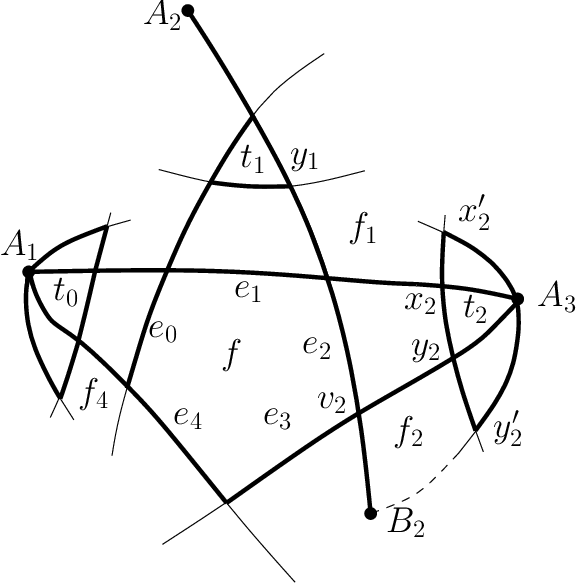}}}
    \hspace{10mm}
    \subfigure[If $e_1$ is an edge of $t_1$ then $f_0$ or $f_2$ send charge to $f$ in Step~6. ]{\label{fig:1-in-1-2-in-3_8}
    {\includegraphics[width=5cm]{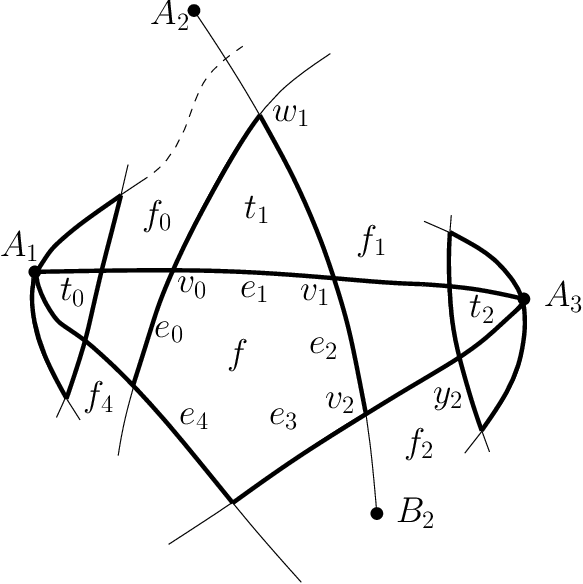}}}
	\caption{Subcase 2.3 in the proof of Lemma~\ref{lem:ch_6-0-pentagon-2/3}: $f$ contributes $1/3$ units of charge to $t_1$ in Step~1,
			and $1/3$ units of charge to each of $t_0$ and $t_2$ in Step~3.}
	\label{fig:1-in-1-2-in-3-2.2}
\end{figure}

Suppose now that $e_1$ is an edge of $t_1$, and refer to Figure~\ref{fig:1-in-1-2-in-3_8}.
Consider the face $f_0$ and observe that its size is at least five.
If $A_2$ is a vertex of $f_0$, then by Proposition~\ref{prop:2/3} $f_0$ sends at least $2/3$ units of charge to $f$ in Step~6.
Assume therefore that there is a crossing point between $A_2$ and $w_1$ on $(A_2,B_2)$
and thus $B_2v_2$ is an edge of $f_2$ (see Figure~\ref{fig:1-in-1-2-in-3_8}).
In this case, as in the case that $e_1$ is not an edge of $t_1$, 
it follows, that $f_2$ contributes at least $1/6$ units of charge to $f$ in Step~6 (note that $|f_1|\geq 5$ and so as before $f_2$ does not contribute charge through $v_2y_2$).
By symmetry, $f$ also receives at least $1/6$ units of charge from $f_1$ or $f_4$
and therefore ends up with a non-negative charge.
%
%
% \noindent\underline{Subcase 2.4:} $f$ sends $1/3$ units of charge to $t_3$ and $t_4$ in Step~3.
% We may assume that $ch_5(g)=-1/6$, for otherwise one of the previous cases applies.
% It follows from Proposition~\ref{prop:e_1-e_2} that $e_3$ (resp., $e_4$) is not an edge of $t_3$ (resp., $t_4$).
% Since there are at most four crossings per edge, the wedge of $t_3$ (resp., $t_4$) contains exactly one $1$-quadrilateral,
% and $e_1$ is an edge of $t_1$ (see Figure~\ref{fig:1-in-1-2-in-3-2.4}).
% \begin{figure}[ht]
%     \centering
%     \includegraphics[width=4cm]{1-in-1-2-in-3_9}
% 	\caption{Subcase 2.4 in the proof of Lemma~\ref{lem:ch_6-0-pentagon-2/3}.}
% 	\label{fig:1-in-1-2-in-3-2.4}
% \end{figure}
% Denote by $f'$ the face whose intersection with $f$ is exactly $x$.
% Observe that $f'$ may contribute at most $1/6$ units of charge through each of its depicted (bold) edges.
% It follows that $f'$ sends at least $1/6$ units of charge to $f'$ in Step~6 and $f$ ends up with a non-negative charge.

\medskip
\noindent\underline{Subcase 2.4:} $f$ sends $1/3$ units of charge to $t_3$ and $t_4$ in Step~3.
By Proposition~\ref{prop:e_1-e_2} and the maximum number of crossings per edge,
each of the wedges of $t_3$ and $t_4$ contains exactly one $0$-quadrilateral.
It follows that $|f_3| \geq 4$.
If $|f_3| \geq 5$, then by Proposition~\ref{prop:e_1-e_2-step3} $f_3$ sends at least $1/3$ units of charge % *** vvv
to $f$ in Step~6 and thus $ch_6(f) \geq 0$.

Suppose therefore that $|f_3|=4$, that is, $y'_3$ and $x'_4$ coincide (see Figure~\ref{fig:1-in-1-2-in-3-2.4}).
\begin{figure}[ht]
    \centering
    \includegraphics[width=5cm]{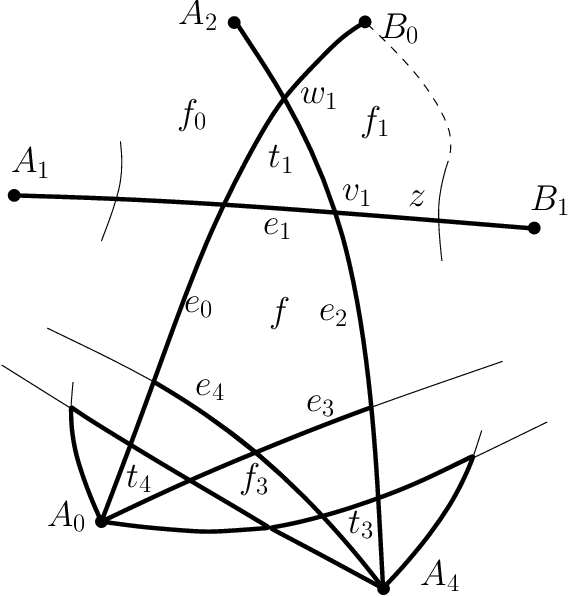}
	\caption{An illustration for Subcase 2.4 in the proof of Lemma~\ref{lem:ch_6-0-pentagon-2/3}: $f$ contributes charge through $e_3$ and $e_4$ in Step~3 and $|f_3|=4$.}
	\label{fig:1-in-1-2-in-3-2.4}
\end{figure}
Consider the face $f_1$ and observe that
its size is at least four and it is incident to $B_0$.
If it is also incident to $B_1$, then by Proposition~\ref{prop:step6} it sends at least $1/3$ units of charge to $f$ in Step~6 and thus $ch_6(f) \geq 0$.
Assume therefore that $f_1$ is not incident to $B_1$ and let $z$ be its other vertex but $v_1$ that lies on $(A_1,B_1)$.
By symmetry, we may also assume that $A_1$ is not a vertex of $f_0$, and therefore $z$ is the only crossing point between $v_1$ and $B_1$ on $(A_1,B_1)$.
Note that $w_1,B_0,z \notin \mathcal{P}'(f_1)$.
Therefore, if $|f_1| \geq 5$, then it follows from Proposition~\ref{prop:step6} that $f_1$ contributes at least $1/6$ units of charge to $f$ in Step~6.

Suppose that $f_1$ is a $1$-quadrilateral. 
Note that $f_1$ contributes $1/3$ units of charge through $v_1w_1$ in Step~1 and 
does not contribute charge through $w_1B_0$ and $B_0z$ since each of the faces that share these edges with $f_1$ is incident to two original vertices.
Considering the edge $zv_1$, observe that $f_1$ does not contribute charge through it in Step~1 as this would imply two parallel edges between $A_0$ and $B_0$.
Therefore, if $f_1$ contributes charge through $zv_1$, then the $1$-triangle that receives this charge is a neighbor of $t_3$
and hence $f_1$ contributes at most $1/6$ units of charge through $zv_1$ (since this neighbor is incident to a face with two original vertices).
Thus, $ch_5(f_1) \geq 1/6$ and $f$ gets all of this extra charge in Step~6.
By symmetry, $f$ also gets at least $1/6$ units of charge from $f_0$ and ends up with a non-negative charge.

\bigskip
\noindent\underline{Case 3:} $f$ contributes charge through at least three edges in Step~3.
Suppose first that $f$ contributes charge through three consecutive edges on the boundary of $f$, say $e_2$, $e_3$, and $e_4$.
Then it follows from Proposition~\ref{prop:e_1-e_2} and the maximum number
of crossings per edge that the wedge of each of the corresponding $1$-triangles contains exactly one $0$-quadrilateral
and that the wedge of $t_1$ contains no $0$-quadrilaterals (see Figure~\ref{fig:1-in-1-3-in-3z}).
\begin{figure}[ht]
    \centering
    \subfigure[If $f$ contributes through $e_2$, $e_3$ and $e_4$ in Step~3,
				then it receives at least $2/3$ units of charge from $f_1$ in Step~6.]{\label{fig:1-in-1-3-in-3z}
    {\includegraphics[width=5cm]{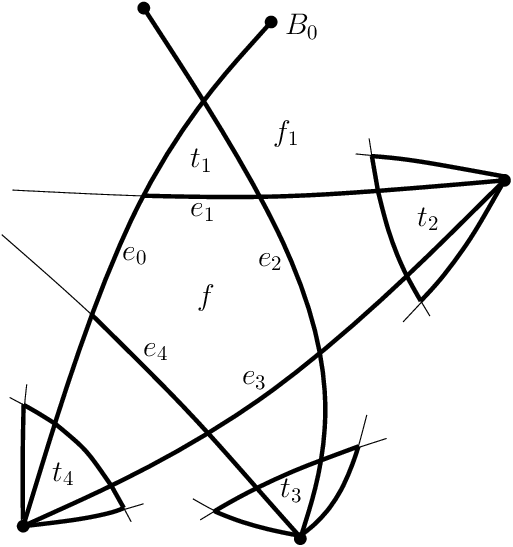}}}
    \hspace{10mm}
    \subfigure[If $f$ contributes through $e_2$, $e_3$ and $e_0$ in Step~3, then it receives at least $2/3$ units of charge from $f_0$ in Step~6.]{\label{fig:1-in-1-3-in-3y}
    {\includegraphics[width=5cm]{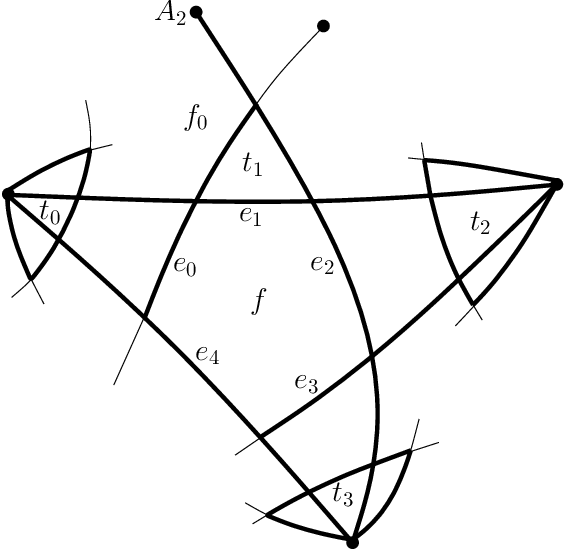}}}
	\caption{Case~3 in the proof of Lemma~\ref{lem:ch_6-0-pentagon-2/3}: $f$ sends $1/3$ units of charge to $t_1$ in Step~1
			and $1/3$ units of charge to three $1$-triangles in Step~3.}
	\label{fig:1-in-1-3-in-3yz}
\end{figure}
Moreover, $B_0 \in V(f_1)$ and therefore, by Proposition~\ref{prop:2/3} $f_1$ sends at least $2/3$ units of charge to $f$
and $ch_6(f) \geq 0$.

Suppose now that $f$ contributes charge in Step~3 through three edges that are not consecutive on its boundary.
By symmetry, we may assume that these edges are $e_2$, $e_3$ and $e_0$.
Note that we may also assume that $f$ does not contribute charge through $e_4$ in Step~3,
for the case of $f$ contributing in Step~3 through three consecutive edges was already considered.
It follows from Proposition~\ref{prop:e_1-e_2} and the maximum number
of crossings per edge that the wedge of each of the $1$-triangles $t_2$, $t_3$ and $t_0$ contains exactly one $0$-quadrilateral
and that the wedge of $t_1$ contains no $0$-quadrilaterals (see Figure~\ref{fig:1-in-1-3-in-3y}).
Moreover, $A_2 \in V(f_0)$ and therefore, by Proposition~\ref{prop:2/3} $f_0$ sends at least $2/3$ units of charge to $f$
and $ch_6(f) \geq 0$.

This concludes Case~3 and the proof of Lemma~\ref{lem:ch_6-0-pentagon-2/3}.
\end{proof}

It remains to consider the final charge of a $0$-pentagon that does not contribute charge in Step~1.

\begin{lem}\label{lem:ch_6-0-pentagon-1}
Let $f$ be a $0$-pentagon such that $ch_1(f)=1$ and $ch_5(f)<0$. Then $ch_6(f) \geq 0$.
\end{lem}

\begin{proof}
Suppose that $ch_1(f)=1$ and $ch_5(f)<0$.
Then $f$ contributes charge to exactly two, three, four or five $1$-triangles in Step~3.
We consider each of these cases separately.

\smallskip
\noindent\underline{Case 1:} $ch_3(f)=1/3$ and $ch_5(f)=-1/6$.
That is, $f$ contributes $1/3$ units of charge to two $1$-triangles in Step~3 
and contributes $1/6$ units of charge to three $1$-triangles in Step~5.
We may assume without loss of generality that in Step~3 either $f$ contributes charge to $t_1$ and $t_2$,
or it contributes charge to $t_1$ and $t_3$.

\medskip
\noindent\underline{Subcase 1.1:} $f$ contributes charge to $t_1$ and $t_2$ in Step~3
and to $t_3,t_4,t_0$ in Step~5.
Recall that by Proposition~\ref{prop:e_1-e_2} $e_1$ cannot be an edge of $t_1$ and $e_2$ cannot be an edge of $t_2$.

We claim that neither of the wedges of $t_1$ and $t_2$ contains two $0$-quadrilaterals.
Suppose, for contradiction, that the wedge of $t_2$ contains two $0$-quadrilaterals and refer to Figure~\ref{fig:2-in-3_3-in-5_1.1}.
\begin{figure}[ht]
    \centering
    \includegraphics[width=6cm]{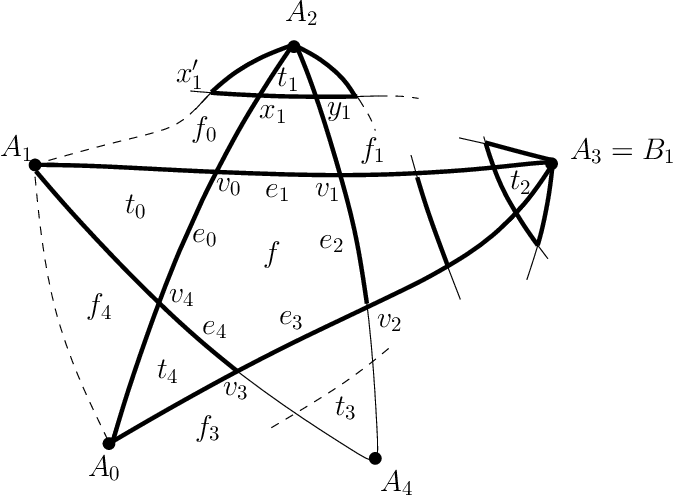}
	\caption{Subcase 1.1 in the proof of Lemma~\ref{lem:ch_6-0-pentagon-1}.
			If the wedge of $t_2$ contains two $0$-quadrilaterals then $t_0$ receives charge from both of its
				neighbors in Step~4, and no charge from $f$.}
	\label{fig:2-in-3_3-in-5_1.1}
\end{figure}
Since $(A_1,B_1)$ has four crossings it follows that $e_0$ is an edge of $t_0$.
Similarly, $e_4$ is an edge of $t_4$.
Note that $f_4$ is incident to $A_0$ and $A_1$ and therefore it contributes charge to $t_4$ in Step~4.
Therefore the other neighbor of $t_4$, $f_3$, must not contribute charge to $t_4$ in Step~4
(otherwise $f$ need not contribute charge to $t_4$ in Step~5).
It follows that $v_3A_4$ and $v_2A_4$ are not edges of $M(G)$.
Thus, the wedge of $t_1$ contains exactly one $0$-quadrilateral
and the size of $f_0$ is at least four.
Recall that $t_0$ receives charge from $f_4$ and from $f$, and therefore it should not receive charge from $f_0$ in Step~4,
which implies that $|f_0|=4$ and $ch_3(f_0) \leq 1/3$.
However, it follows from Proposition~\ref{prop:1-triangle-neighbor} that 
$f_0$ does not contribute charge through $x'_1x_1$ and $x_1v_0$ in Steps~1 and~3,
and therefore if $|f_0|=4$ then $ch_3(f_0) = 2/3$ and $f_0$ does contribute charge
to $t_0$ in Step~4.
%Indeed, $f_0$ does not contribute charge to the $1$-triangle that is a neighbor of $t_1$ in Step~3,
%since one neighbor of this $1$-triangle is incident to two original vertices.
%Also, $f_0$ does not contribute charge through $v_0x_1$ to a $0$- or $1$-triangle in Step~1 or in Step~3,
%for otherwise $f_1$ must be $0$-quadrilateral and the edge of $G$ that contains $x_1y_1$ either crosses %$(A_1,B_1)$
%(in case $f_0$ contributes to a $0$-triangle) or has $A_3$ as an endpoint (in case $f_0$ contributes to a %$1$-triangle).
%However, in both cases there is an edge of $G$ with more than four crossings.
%Thus, $ch_3(f_0)=2/3$ and so $f_0$ contributes charge to $t_0$ in Step~4 (as does $f_4$),
%which implies that $f$ does not contribute charge to $t_0$ in Step~5.

Therefore, each of the wedges of $t_1$ and $t_2$ contains exactly one $0$-quadrilateral.
It follows that $|f_1| \geq 4$.
If $|f_1| \geq 5$, then by Proposition~\ref{prop:e_1-e_2-step3} $f_1$ contributes at least $1/3$ units of charge to $f$  % ***
in Step~6, and so $ch_6(f) \geq 0$.
We assume therefore that $|f_1|=4$, that is, $y'_1$ and $x'_2$ coincide.
If $e_3$ is an edge of $t_3$, then by Proposition~\ref{prop:1/6} $f_2$ sends at least $1/6$ units of charge to $f$ in Step~6 and so $ch_6(f) \geq 0$.
%Suppose that $e_3$ is an edge of $t_3$ and consider the face $f_2$ (see Figure~\ref{fig:0-in-0-2-in-3a}).
\begin{figure}[ht]
    \centering
%    \subfigure[If $e_3$ is an edge of $t_3$, then $f_2$ sends at least $1/6$ units of charge to $f$ in Step~6.]{\label{fig:0-in-0-2-in-3a}
%    {\includegraphics[width=4.5cm]{0-in-0-2-in-3a}}}
%	\hspace{2mm}
    \subfigure[If $f_2$ is a $0$-pentagon, then it does not contribute charge through both $y'_2p$ and $px_3$.]{\label{fig:0-in-0-2-in-3k}
    {\includegraphics[width=4.5cm]{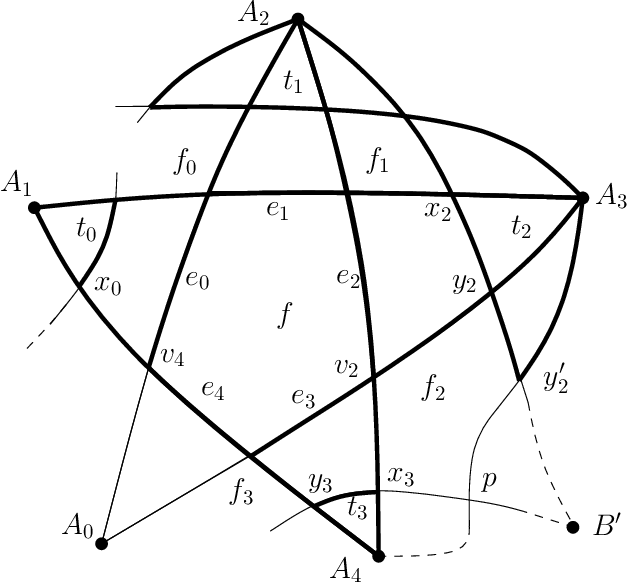}}}
	\hspace{2mm}
    \subfigure[If $f_2$ is a $0$-pentagon that contributes charge through $x_3v_2$ in Step~3,
    then it does not contribute charge through $px_3$ in Step~3.]{\label{fig:0-in-0-2-in-3kk}
    	{\includegraphics[width=4.5cm]{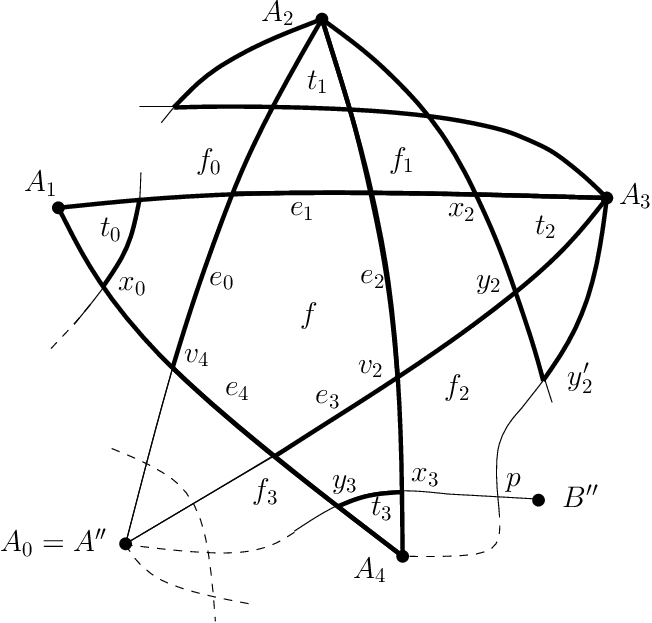}}}
%    \hspace{2mm}
%    \subfigure[If $f_2$ is a $0$-pentagon that contributes charge through $px_3$,%
%				then it does not contribute charge through $qp$.]{\label{fig:0-in-0-2-in-3m}
%    {\includegraphics[width=4.5cm]{0-in-0-2-in-3m}}}
	\caption{Illustrations for Subcase 1.1 in the proof of Lemma~\ref{lem:ch_6-0-pentagon-1}. 
			Both wedges of $t_1$ and $t_2$ contain exactly one $0$-quadrilateral and $|f_1|=4$.}
	\label{fig:0-in-1-2-in-3-1.1a}
\end{figure}
%$|f_2| \geq 4$ for otherwise there would be two parallel edges between $A_2$ and $A_4$.
%Since $e_3$ is an edge of $t_3$, $f_2$ is incident to $A_4$.
%Let $q$ be the vertex that follows $y_2$ in a clockwise order of the vertices of $f_2$.
%Note that $f_2$ contributes at most $1/6$ units of charge through each of its edges that is incident to $A_4$ and also through each of $v_2y_2$ and $y_2y'_2$ by Proposition~\ref{prop:1-triangle-neighbor}.
%Observe also that $f_2$ is not a vertex-neighbor of a $0$-pentagon at $A_4$, $y_2$ and $y'_2$.
%Hence, if $|f_2| \geq 5$, then it follows from Proposition~\ref{prop:step6} 
%that $f_2$ contributes at least $1/6$ units of charge to $f$ and thus $ch_6(f) \geq 0$.
%If $|f_2|=4$, then observe that it does not contribute charge through $y'_2A_4$ since the other face that
%shares this edge with $f_2$ is incident to two original vertices.
%Therefore $f$ also receives $1/6$ units of charge from $f_2$ in Step~6.
%Thus, if $e_3$ is an edge of $t_3$ we have $ch_6(f) \geq 0$.
By symmetry, $ch_6(f) \geq 0$ also if $e_0$ is an edge of $t_0$.

Assume, therefore that $e_3$ is not an edge of $t_3$ and that $e_0$ is not an edge of $t_0$.
It follows that the wedge of each of these $1$-triangles contains exactly one $0$-quadrilateral.
Consider the face $f_2$ and observe that its size is at least four.
We claim that if $|f_2| \geq 5$, then $f_2$ contributes at least $1/6$ units of charge to $f$ in Step~6.
Indeed, suppose that $|f_2| \geq 5$, let $p$ be its vertex that follows $y'_2$ (see Figure~\ref{fig:0-in-0-2-in-3k}).
Note that $f_2$ contributes at most $1/6$ units of charge through $v_2y_2$, $y_2y'_2$ and $y'_2p$ by Proposition~\ref{prop:1-triangle-neighbor} and that $x_3,y_2,y'_2 \notin \mathcal{P}'(f_2)$.
%It also follows from Observation~\ref{obs:extreme} that $f_2$ contributes at most $1/6$ units of charge through $qp$.
Therefore, if $|f_2| \geq 6$, then $f_2$ contributes at least 
$\frac{|f_2|-4-3/6-(|f_2|-3)/3}{|f_2|-3} \geq 1/6$ units of charge to $f$ in Step~6.
If $p \in V(G)$, then $f_2$ contributes at most $1/6$ units of charge through the edges that are incident to $p$ and at least $\frac{|f_2|+1-4-1/3-4/6-(|f_2|-4)/3}{|f_2|-4} \geq 1/6$ units of charge to $f$ in Step~6.

Suppose now that $|f_2|=5$ and $p \notin V(G)$, and let $(A_2,B')$ be the edge of $G$ that contains $x_2y_2$.
Note that $f_2$ may contribute charge through $px_3$ and $y'_2p$ only to $1$-triangles
since each of $(A_2,B_2)$ and $(A_2,B')$ already contains four
crossings among the vertices of $f_1$ and $f_2$.
Moreover, if $f_2$ is a wedge-neighbor of a $1$-triangle at $px_3$ and at $y'_2p$,
then the two neighbors of its wedge-neighbor at $y'_2p$ are incident to two original vertices,
and therefore $f_2$ does not contribute charge through $y'_2p$ (see Figure~\ref{fig:0-in-0-2-in-3k}).
Furthermore, if one of these two wedge-neighbors exists, then $p \notin \mathcal{P}'(f_2)$.
Thus if $p \in \mathcal{P}'(f_2)$, then $ch_5(f_2) \geq 5-4-1/3-2/6 = 1/3$
and so $f$ receives at least $1/6$ units of charge from $f_2$ in Step~6.

Assume therefore that $p \notin \mathcal{P}'(f_2)$, that is $\mathcal{P}(f_2) = \{f\}$.
If $f_2$ contributes at most $1/6$ units of charge through $x_3v_2$, then $ch_5(f_2) \geq 1/6$
and $f$ gets all this excess charge.
Otherwise, suppose that $f_2$ contributes $1/3$ units of charge through $x_3v_2$ and let
$(A'',B'')$ be the edge of $G$ that contains $px_3$ (see Figure~\ref{fig:0-in-0-2-in-3kk}).
Since $(A'',B'')$ has already three crossings ($p$, $x_3$ and $y_3$) it is impossible that $f_2$ contributes charge through $x_3v_2$ in Step~1 (to the $0$-triangle $f_3$), for otherwise
$(A'',B'')$ would have two more crossings (with $(A_3,A_0)$ and $(A_0,A_2)$).
Therefore $f_2$ must contribute through $x_3v_2$ to a $1$-triangle in Step~3 and thus $A''$ and $A_0$ coincide.
Since $(A_4,A_1)$ already has four crossings, it follows that $f_3$ cannot be the $1$-triangle
to which $f_2$ contributes charge through $x_3v_2$,
which in turn implies that $(A'',B'')$ has a crossing point between $A''$ and $y'_3$.
Thus, $p$ is an extreme crossing point on $(A'',B'')$ and therefore $f_2$ does not contribute
charge through $px_3$ in Step~3.
Recall that if $f_2$ contributes charge through $px_3$ then it does not contribute
charge through $y'_2p$.
It follows that $ch_5(f_2) \geq 1/6$ and that $f$ gets all of this excess charge.

By symmetry, if $|f_0| \geq 5$ then $ch_6(f) \geq 0$.
Assume therefore that both $f_0$ and $f_2$ are ($0$-) quadrilaterals.
Consider first the case that $e_4$ is an edge of $t_4$ (see Figure~\ref{fig:0-in-0-2-in-3q}).
\begin{figure}[ht]
    \centering
    \includegraphics[width=5cm]{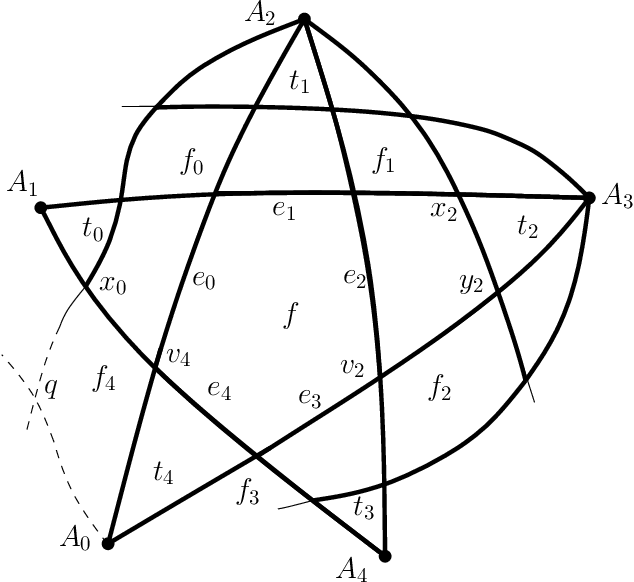}
	\caption{An illustration for Subcase 1.1 in the proof of Lemma~\ref{lem:ch_6-0-pentagon-1}. 
			Each of the wedges of $t_0$, $t_1$, $t_2$ and $t_3$ contains exactly one $0$-quadrilateral, $|f_0|=|f_1|=|f_2|=4$,
			and $e_4$ is an edge of $t_4$.}
	\label{fig:0-in-0-2-in-3q}
\end{figure}
In this case $f_3$ and $f_4$ are the neighbors of $t_4$ and at least one of them does not contribute charge to $t_4$ in Step~4 for otherwise $f$ does not contribute charge to $t_4$ in Step~5.
Assume without loss of generality that $f_4$ does not contribute charge to $t_4$ in Step~4.
Since $f_4$ is not a $1$-triangle (otherwise there would be two parallel edges between $A_0$ and $A_2$),
it follows that $f_4$ is a $1$-quadrilateral and that $ch_3(f_4) \leq 1/3$.
Let $x_0$, $v_4$, $A_0$ and $q$ be the vertices of $f_4$.
$f_4$ cannot contribute charge through $x_0v_4$ in Step~3 by Proposition~\ref{prop:1-triangle-neighbor}.
Similarly, $f_4$ does not contribute charge through $qx_0$ in Step~1 or~3 since $(A_1,A_3)$ would have more than four crossings in the first case
and the edge of $G$ that contains $qx_0$ would have more than four crossings in the second case. 
% Then $f_4$ does not contribute charge through $v_4A_0$. 
% It also contributes at most $1/6$ units of charge through its other edge that is incident to $A_0$ and through  of $x_0v_4$.
% Let $q$ be the vertex of $f_4$ that precedes $x_0$ and note that $f_4$ contributes at most $1/6$ units of charge
% through $qx_0$ and if it contributes that amount of charge then $q \notin \mathcal{P}'(f_4)$.
% Clearly, $x_0,A_0 \notin \mathcal{P}'(f_4)$.
% Therefore $f_4$ contributes at least 
% $\min \{\frac{|f_4|+1-4-1/3-3/6-(|f_2|-4)/3}{|f_2|-3}, \frac{|f_4|+1-4-1/3-2/6-(|f_4|-4)/3}{|f_4|-2} \} \geq 1/6$ 
% units of charge to $f$ in Step~6.
Therefore, $ch_3(f_4) > 1/3$.

It remains to consider the case that $e_4$ is not an edge of $t_4$ (see Figure~\ref{fig:0-in-0-2-in-3r}).
\begin{figure}[ht]
    \centering
    \subfigure[If both $f_3$ and $f_4$ are $0$-quadrilaterals, then each neighbor of $t_4$ is incident to two original vertices
				and therefore $t_4$ does not get charge from $f$.]{\label{fig:0-in-0-2-in-3r}
    {\includegraphics[width=4.5cm]{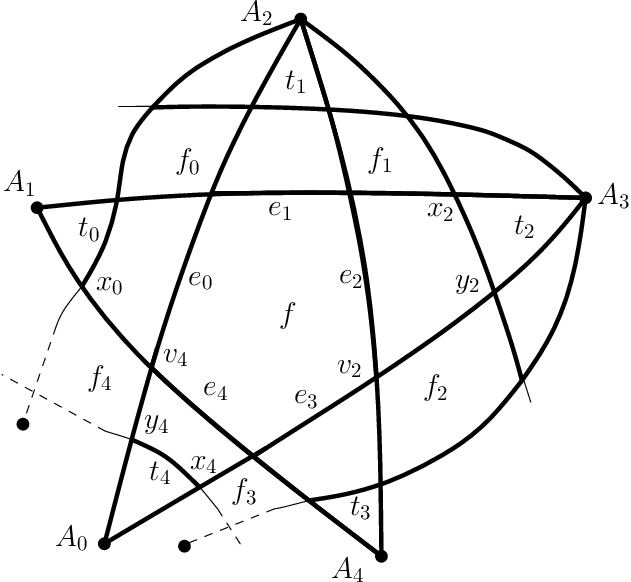}}}
	\hspace{2mm}
    \subfigure[If $f_4$ is a $1$-quadrilateral, then it sends $f$ at least $1/3$ units of charge in Step~6.]{\label{fig:0-in-0-2-in-3u}
    {\includegraphics[width=4.5cm]{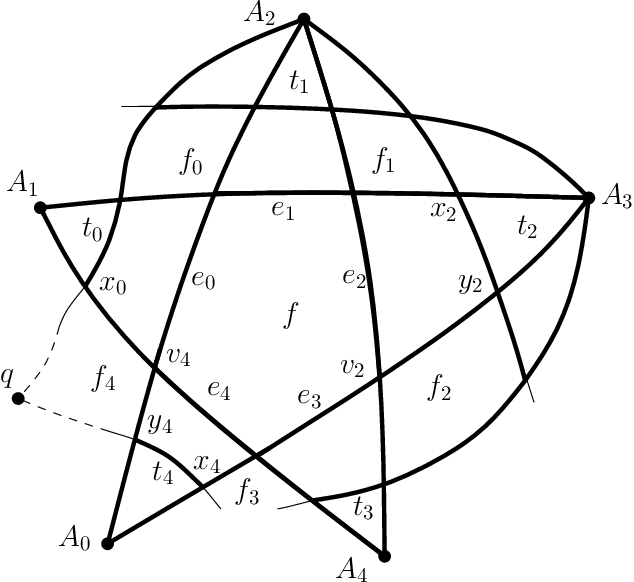}}}
	\hspace{2mm}
    \subfigure[If $|f_4| \geq 5$, then $f_4$ sends $f$ at least $1/6$ units of charge in Step~6.]{\label{fig:0-in-0-2-in-3w}
    {\includegraphics[width=4.5cm]{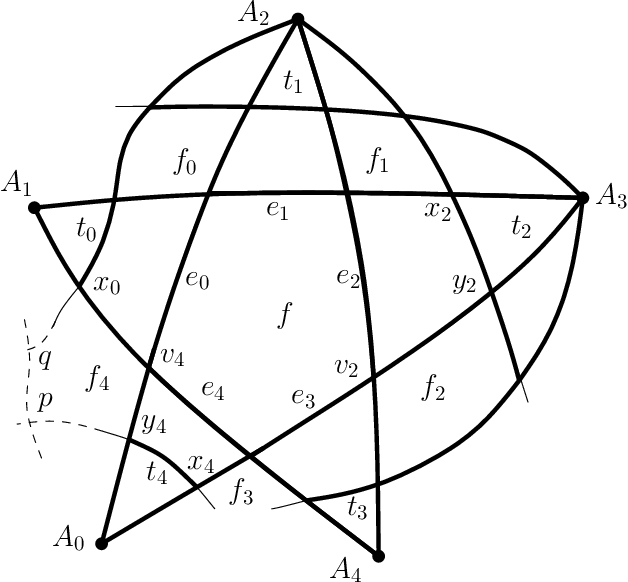}}}
	\caption{Illustrations for Subcase 1.1 in the proof of Lemma~\ref{lem:ch_6-0-pentagon-1}. 
			Each of the wedges of $t_0$, $t_1$, $t_2$, $t_3$ and $t_4$ contains exactly one $0$-quadrilateral and $|f_0|=|f_1|=|f_2|=4$.}
	\label{fig:0-in-1-2-in-3-1.1b}
\end{figure}
In this case the wedge of $t_4$ contains exactly one $0$-quadrilateral (two quadrilaterals would yield more than four crossings on $(A_0,A_2)$).
Note that the size of $f_3$ and $f_4$ is at least four, for otherwise the edge of $G$ that contains $x_4y_4$ would have more than four crossings.
Observe also that if $f_3$ (resp., $f_4$) is a $0$-quadrilateral, 
then the face that is incident to it and to $t_4$ has two original vertices on its boundary.
Therefore, it is impossible that both $f_3$ and $f_4$ are $0$-quadrilaterals, since then
$t_4$ would receive $1/6$ units of charge from each of its neighbors and no charge from $f$.
Assume without loss of generality that $f_4$ is not a $0$-quadrilateral and let $q$ be its vertex that precedes $x_0$.
Note that $f_4$ contributes at most $1/6$ units of charge through $x_0v_4$ and $v_4y_4$.
If $f_4$ is a $1$-quadrilateral (see Figure~\ref{fig:0-in-0-2-in-3u}), then it contributes no charge through $qx_0$ and $y_4q$ (since each of its neighbors at these edges is incident to two original vertices)
and it follows that $f_4$ sends at least $4+1-4-1/3-2/6=1/3$ units of charge to $f$ in Step~6.
Assume therefore that $|f_4| \geq 5$ and let $p$ be its vertex that follows $y_4$ (see Figure~\ref{fig:0-in-0-2-in-3w}).
Note that $f_4$ contributes at most $1/6$ units of charge through $qx_0$,
and if it does contribute charge through this edge, then $q \notin \mathcal{P}'(f_4)$.
Clearly, $x_0,y_4 \notin \mathcal{P}'(f_4)$.
Therefore if the size of $f_4$ is at least six, then it contributes at least
$\min \{\frac{|f_4|-4-3/6-(|f_4|-3)/3}{|f_4|-3}, \frac{|f_4|-4-2/6-(|f_4|-3)/3}{|f_4|-2} \} \geq 1/6$ 
units of charge to $f$ in Step~6.

Suppose that $|f_4|=5$.
If $f_4$ contributes charge through $y_4p$, then it must be to a $1$-triangle whose vertices are $y_4$, $p$ and $A_0$ and thus $p \notin \mathcal{P}'(f_4)$.
Similarly, if $f_4$ contributes charge through $qp$ or $qx_0$, then if must be to a $1$-triangle and then $q \notin \mathcal{P}'(f_4)$.
Therefore, if $f_4$ does not contribute $1/3$ units of charge through one of its edges, then it sends at least $1/6$ units of charge to $f$ in Step~6.

Recall that $f_4$ cannot contributes $1/3$ units of charge through $x_0v_4$, $v_4y_4$ and $qx_0$.
If $f_4$ contributes $1/3$ units of charge through $y_4p$, then it must be to a $1$-triangle whose vertices are $p$, $y_4$ and $A_0$.
This implies that $p$ is not an extreme crossing point on the edge of $G$ that contains $x_4y_4$.
It follows that $f_3$ cannot be a $0$-quadrilateral, and thus $f_4$ does not contribute charge through $v_4y_4$.
If $f_4$ contributes $1/3$ units of charge through $pq$, then it must be to a $1$-triangle in Step~3
since the edge of $G$ that contains $qx_0$ already contains four crossings.
This implies that the neighbor of $t_0$ that is also an immediate neighbor of $f_4$ at $qx_0$ is not a $1$-triangle or a $0$-quadrilateral,
and so $f_4$ does not contribute charge through $qx_0$.
In all of these cases we conclude that $f_4$ sends at least $1/6$ units of charge to $f$ in Step~6, and so $ch_6(f) \geq 0$.
This concludes Subcase~1.1.

\medskip\noindent\underline{Subcase 1.2:} $f$ contributes charge to $t_1$ and $t_3$ in Step~3
and to $t_2,t_4,t_0$ in Step~5.
Consider first the case that $e_1$ is an edge of $t_1$ and refer to Figure~\ref{fig:2-in-3_3-in-5_4}.
\begin{figure}[ht]
    \centering
    \subfigure[$e_1$ is an edge of $t_1$. $f_2$ contributes charge to $f$ in Step~6.]{\label{fig:2-in-3_3-in-5_4}
    {\includegraphics[width=5cm]{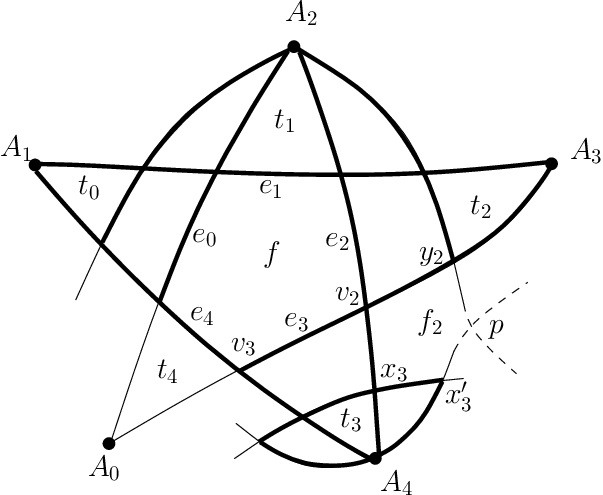}}}
	\hspace{2mm}
%    \subfigure[If $f_1$ and $f_2$ are both $1$-quadrilaterals then $f$ does not contribute charge through $e_2$.]{\label{fig:2-in-3_3-in-5_6}
%    {\includegraphics[width=4.5cm]{2-in-3_3-in-5_6}}}
%	\hspace{2mm}
%    \subfigure[If the wedge of $t_2$ contains two $0$-quadrilaterals,
%				then $f$ does not contribute charge to $t_0$ in Step~5.]{\label{fig:2-in-3_3-in-5_7}
%    {\includegraphics[width=4.5cm]{2-in-3_3-in-5_7}}}
    \subfigure[The wedges of all wedge-neighbors of $f$ contain one $0$-quadrilateral. $f_1$ or $f_2$ is not a $0$-quadrilateral and contributes charge to $f$ in Step~6.]{\label{fig:2-in-3_3-in-5_9}
	    {\includegraphics[width=5cm]{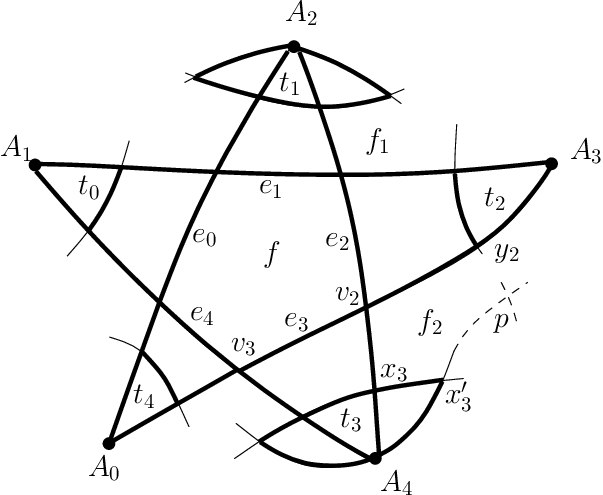}}}	\caption{Subcase 1.2 in the proof of Lemma~\ref{lem:ch_6-0-pentagon-1}:
				$f$ contributes $1/3$ units of charge in Step~3 through each of $e_1$ and $e_3$.}
	\label{fig:2-in-3_3-in-5_1.2}
\end{figure}
Since the neighbors of $t_1$ should be $1$-triangles, it follows that
the wedges of $t_2$ and $t_0$ each contain exactly one $0$-quadrilateral.
It is impossible that $e_3$ is an edge of $t_3$,
as this would imply two parallel edges between $A_2$ and $A_4$ (note that $B_2$ and $A_4$ coincide).
If the wedge of $t_3$ contains two $0$-quadrilaterals, then there are more than four crossings on $(A_1,A_4)$.
Therefore, the wedge of $t_3$ contains exactly one $0$-quadrilateral.

Consider the face $f_2$ and refer to Figure~\ref{fig:2-in-3_3-in-5_4}.
Observe that $|f_2|>4$, for if $|f_2|=4$ then $G$ has two parallel edges between $A_2$ and $A_4$.
Let $p$ be the vertex of $f_2$ that follows $y_2$.
It follows from Proposition~\ref{prop:1-triangle-neighbor} that $f_2$
contributes at most $1/6$ units of charge through each of $v_2y_2$, $x_3v_2$, $x'_3x_3$
and its other edge that is incident to $x'_3$.
Note also that $f_2$ is not a vertex-neighbor of a $0$-pentagon at $x'_3$, $x_3$ and $y_2$.
Therefore, if $|f_2| \geq 6$, then $f_2$ contributes at least $\frac{|f_2|-4-4/6-(|f_2|-4)/3}{|f_2|-3} \geq 1/6$ units of charge to $f$ in Step~6.

Suppose that $|f_2|=5$.
If $p$ is a vertex of $G$, then $\mathcal{P}'(f_2)=\{v_2\}$ and $f_2$ contributes at most $1/6$ units of charge through each of its edges that are incident to $p$.
Thus, $f_2$ contributes at least $|f_2|+1-4-1/3-5/6 \geq 1/6$ units of charge to $f$ in Step~6.
Assume therefore that $p$ is a crossing point.
Note that if $f_2$ contributes charge through one of the edges that are incident to $p$, then it must be to a $1$-triangle and it follows that $p \notin \mathcal{P}'(f_2)$.
Furthermore, if $f_2$ contributes $1/3$ units of charge through $y_2p$, then
it must be contributed to a $1$-triangle whose vertices are $y_2$, $p$ and $A_3$, and whose neighbors are $1$-triangles.
Therefore, in such a case $f_2$ does not contribute charge through $px'_3$,
because the face that shares this edge with $f_2$ has at least three crossing points and one original vertex of $G$ as vertices. 
It follows that in Step~6 $f_2$ contributes to $f$ at least $1/6$ units of charge and $f$ ends up with a non-negative charge.

% If $|f_2| \geq 6$ then on the clockwise chain from $y_2$ to $q$ there are $|f_2|-3$ edges
% and at most $|f_2|-4$ vertices through which $f_2$ contributes charge in Step~6.
% Since $f_2$ contributes at most $1/6$ units of charge through $v_2y_2$, $x_3q$, and $x_3v_2$,
% it contributes at least $\frac{|f_2|-4-3/6-(|f_2|-3)/3}{|f_2|-3} \geq 1/6$
% units of charge to every face in $\mathcal{P}(f_2)$ in Step~6.
% If $|f_2|>5$ then every `extra' edge might add a $0$-pentagon to $\mathcal{P}(f_2)$ and $f_2$ might contribute at most $1/3$ through it,
% but it also increases the initial charge of $f_2$ by one, and therefore, still every $0$-pentagon in $\mathcal{P}(f_2)$
% (including $f$) will receive at least $1/6$ units of charge from $f_2$ in Step~6.

The case that $e_3$ is an edge of $t_3$ is symmetric, therefore
we assume now that $e_1$ is not an edge of $t_1$ and $e_3$ is not an edge of $t_3$. %(refer to Figure~\ref{fig:2-in-3_3-in-5_6}).
Observe that the wedges of $t_1$ and $t_3$ must contain exactly one $0$-quadrilateral each, for otherwise $(A_2,A_4)$ has more than four crossings.
If one of the wedges of $t_0$, $t_2$ and $t_4$ contains no $0$-quadrilaterals,
then it follows from Proposition~\ref{prop:1/6} that $f$ gets at least $1/6$ units
of charge from one of its vertex-neighbors at Step~6 and ends up with a non-negative charge.

Assume therefore that each of the wedges of $t_0$, $t_2$ and $t_4$ contains exactly one $0$-quadrilateral (if one of them contains two $0$-quadrilaterals, then another one contains no $0$-quadrilaterals or there is an edge with more than four crossings).
Refer to Figure~\ref{fig:2-in-3_3-in-5_9} and note that each of $f_1$ and $f_2$ has at least four crossing points as vertices 
(e.g., $f_2$ is incident to $x'_3$, $x_3$, $v_2$ and $y_2$).
At least one of these faces is not a $0$-quadrilateral, for otherwise there would be two parallel edges between $A_2$ and $A_4$.
Assume without loss of generality that $|f_2| \geq 5$ and observe that similarly
to the case above in which $e_1$ was an edge of $t_1$, it follows that $f_2$ contributes at least $1/6$ units of charge to $f$ in Step~6.

\bigskip
\noindent\underline{Case 2:} $ch_3(f) = 0$ and $ch_5(f) < 0$.
That is, $f$ contributes $1/3$ units of charge to exactly three $1$-triangles in Step~3,
and contributes $1/6$ units of charge to one or two $1$-triangles in Step~5.
We may assume without loss of generality that $f$ contributes charge through $e_1$ and $e_2$ in Step~3,
and consider two subcases.

\medskip
\noindent\underline{Subcase 2.1:} $f$ contributes charge to $t_3$ in Step~3.
Observe that none of the $1$-triangles $t_1,t_2,t_3$ can share an edge (of $M(G)$) with $f$
according to Proposition~\ref{prop:e_1-e_2}.
Moreover, the wedges of $t_1$ and $t_3$ must contain exactly one $0$-quadrilateral,
for otherwise $(A_2,A_4)$ has more than four crossings.
If there is exactly one $0$-quadrilateral in the wedge of $t_2$,
then by Corollary~\ref{cor:f_1-or-f_2} $f_1$ or $f_2$ contributes 
at least $1/3$ units of charge to $f$ in Step~6 and so $ch_6(f) \geq 0$.

Therefore, assume that there are two $0$-quadrilaterals in the wedge of $t_2$ and
refer to Figure~\ref{fig:3-in-3-2.1}.
\begin{figure}[ht]
    \centering
    \subfigure[Subcase 2.1: $f$ contributes charge to $t_1$, $t_2$ and $t_3$ in Step~3.
				If there are two $0$-quadrilaterals in the wedge of $t_2$, then each of  $f_0$ and $f_3$ contribute at least $1/6$ units of charge to $f$ in Step~6.]{\label{fig:3-in-3-2.1}
    {\includegraphics[width=5cm]{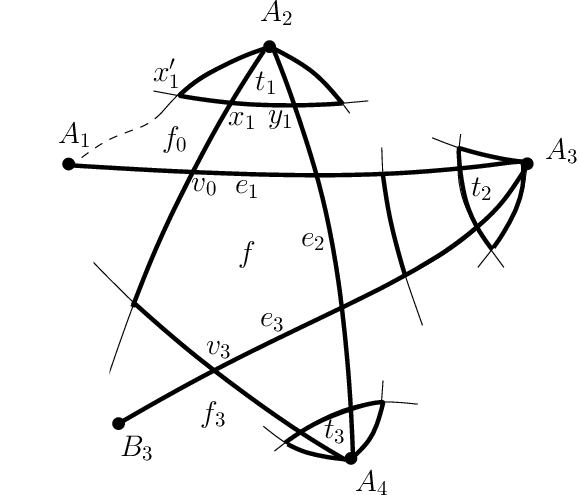}}}
	\hspace{2mm}
    \subfigure[Subcase 2.2: $f$ contributes charge to $t_1,t_2,t_4$ in Step~3.
				If there are two $0$-quadrilaterals in the wedge of $t_2$, then $f_0$ 
				contributes at least $1/6$ units of charge to $f$ in Step~6.]{\label{fig:3-in-3-2.2}
    {\includegraphics[width=5cm]{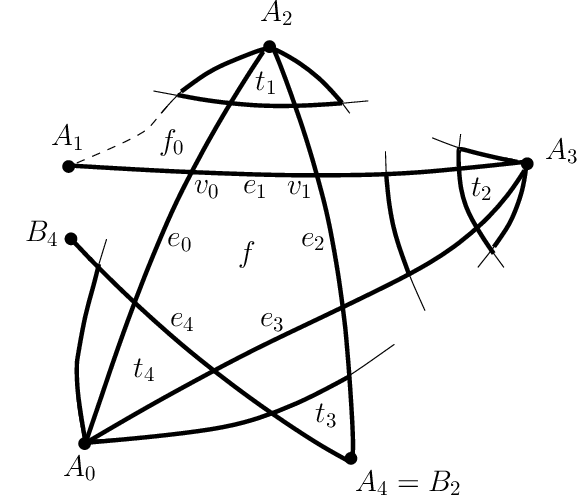}}}
	\caption{Illustrations for Case 2 in the proof of Lemma~\ref{lem:ch_6-0-pentagon-1}.}
	\label{fig:3-in-3-2}
\end{figure}
It follows that $A_1v_0$ is an edge of $f_0$ and $B_3v_3$ is an edge of $f_3$.
Consider $f_0$ and observe that $|f_0| \geq 4$.
Note also that $f_0$ does not contribute any charge through $x_1v_0$, as this would imply 
that the edge of $G$ that contains $x_1y_1$ has more than four crossings.
Note that $f_0$ contributes at most $1/6$ units of charge through each of its edges that are incident to $A_1$, and, by Proposition~\ref{prop:1-triangle-neighbor}, also through $x'_1x_1$.
Since $A_1,x'_1,x_1 \notin \mathcal{P}'(f_0)$, $f_0$ contributes at least
$\frac{|f_0|+1-4-1/3-3/6-(|f_0|-4)/3}{|f_0|-3} \geq 1/6$ units of charge to $f$ in Step~6.
By symmetry, so does $f_3$, and therefore $f$ ends up with a non-negative charge.
%
%If $f_0$ is a $1$-quadrilateral (as in Figure~\ref{fig:3-in-3-2.1}), then it
%does not contribute charge through $A_1x'_1$ since its neighbor at this edge is incident to two original vertices. Therefore $ch_5(f_0) \geq 1/3$ in this case,
%and since $A_1,x'_1,x_1 \notin \mathcal{P}'(f_0)$, $f_0$ sends $1/3$ units of charge to $f$ is Step~6.
%
%If $|f_0| \geq 5$ then consider the clockwise chain from $A_1$ to $z$, 
%and observe that it contains $|f_0|-3$ edges
%and at most $|f_0|-4$ vertices through which $f_0$ sends charge in Step~6.
%Therefore, every face in $\mathcal{P}(f_0)$ (including $f$) receives from $f_0$ in Step~6 at least
%$\frac{|f_0|-4+1-2/6-1/3-(|f_0|-3)/3}{|f_0|-3} \geq 1/3$ units of charge.
%
% If $|f_0| > 4$, then every `extra' edge might add a $0$-pentagon to $\mathcal{P}(f_0)$ and $f_0$ might contribute at most $1/3$ through it,
% but it also increases the initial charge of $f_0$ by one, and therefore, still every $0$-pentagon in $\mathcal{P}(f_0)$
% (including $f$) will receive at least $1/3$ units of charge from $f_0$ in Step~6.
%By symmetry $f_3$ also contributes at least $1/3$ units of charge to $f$ in Step~6
%and therefore $f$ ends up with a non-negative charge.
%

\smallskip\noindent\underline{Subcase 2.2:} 
$f$ contributes charge through each of $e_1$, $e_2$ and $e_4$ in Step~3,
and through at least one of $e_0$ and $e_3$ in Step~5.
Observe that none of the $1$-triangles $t_1,t_2$ can share an edge (of $M(G)$) with $f$
according to Proposition~\ref{prop:e_1-e_2}.
We claim that we may assume that each of the wedges of $t_1$ and $t_2$ contains exactly one $0$-quadrilateral.
Indeed, assume without loss of generality that the wedge of $t_2$ contains two $0$-quadrilaterals
and refer to Figure~\ref{fig:3-in-3-2.2}.
Since each of $(A_0,A_3)$ and $(A_1,A_3)$ contains at most four crossings, $e_4$ must be an edge of $t_4$ and $v_0A_1$ must be an edge of $f_0$.
It follows that $f$ can not contribute charge through $e_0$ (since its immediate neighbor at this edge is incident to three crossing points and to $A_1$) and thus it must contribute charge through $e_3$.
Hence, $A_4=B_2$ and the wedge of $t_1$ must contain exactly one $0$-quadrilateral.
It follows, as in the analysis in Subcase~2.1 (see Figure~\ref{fig:3-in-3-2.1}), that $f_0$ contributes at least $1/6$ units of charge to $f$ in Step~6, and thus $ch_6(f) \geq 0$. 

Therefore, each of the wedges of $t_1$ and $t_2$ contains exactly one $0$-quadrilateral.
If $f_1$ is not a $0$-quadrilateral, then it follows from Proposition~\ref{prop:e_1-e_2-step3}
that $f_1$ sends at least $1/3$ units of charge to $f$ in Step~6 and so $ch_6(f) \geq 0$.
Assume therefore that $f_1$ is a $0$-quadrilateral.

Consider the case that the wedge of $t_4$ contains no $0$-quadrilaterals and refer to Figure~\ref{fig:0-in-0-3-in-3a}.
Suppose that $f$ contributes charge through $e_3$ in Step~5.
We claim that in this case $f_2$ contributes at least $1/6$ units of charge to $f$ in Step~6.
Observe that $|f_2| \geq 5$, since otherwise if $|f_2|=4$ there are two parallel edges between $A_0$ and $A_3$.
Let $q$ be the vertex of $f_2$ that follows $y'_2$.
By Proposition~\ref{prop:1-triangle-neighbor} $f_2$ contributes at most $1/6$ units of charge through each of $x_3v_2$, $v_2y_2$, $y_2y'_2$, and $y'_2q$.
Note also that $x_3,y_2,y'_2 \notin \mathcal{P}'(f_2)$.
Therefore, if $|f_2| \geq 6$, then $f_2$ contributes at least $\frac{|f_2|-4-4/6-(|f_2|-4)/3}{|f_2|-3} \geq 1/6$ units of charge to $f$ in Step~6.

If $|f_2|=5$, then observe that if $f_2$ contributes $1/3$ units of charge through $qx_3$ then it implies that $f_2$ does not contribute any charge through $y'_2q$.
Furthermore, if $f_2$ contributes charge through any of these two edges, then $q \notin \mathcal{P}'(f_2)$.
It follows that if $|f_2|=5$, then $f_2$ also contributes at least $1/6$ units of charge to $f$ in Step~6.

By symmetry, if $f$ contributes $1/6$ units of charge through $e_0$ in Step~5, then it receives at least $1/6$ units of charge from $f_0$ in Step~6,
and so $f$ ends up with a non-negative charge.

It remains to consider the case that the wedge of $t_4$ contains exactly one $0$-quadrilateral
(note that it cannot contain two quadrilaterals, for otherwise $(A_0,A_3)$ would contain five crossings).
Suppose that $f$ contributes charge through $e_3$ in Step~5.
Then the wedge of $t_3$ contains at most one $0$-quadrilateral, for otherwise $(A_2,A_4)$ would contain five crossings.
If the wedge of $t_3$ contains no $0$-quadrilateral, then by Proposition~\ref{prop:1/6} $f_3$ sends at least $1/6$ units of charge to $f$ in Step~6.

%We claim that the wedge of $t_3$ must contain one $0$-quadrilateral.
%Indeed, suppose it does not, that is, $e_3$ is an edge of $t_3$ and refer to %Figure~\ref{fig:0-in-0-3-in-3b}.
\begin{figure}[ht]
    \centering
    \subfigure[$e_4$ is an edge of $t_4$, and $f$ contributes charge through $e_3$]{\label{fig:0-in-0-3-in-3a}
    {\includegraphics[width=4.5cm]{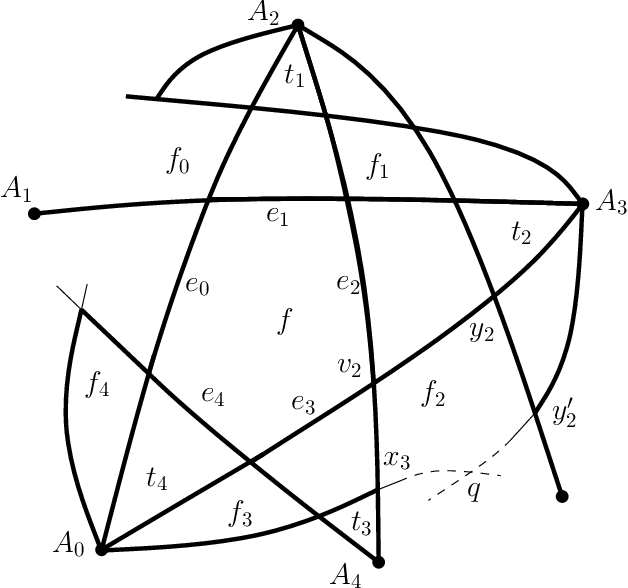}}}
	\hspace{2mm}
%    \subfigure[$e_4$ is not an edge of $t_4$ and $e_3$ is not an edge of $t_3$.]{\label{fig:0-in-0-3-in-3b}
%    {\includegraphics[width=4.5cm]{0-in-0-3-in-3b}}}
%	\hspace{2mm}
    \subfigure[Each of the wedges of $t_1$, $t_2$, $t_3$ and $t_4$ contains one $0$-quadrilateral. 
			If $|f_3| \geq 4$ then $f_3$ contributes at least $1/6$ units of charge to $f$.]{\label{fig:0-in-0-3-in-3c}
    {\includegraphics[width=4.5cm]{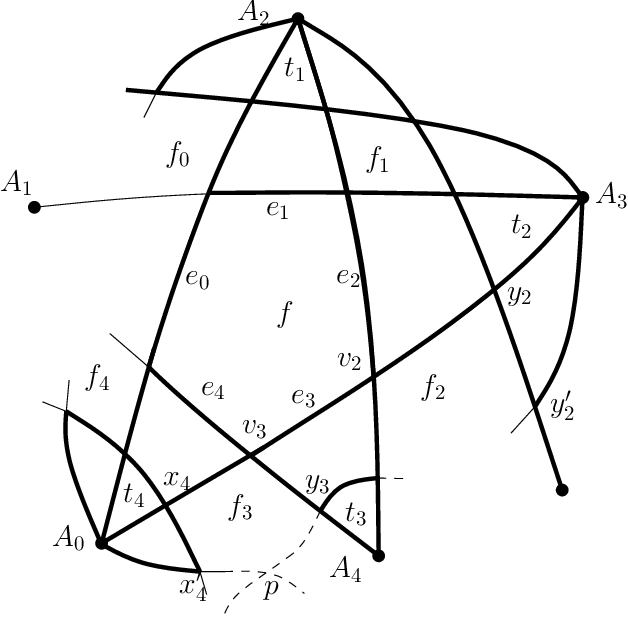}}}
	\caption{Illustrations for Subcase 2.2 in the proof of Lemma~\ref{lem:ch_6-0-pentagon-1}.
				$f$ contributes charge through $e_1$, $e_2$ and $e_4$ in Step~3;
				the wedges of $t_1$ and $t_2$ each contains one $0$-quadrilateral;
				and $f_1$ is a $0$-quadrilateral.}
	\label{fig:3-in-3-3}
\end{figure}
%Note that the neighbors of $t_3$ are $f_2$ and $f_3$, and that the size of each of them is at least four.
%$f_2$ does not contribute charge through any of its edges that are not incident to $A_4$ is Step~1 or~3,
%and therefore it follows that $f_2$ contributes charge to $t_4$ in Step~4.
%Considering $f_3$, observe that it does not contribute charge in Step~1 or~3 through the edge
%that it shares with a neighbor of $t_4$, since this neighbor is a $1$-triangle that has a neighbor 
%which is incident to two original vertices.
%$f_3$ does not contribute charge in Step~1 or~3 through $x_4v_3$ either, since if it does,
%then it must be to a $1$-triangle $t'$ in Step~3 and not to a $0$-triangle in Step~1 (otherwise there would be five crossings on the edge of $G$ that contains $x_4y_4$).
%However, this $1$-triangle has a neighbor that is incident to two original vertices.
%Thus, $f_3$ also contributes charge to $t_3$ in Step~4 and therefore $f$ does not contribute charge to $t_3$ in Step~4.

Therefore, assume that the wedge of $t_3$ contains exactly one $0$-quadrilateral and refer to Figure~\ref{fig:0-in-0-3-in-3c}.
Note that the size of $f_2$ and $f_3$ is at least four, and it is impossible that both of them are $0$-quadrilaterals,
for then there would be two parallel edges between $A_0$ and $A_3$.
Suppose that $f_3$ is not a $0$-quadrilateral, and let $p$ be its vertex that precedes $x'_4$.
It follows from Proposition~\ref{prop:1-triangle-neighbor} that $f_3$ contributes at most $1/6$ units of charge through each of $px'_4$, $x'_4x_4$ and $x_4v_3$.
Note also that $f_3$ may contribute at most $1/6$ units of charge through $v_3y_3$
(to the $1$-triangle whose vertices are $A_3$, $y_2$ and $y'_2$),
and that $x'_4,x_4,y_3 \notin \mathcal{P}'(f_3)$.
Therefore, if $|f_3| \geq 6$, then $f_3$ contributes at least $\frac{|f_3|-4-4/6-(|f_3|-4)/3}{|f_3|-3} \geq 1/6$ units of charge to $f$ in Step~6.
If $|f_3|=5$, then observe that $f_3$ contributes at most $1/6$ units of charge through $px'_4$ and that in such a case it cannot contribute $1/3$ units of charge through $y_3p$.
Furthermore, if $f_3$ contributes charge through any of these two edges, then $p \notin \mathcal{P}'(f_2)$.
It follows that if $|f_3|=5$, then $f_3$ contributes at least $1/6$ units of charge to $f$ in Step~6.

If $f_3$ is a $0$-quadrilateral then $f_2$ is not a $0$-quadrilateral.
In this case, as in Subcase~1.2 (see Figure~\ref{fig:2-in-3_3-in-5_4}),
we conclude that $f_2$ sends at least $1/6$ units of charge to $f$ in Step~5.

By symmetry, if $f$ contributes $1/6$ units of charge through $e_0$ in Step~5, then it receives at least $1/6$ units of charge from $f_0$ or $f_4$ in Step~6,
and so $f$ ends up with a non-negative charge.
This concludes Case~2.

\bigskip
\noindent\underline{Case 3:} $ch_3(f) = -1/3$.
That is, $f$ contributes $1/3$ units of charge to exactly four $1$-triangles in Step~3,
and contributes $1/6$ units of charge to zero or one $1$-triangles in Step~5.
We may assume without loss of generality that $f$ contributes charge through $e_0$, $e_1$, $e_2$ and $e_3$ in Step~3.

It follows from Proposition~\ref{prop:e_1-e_2} and the maximum number of crossings per edge,
that each of the wedges of $t_0$, $t_1$, $t_2$ and $t_3$ contains exactly one $0$-quadrilateral.
Therefore, by Corollary~\ref{cor:f_1-or-f_2} $f_0$ or $f_1$ contributes 
at least $1/3$ units of charge to $f$ in Step~6 and so does $f_1$ or $f_2$.
Suppose that $ch_6(f)<0$. 
Then it follows that $f$ receives charge from $f_1$ and no charge from $f_0$ and $f_2$ and thus these faces must be $0$-quadrilaterals (following Proposition~\ref{prop:e_1-e_2-step3}).
Furthermore, $f$ must contribute charge through $e_4$ in Step~5.

If the wedge of $t_4$ contains no $0$-quadrilaterals, then it follows from Proposition~\ref{prop:1/6} that $f$ receives at least $1/6$ units of charge from each of $f_3$ and $f_4$ and thus $ch_6(f) \geq 0$.

%Consider first the case that $e_4$ is an edge of $t_4$ and refer to %Figure~\ref{fig:0-in-0-4-in-3a}.
%\begin{figure}[ht]
%    \centering
%    \includegraphics[width=5cm]{0-in-0-4-in-3a}
%	\caption{An illustration for Case 3 in the proof of Lemma~\ref{lem:ch_6-0-pentagon-1}:
%			$f$ contributes charge through $e_0$, $e_1$, $e_2$ and $e_3$ in Step~3,
%			$f_0$ and $f_2$ are $0$-quadrilaterals, and $e_4$ is an edges of $t_4$.}
%	\label{fig:0-in-0-4-in-3a}
%\end{figure}
%It follows that $f_3$ and $f_4$ are the neighbors of $t_4$.
%Since $f$ receives charge from $f$ in Step~5, at least one of the neighbors of $t_4$ does not contribute charge to $t_4$ in Step~4.
%Assume without loss of generality that $f_4$ does not contribute charge to $t_4$ in Step~4.
%Note that $f_4$ cannot be a $1$-triangle, for otherwise there would be two parallel edges between $A_0$ and $A_2$.
%It follows that $f_4$ must be a $1$-quadrilateral and $ch_3(f_4)=1/3$.
%Denote by $q$ its vertex that precedes $x_0$ and let $(A',A_2)$ be the edge of $G$ that contains $qx_0$.
%Note that $f_4$ does not contribute charge through $x_0v_4$ in Steps~1 or~3,
%since it is the wedge-neighbor of a $1$-triangle that has one neighbor which is incident to two original vertices ($A_1$ and $A_2$).
%Similarly, $f_4$ does not contribute charge through $qx_0$ in Steps~1 or~3,
%since the face that shares this edge with $qx_0$ is incident to $A_1$,
%and if this face is a $1$-triangle, then one of its neighbors is incident to both $A_1$ and $A'$.
%Therefore $ch_3(f_4)=2/3$, a contradiction.

If the wedge of $t_4$ contains two $0$-quadrilaterals, 
then $(A_0,A_3)$ would contain five crossings.
Assume therefore that the wedge of $t_4$ contains exactly one $0$-quadrilateral (see Figure~\ref{fig:0-in-0-4-in-3c}).
\begin{figure}[ht]
    \centering
    \includegraphics[width=5cm]{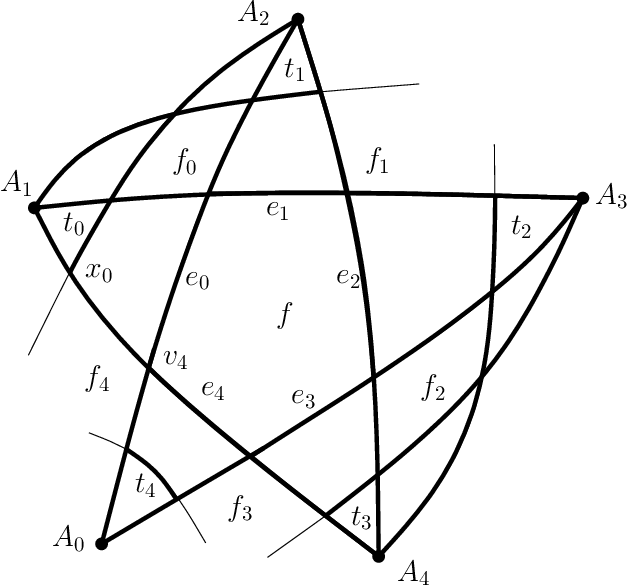}
	\caption{An illustration for Case 3 in the proof of Lemma~\ref{lem:ch_6-0-pentagon-1}:
			$f$ contributes charge through $e_0$, $e_1$, $e_2$ and $e_3$ in Step~3,
			$f_0$ and $f_2$ are $0$-quadrilaterals, and the wedge of $t_4$ contains one $0$-quadrilateral.}
	\label{fig:0-in-0-4-in-3c}
\end{figure}
In this case, similarly to Subcase~1.1 (see Figure~\ref{fig:0-in-1-2-in-3-1.1b}),
it is impossible that both $f_3$ and $f_4$ are $0$-quadrilaterals,
and we conclude that one of these faces contributes at least $1/6$ units of charge to $f$ in Step~6, and thus $ch_6(f) \geq 0$.

\bigskip
\noindent\underline{Case 4:} $ch_3(f) = -2/3$.
That is, $f$ contributes $1/3$ units of charge through each of its edges in Step~3.
It follows from Proposition~\ref{prop:e_1-e_2} and the maximum number of crossings per edge,
that each of the wedges of $t_0$, $t_1$, $t_2$, $t_3$ and $t_4$ contains exactly one $0$-quadrilateral.
Therefore, by Corollary~\ref{cor:f_1-or-f_2} $f_0$ or $f_1$ contributes 
at least $1/3$ units of charge to $f$ in Step~6 and so does $f_2$ or $f_3$.
Therefore $f$ ends up with a non-negative charge.

This concludes the proof of Lemma~\ref{lem:ch_6-0-pentagon-1}.
\end{proof}

It follows from Proposition~\ref{prop:ch_5} and Lemmas~\ref{lem:ch_6-0-pentagon-negative}, \ref{lem:ch_6-0-pentagon-0}, 
\ref{lem:ch_6-0-pentagon-1/3}, \ref{lem:ch_6-0-pentagon-2/3}, and~\ref{lem:ch_6-0-pentagon-1}
that the final charge of every face in $M(G)$ is non-negative.
Recall that the charge of every original vertex of $G$ is $1/3$,
and that the total charge is $4n-8$.
Therefore, $2|E(G)|/3=\sum_{A \in V(G)} \deg(v)/3 \leq 4n-8$ and thus $|E(G)| \leq 6n-12$.

\paragraph{A lower bound.}
To see that the bound in Theorem~\ref{thm:4crossings} is tight up to an additive constant
we use the following construction of Pach et al.~\cite[Proposition~2.8]{PR+06}.
For an integer $l \geq 2$, we set $n=6l$ and tile a vertical cylindrical surface with $l-1$ horizontal layers
each consisting of three hexagonal faces that are wrapped around the cylinder.
The top and bottom of the cylinder are also tiled with hexagonal faces.
See Figure~\ref{fig:cylinder} for an illustration of this construction.
\begin{figure}[ht]
    \centering
    \subfigure[Tiling a vertical cylinder surface with horizontal layers each consisting of three hexagons.
				The top and bottom are also tiled with hexagons.]{\label{fig:cylinder}
    {\includegraphics[width=4.5cm]{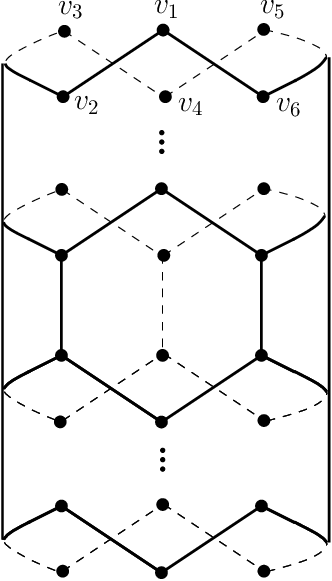}}}
    \hspace{20mm}
    \subfigure[Drawing edges in the top face to get an almost tight lower bound for Theorem~\ref{thm:4crossings}.]{\label{fig:top-face}
    {\includegraphics[width=4cm]{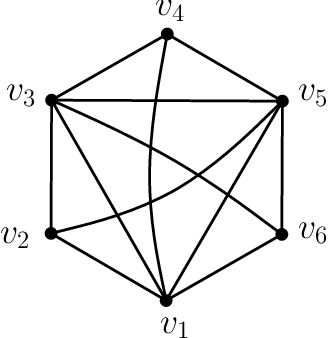}}}
	\caption{A lower bound construction.}
	\label{fig:lower-bound}
\end{figure}
Note that every vertex is adjacent to exactly three hexagons,
except for three vertices of the top face ($v_1,v_3,v_5$ in Figure~\ref{fig:cylinder}) and three vertices of the bottom face
that are adjacent to two hexagons.
On the top and bottom hexagons we draw edges between every two vertices
unless both of them are adjacent to exactly three hexagons (see Figure~\ref{fig:top-face}).
For the rest of the hexagons we draw all the possible edges.
Thus, every edge in the drawing is indeed crossed at most four times.
Note that the degree of every vertex is $12$, except for six vertices whose degree is $8$, and six vertices whose degree is $10$.
Hence, the number of edges is $(12(n-12)+10\cdot6+8\cdot6)/2=6n-18$.

\paragraph{Remark.}
As noted in the Introduction, Theorem~\ref{thm:4crossings} actually holds in a more general setting
where edges may intersect several times (including the case of two common endpoints),
as long as there are no \emph{balanced lenses}.
A \emph{lens} is formed by two (closed) edge-segments in a topological (multi)graph that intersect at their endpoints and at no other points:
these edge-segments define a closed curve that divides the plane into two regions (bounded and unbounded),
each of which is called a \emph{lens}.
If every edge of the topological (multi)graph crosses one of the edge-segments 
the same number of times it crosses the other edge-segment, then the lenses defined by these edge-segments are \emph{balanced}.
The proof of Theorem~\ref{thm:4crossings} can be modified to show
that a topological (multi)graph with $n \geq 3$ vertices, at most four crossings per edge
and no balanced lenses has at most $6n-12$ vertices.\footnote{
Note that the construction above with the ``missing diagonals'' at the top and bottom hexagons
shows that this bound is tight for infinitely many values of $n$.}
% However, since this stronger statement is not needed to improve the Crossing Lemma,
% and because proving it would complicate an already long and technical proof, 
% we chose not to prove this generalization of Theorem~\ref{thm:4crossings} here.

%%%%%%%%%%%%%%%%%%%%%%%%%%%%%%%%%%%%%%%%%%%%%%%%%%%%%%%%%%%%%%%%%%%%%%%%%%%%%
%\section{Applications of Theorem~\ref{thm:4crossings}}
\section{Improvements for the Crossing Lemma and Albertson Conjecture}
\label{sec:applications}
%%%%%%%%%%%%%%%%%%%%%%%%%%%%%%%%%%%%%%%%%%%%%%%%%%%%%%%%%%%%%%%%%%%%%%%%%%%%%

%\subsection{A better Crossing Lemma}
%\label{sec:crossing-lemma}

Let $G$ be a graph with $n>2$ vertices and $m$ edges.
Recall that in a drawing of $G$ with $\X(G)$ crossings $G$ is drawn as a simple topological graph.
Therefore, we may consider only such drawings.

The following linear bounds on the crossing number $\X(G)$ appear in~\cite{PR+06} and~\cite{PT97}.
\begin{eqnarray}
\X(G) & \geq & m - 3(n-2) 	 					\label{eq:m-3(n-2)} \\
\X(G) & \geq & \frac{7}{3}m - \frac{25}{3}(n-2)	\label{eq:7/3m-25/3(n-2)} \\
\X(G) & \geq & 4m - \frac{103}{6}(n-2) 			\label{eq:4m-103/6(n-2)} \\
\X(G) & \geq & 5m - 25(n-2) 					\label{eq:5m-25(n-2)}
\end{eqnarray}

Using Theorem~\ref{thm:4crossings} we can obtain a similar bound, as stated in Theorem~\ref{thm:linear-new}:
\begin{equation}\label{eq:5m-139/6(n-2)}
\X(G)  \geq  5m - \frac{139}{6}(n-2)
\end{equation}

\begin{proofof}{Theorem~\ref{thm:linear-new}}
%If $n=3$ or $n=4$ the statement trivially holds since $\X(G) \geq 0$.
If $m \leq 6(n-2)$, then the statement holds by~(\ref{eq:4m-103/6(n-2)}).
Suppose now that $m > 6(n-2)$ and consider a drawing of $G$ with $\X(G)$ crossings.
Remove an edge of $G$ with the most crossings,
and continue doing so as long as the number of remaining edges is greater than $6(n-2)$.
It follows from Theorem~\ref{thm:4crossings} that each of the $m-6(n-2)$ removed edges
was crossed by at least $5$ other edges at the moment of its removal.
By~(\ref{eq:4m-103/6(n-2)}), the number of crossings in the remaining graph is
at least $4(6(n-2))-\frac{103}{6}(n-2)$.
Therefore, $\X(G) \geq 5(m-6(n-2)) + 4(6(n-2))-\frac{103}{6}(n-2) = 5m-\frac{139}{6}(n-2)$.
\end{proofof}

Using the new linear bound it is now possible to obtain a better Crossing Lemma,
by plugging it into its probabilistic proof, as in~\cite{Mon05,PR+06,PT97}.

\begin{proofof}{Theorem~\ref{thm:crossing-lemma}}
Let $G$ be a graph with $n$ vertices and $m \geq 6.95n$ edges
and consider a drawing of $G$ with $\X(G)$ crossings.
Construct a random subgraph of $G$ by selecting every vertex independently with probability $p=6.95n/m\leq 1$.
Let $G'$ be the subgraph of $G$ that is induced by the selected vertices.
Denote by $n'$ and $m'$ the number of vertices and edges in $G'$, respectively.
Clearly, $\E[n']=pn$ and $\E[m']=p^2m$.
Denote by $x'$ the number of crossings in the drawing of $G'$ inherited from the drawing of $G$.
Then $\E[\X(G')] \leq \E[x'] = p^4\X(G)$.
It follows from Theorem~\ref{thm:linear-new} that
$\X(G') \geq 5m'-\frac{139}{6}n'$ (note that this it true for any $n' \geq 0$), 
and this holds also for the expected values:
$\E[\X(G')] \geq 5\E[m']-\frac{139}{6}\E[n']$.
Plugging in the expected values we get that
$\X(G) \geq \left(\frac{5}{6.95^2}-\frac{139}{6\cdot 6.95^3}\right)\frac{m^3}{n^2} = 
\frac{2000}{57963} \frac{m^3}{n^2} \geq \frac{1}{29}\frac{m^3}{n^2}$.

Consider now the case that $m < 6.95n$.
Comparing the bounds~(\ref{eq:m-3(n-2)})--(\ref{eq:5m-139/6(n-2)})
one can easily see that (\ref{eq:m-3(n-2)}) is best when $3(n-2) \leq m < 4(n-2)$,
(\ref{eq:7/3m-25/3(n-2)}) is best when $4(n-2) \leq m < 5.3(n-2)$,
(\ref{eq:4m-103/6(n-2)}) is best when $5.3(n-2) \leq m < 6(n-2)$,
and (\ref{eq:5m-139/6(n-2)}) is best when $6(n-2) \leq m$.
If we consider the possible values of $m < 6.95n$ according to these intervals
and use the best bound for each interval, then we get that $\X(G) \geq \frac{1}{29} \frac{m^3}{n^2}-\frac{35}{29}n$. 
\end{proofof}

The new bound for the Crossing Lemma immediately implies better bounds in all of its applications.
We recall three such improvements from~\cite{PR+06} and~\cite{PT97}.
Since the computations are almost verbatim to the proofs in~\cite{PR+06}, we omit them.

\begin{cor}\label{cor:multigraph}
Let $G$ be an $n$-vertex multigraph with $m$ edges and edge multiplicity $t$.
Then $\X(G) \geq \frac{1}{29}\frac{m^3}{tn^2}-\frac{35}{29}nt^2$.
\end{cor}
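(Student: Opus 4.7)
The plan is to replay the probabilistic amplification proving Theorem~\ref{thm:crossing-lemma}, after first extending the linear lower bound of Theorem~\ref{thm:linear-new} from simple graphs to multigraphs of bounded multiplicity. Since $G$ has edge multiplicity at most $t$, its edge set decomposes into at most $t$ simple spanning subgraphs $G_1,\dots,G_t$: for each parallel class, label its (at most $t$) edges with distinct labels in $\{1,\dots,t\}$, and take $G_i$ to be the subgraph of edges labelled $i$. Fixing an optimal drawing of $G$, each crossing of $G_i$ in the inherited drawing is a crossing of $G$, and the crossing sets of distinct $G_i$'s use disjoint pairs of edges, so applying Theorem~\ref{thm:linear-new} to each simple $G_i$ and summing gives
\[
\X(G)\;\geq\;\sum_{i=1}^{t}\X(G_i)\;\geq\;5m-\tfrac{139\,t}{6}(n-2).
\]

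Next I would run the same probabilistic argument used in the proof of Theorem~\ref{thm:crossing-lemma}. Fix an optimal drawing of $G$ and select each vertex independently with probability $p$; the induced multigraph $G'$ still has multiplicity at most $t$ and satisfies $\E[|V(G')|]=pn$, $\E[|E(G')|]=p^2m$, and $\E[\X(G')]\leq p^4\,\X(G)$. Applying the multigraph linear bound above to $G'$ and taking expectations yields
\[
p^4\,\X(G)\;\geq\;5p^2m-\tfrac{139\,t}{6}(pn-2),
\]
which rearranges to a lower bound on $\X(G)$. Choosing $p$ of the order $tn/m$ to balance the two main terms (subject to $p\leq 1$, i.e.\ $m$ sufficiently larger than $tn$) reproduces the main term of the claimed bound; when $m$ is too small for this choice of $p$ to be admissible, the additive $-\tfrac{35}{29}nt^2$ is engineered to absorb the discrepancy, either rendering the inequality trivial or letting it follow from the weaker multigraph analogues of~(\ref{eq:m-3(n-2)})--(\ref{eq:4m-103/6(n-2)}) obtained by the same partition trick.

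The main obstacle is purely bookkeeping: one must track how the factor $t$ enters the additive error of the linear bound, and then verify that optimizing over $p$ exactly recovers the constants $\tfrac{1}{29}$ and $\tfrac{35}{29}$ stated in the corollary, together with checking that the complementary small-$m$ regime is covered by the partition-derived multigraph versions of the lower-order linear bounds. No new geometric content is needed beyond what was developed in Sections~\ref{sec:4crossings}--\ref{sec:crossing-lemma}, and the computation is, as the paper notes, almost verbatim to the one in~\cite{PR+06}.
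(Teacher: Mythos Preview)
Your plan has a genuine gap, and it is not ``purely bookkeeping.'' The partition of $G$ into $t$ simple spanning subgraphs does yield the linear bound
\[
\X(G)\;\ge\;5m-\tfrac{139t}{6}(n-2),
\]
but feeding this into the vertex-sampling argument does \emph{not} reproduce the main term of the corollary. From $p^{4}\X(G)\ge 5p^{2}m-\tfrac{139t}{6}pn$ one gets $\X(G)\ge \tfrac{5m}{p^{2}}-\tfrac{139tn}{6p^{3}}$; the optimum is at $p=\tfrac{6.95\,tn}{m}$, and substituting gives
\[
\X(G)\;\ge\;\frac{2000}{57963}\cdot\frac{m^{3}}{t^{2}n^{2}}\;\ge\;\frac{1}{29}\,\frac{m^{3}}{t^{2}n^{2}},
\]
with $t^{2}$, not $t$, in the denominator. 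More generally, any linear inequality of the shape $\X\ge am-btn$ combined with vertex sampling yields at best $\tfrac{4a^{3}}{27b^{2}}\cdot\tfrac{m^{3}}{t^{2}n^{2}}$, so no choice of $p$ can recover the missing factor of $t$. Your assertion that optimising over $p$ ``reproduces the main term of the claimed bound'' is therefore off by exactly this factor, and the same defect propagates through your treatment of the small-$m$ regime via the partition-derived analogues of~(\ref{eq:m-3(n-2)})--(\ref{eq:4m-103/6(n-2)}).

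The exponent in the corollary is the tight one: if $G$ consists of $t$ parallel copies of a simple graph $H$, then $m=tm_{H}$ and $\X(G)=t^{2}\X(H)$, so the stated bound reduces exactly to Theorem~\ref{thm:crossing-lemma} for $H$, whereas your bound degrades by a factor of $t$. Since the paper omits the proof and simply points to~\cite{PR+06}, there is nothing explicit here to compare against, but the argument needed to obtain the correct dependence on $t$ is not the bare ``partition and sum'' you describe; summing $t$ copies of the simple-graph inequality discards all crossings between different colour classes and cannot by itself reach $m^{3}/(tn^{2})$.
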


\begin{cor}\label{thm:k-crossings}
Let $G$ be an $n$-vertex simple topological graph.
If every edge of $G$ is crossed by at most $k$ other edges, for some $k \geq 2$, 
then $G$ has at most $3.81\sqrt{k}n$ edges.
\end{cor}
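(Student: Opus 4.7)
The plan is to combine the bound on total crossings implied by the hypothesis with the improved Crossing Lemma (Theorem~\ref{thm:crossing-lemma}). Since every edge of $G$ participates in at most $k$ crossings and each crossing involves two edges, the total number of crossings in $G$ satisfies $\X(G) \leq km/2$. This is the only structural input we need from the hypothesis.

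First I would handle the dense case $m \geq 6.95n$. Here Theorem~\ref{thm:crossing-lemma} gives $\X(G) \geq \frac{1}{29}\cdot\frac{m^3}{n^2}$. Chaining this with $\X(G) \leq km/2$ yields
\[
    \frac{m^3}{29n^2} \;\leq\; \frac{km}{2},
    \quad\text{hence}\quad
    m \;\leq\; \sqrt{\tfrac{29}{2}k}\;n \;\leq\; 3.81\sqrt{k}\,n,
\]
using that $\sqrt{29/2}<3.81$. This is the main computation and is completely routine.

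Next I would dispose of the sparse case $m<6.95n$. For $k \geq 4$ the inequality $6.95 \leq 3.81\sqrt{k}$ holds (since $3.81\sqrt{4}=7.62$), so $m < 6.95n \leq 3.81\sqrt{k}\,n$ and we are done. For the two remaining values $k=2$ and $k=3$ the desired bound is not automatic from $m<6.95n$, so I would invoke Theorem~\ref{thm:PT97}: $e^*_2(n)\leq 5(n-2)<3.81\sqrt{2}\,n$ and $e^*_3(n)\leq 6(n-2)<3.81\sqrt{3}\,n$, which cover these cases.

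There is no real obstacle; the entire argument is a one-line application of the Crossing Lemma, with a small case check at the boundary. The only thing worth being careful about is choosing the threshold in the Crossing Lemma consistently with the constant $3.81$ so that the sparse-case inequality $6.95 \leq 3.81\sqrt{k}$ is satisfied for all $k$ not already covered by Theorem~\ref{thm:PT97}.
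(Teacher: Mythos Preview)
Your argument is correct and is exactly the standard approach the paper has in mind (it defers to \cite{PR+06}, where the computation is the same combination of the trivial bound $\X(G)\le km/2$ with the Crossing Lemma). The small case check for $k=2,3$ via Theorem~\ref{thm:PT97} is the right way to close the gap, so nothing further is needed.
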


\begin{cor}\label{cor:incidences}
The number of incidences between $m$ lines and $n$ points in the Euclidean plane
is at most $2.44m^{2/3}n^{2/3}+m+n$.
\end{cor}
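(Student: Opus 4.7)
The plan is to carry out the standard reduction from point-line incidences to the Crossing Lemma (originally due to Sz\'ekely), now exploiting the sharper constant in Theorem~\ref{thm:crossing-lemma}. Let $I$ denote the number of incidences between the $n$ points and the $m$ lines. We may discard any line that passes through no point, so we assume every line is incident to at least one of the points. I then form a graph $G$ whose vertex set is the point set and whose edges join pairs of points that are consecutive on a common line. A line through exactly $k$ of the points contributes $k-1$ edges, so $|E(G)| = I - m$.

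Next I would realize $G$ as a simple topological graph by tracing each edge along the segment of its ambient line between its two endpoints. Edges coming from the same line do not cross, and two distinct lines meet in at most one point, so any two edges cross at most once and the total number of crossings satisfies $\X(G) \leq \binom{m}{2} < m^2/2$.

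The core step is then to feed this into Theorem~\ref{thm:crossing-lemma}. In the dense regime $I - m \geq 6.95\, n$ the Crossing Lemma gives
$$\frac{(I-m)^3}{29\, n^2} \leq \X(G) < \frac{m^2}{2},$$
and rearranging yields $I - m \leq (29/2)^{1/3} m^{2/3} n^{2/3} < 2.44\, m^{2/3} n^{2/3}$, which is the claimed bound up to the additive $+m$. In the complementary sparse regime $I - m < 6.95\, n$, the desired inequality must be recovered from the slack in the additive terms $m + n$, using the trivial estimate $I \leq mn$ (or $I \leq \binom{n}{2} + m$) in the narrow parameter range where $6.95\, n$ is not already absorbed by $n + 2.44\, m^{2/3} n^{2/3}$.

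The only part demanding any thought is the sparse-regime bookkeeping, which is routine and essentially identical to the computation in Pach et al.~\cite{PR+06}; there is no substantive obstacle beyond selecting the threshold (here $6.95$) so that the resulting constant matches the $1/29$ of Theorem~\ref{thm:crossing-lemma} and pushes the leading coefficient below $2.44$.
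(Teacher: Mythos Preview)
Your proposal is correct and follows exactly the approach the paper has in mind: the paper explicitly omits the proof, stating that ``the computations are almost verbatim to the proofs in~\cite{PR+06}'', and what you have written is precisely the Sz\'ekely argument from~\cite{PR+06} with the constant $1/29$ from Theorem~\ref{thm:crossing-lemma} substituted in, yielding $(29/2)^{1/3}\approx 2.438<2.44$. The sparse-regime bookkeeping you flag is indeed handled in~\cite{PR+06} (using either the general form $\X(G)\geq \tfrac{1}{29}\tfrac{m^3}{n^2}-\tfrac{35}{29}n$ or an elementary K\H{o}v\'ari--S\'os--Tur\'an bound for the narrow range where $I-m<6.95n$), and carries over unchanged.
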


The previous best constant in the last upper bound was $2.5$.
It is known~\cite{PT97} that this constant should be greater than $1.27$.\footnote{
The constant $0.42$ is mentioned in~\cite{PT97} due to miscalculation. It was pointed out recently in~\cite{BCV18} that the actual constant is $1.27$.}

%\begin{comment}

\subsection{Albertson conjecture}

Recall that according to Albertson conjecture if $\chi(G)=r$ then $\X(G) \geq \X(K_r)$.
This has been proven to hold for $r \leq 16$~\cite{ACF09,BT10,OZ09}.
The values $13 \leq r \leq 16$ where verified Bar\'at and T\'oth~\cite{BT10}
who followed and refined the approach of Albertson et al.~\cite{ACF09}.
By using the new bound in Theorem~\ref{thm:linear-new} and following the same approach
we can now verify Albertson conjecture for $r \leq 18$.
For completeness we repeat the arguments from~\cite{BT10}.

A graph $G$ is \emph{$r$-critical} if $\chi(G)=r$ and 
the chromatic number of every proper subgraph of $G$ is less than $r$.
Obviously, if $H$ is a subgraph of $G$ then $\X(H) \leq \X(G)$.
Therefore, it is enough to prove Albertson conjecture for $r$-critical graphs.
Recall also that it suffice to consider graphs with no subdivision of $K_r$.
The next result shows that we may consider only graphs with at least $r+5$ vertices.

\begin{lem}[{\cite[Corollary~11]{BT10}}] \label{lem:r+4}
An $r$-critical graph with at most $r+4$ vertices contains a subdivision of $K_r$
(and thus satisfies Albertson conjecture).
\end{lem}

The approach of~\cite{ACF09} and~\cite{BT10} for proving Albetson conjecture
is to plug lower bounds on the minimum number of edges in $r$-critical graphs
into lower bounds on the crossing number and compare the results to an upper bound on $\X(K_r)$.
By using the same method with the new bounds on the crossing number, 
we can verify Albertson conjecture for further values of $r$.

Let $f_r(n)$ be the minimum number of edges in an $n$-vertex $r$-critical graph.
Since $K_r$ is the only $r$-critical graph with $r$ vertices we have $f_r(r)=r(r-1)/2$.
Another trivial bound is $f_r(n) \geq n(r-1)/2$, because the degree of every vertex in 
an $r$-critical graph must be at least $r-1$. 
The study of $f_r(n)$ goes back to Dirac~\cite{Dir57}.
He proved that there is no $r$-critical graph on $r+1$ vertices and 
that if $r \geq 4$ and $n \geq r+2$ then 
\begin{equation}\label{eq:Dirac}
f_r(n) \geq n(r-1)/2 + (r-3)/2.
\end{equation}
This was improved by Kostochka and Stiebitz~\cite{KS99} to
\begin{equation}\label{eq:KS}
f_r(n) \geq n(r-1)/2 + (r-3),
\end{equation}
when $n \neq 2r-1$.
Considering the case $n=2r-1$, Bar\'at and T\'oth~\cite{BT10} concluded

\begin{lem}[{\cite[Corollary~7]{BT10}}] \label{lem:KS}
Let $G$ be an $n$-vertex $r$-critical graph with $m$ edges, such that $r \geq 4$.
If $G$ does not contain a subdivision of $K_r$ then $m \geq n(r-1)/2 + (r-3)$.
\end{lem}

Gallai~\cite{Gal63} found exact values of $f_r(n)$ for $6 \leq r+2 \leq n \leq 2r-1$:
\begin{equation}\label{eq:Gallai}
f_r(n) = \frac{1}{2}(n(r-1)+(n-r)(2r-n)-2).
\end{equation}
He also characterized the graphs obtaining this bound.
His results yield:

\begin{lem}[{\cite[Corollary~5]{BT10}}] \label{lem:Gallai}
Let $G$ be an $n$-vertex $r$-critical graph with $m$ edges, such that $6 \leq r+2 \leq n \leq 2r-1$.
If $G$ does not contain a subdivision of $K_r$ then $m \geq \frac{1}{2}(n(r-1)+(n-r)(2r-n)-1)$.
\end{lem}

Instead of using the linear bound of Theorem~\ref{thm:linear-new} directly,
we will use a more refined bound obtained from it using the probabilistic argument
(as is done in~\cite{BT10}).

\begin{lem} \label{lem:detailed-cr}
Let $\X(n,m,p) = \frac{5m}{p^2} - \frac{139n}{6p^3} + \frac{139}{3p^4} - \frac{6n^2(1-p)^{n-2}}{p^4}$.
For every graph $G$ with $n \geq 9$ vertices and $m$ edges and every $0 < p \leq 1$ we have $\X(G) \geq \X(n,m,p)$.
\end{lem}

\begin{proof}
We will use the linear bound of Theorem~\ref{thm:linear-new}, however it does not hold for $n \leq 2$.
Therefore, for every graph $G$ we define
$$
\X'(G) = \begin{cases} 
\X(G) & \mbox{if } n \geq 3 \\
5 & \mbox{if } n = 2 \\
24 & \mbox{if } n = 1 \\
47 & \mbox{if } n = 0 
\end{cases}
$$
Thus, for every graph $G$ we have 
\begin{equation}\label{eq:cr'-linear}
\X'(G) \geq 5m-\frac{139}{6}(n-2).
\end{equation}

Let $G$ be a graph with $n$ vertices and $m$ edges and let $0 < p \leq 1$.
Consider a drawing of $G$ with $\X(G)$ crossings.
Construct a random subgraph of $G$ by selecting every vertex independently with probability $p$.
Let $G'$ be the subgraph of $G$ that is induced by the selected vertices.
Denote by $n'$ and $m'$ the number of vertices and edges in $G'$, respectively.
Consider the drawing of $G'$ as inherited from the drawing of $G$, and let $x'$ be the number of crossings in this drawing.
Clearly, $\E[n']=pn$, $\E[m']=p^2m$, and $\E[x']=p^4\X(G)$.
From~(\ref{eq:cr'-linear}) and the linearity of expectation we get:
\begin{eqnarray}
\E[x'] & \geq & \E[\X(G')] - 5\cdot\Pr(n'=2) - 24\cdot\Pr(n'=1) - 47\cdot\Pr(n'=0) \nonumber \\
& \geq & 5p^2m - \frac{139}{6}pn + \frac{139}{3} - 5{{n}\choose{2}}p^2(1-p)^{n-2} - 24np(1-p)^{n-1} - 47(1-p)^{n} \nonumber  \\
& \geq & 5p^2m - \frac{139}{6}pn + \frac{139}{3} - 6n^2p^2(1-p)^{n-2}. \nonumber
\end{eqnarray}
Dividing by $p^4$, the lemma follows.
\end{proof}

Before proving Theorem~\ref{thm:Albertson}, let us recall the best known
upper bound on the crossing number of $K_r$~\cite{Guy60}:
\begin{equation}\label{eq:cr(K_r)}
\X(K_r) \leq Z(r) = \frac{1}{4}\left\lfloor\frac{r}{2}\right\rfloor\left\lfloor\frac{r-1}{2}\right\rfloor\left\lfloor\frac{r-2}{2}\right\rfloor\left\lfloor\frac{r-3}{2}\right\rfloor.
\end{equation}

\begin{proofof}{Theorem~\ref{thm:Albertson}}
We follow the proof of Theorem~2 in~\cite{BT10}.
Given $r$ let $G$ be an $r$-critical graph with $n$ vertices and $m$ edges.
We assume that $G$ does not contain a subdivision of $K_r$ for otherwise we are done.
By Lemma~\ref{lem:r+4} we may assume that $n \geq r+5$.
Lemma~\ref{lem:KS} is used to get a lower bound on $m$, namely $m \geq (r-1)n/2+(r-3)$.
This bound is plugged into Lemma~\ref{lem:detailed-cr} and for an appropriate
value of $p$ we get a lower bound on $\X(G)$ that is greater than $Z(r)$ for $n \geq n'$.
Then it remains to verify the conjecture for each $n$ in the range $r+5,\ldots,n'$.
This is done using a lower bound on $m$ that we get from either Lemma~\ref{lem:KS} or Lemma~\ref{lem:Gallai}
and picking $p$ such that $\X(n,m,p) \geq Z(r)$.
We will always have $n \geq 22$ and $p \geq 0.5$, therefore we may assume that
\begin{equation} \label{eq:cr(n,m,p)}
\X(n,m,p) \geq \frac{5m}{p^2} - \frac{139n}{6p^3} + \frac{139}{3p^4} - 0.05
\end{equation}

1. Suppose that $r=17$ and let $G$ be an $n$-vertex $17$-critical graph with $m$ edges.
By~(\ref{eq:cr(K_r)}) we have $\X(K_r) \leq 784$.
It follows from Lemmas~\ref{lem:r+4} and~\ref{lem:KS} that we may assume that $n \geq 22$ and $m \geq 8n+14$.
From~(\ref{eq:cr(n,m,p)}) we have $\X(G) \geq \X(n,8n+14,0.727) \geq 15.38n+298.25$.
Therefore, if $n \geq \frac{784-298.25}{15.38} \geq 31.58$ the conjecture holds.
Since Bar\'at and T\'oth~\cite{BT10} have already verified Albertson conjecture for $r=17$ and $n \leq 31$, we are done.

2. Suppose that $r=18$ and let $G$ be an $n$-vertex $18$-critical graph with $m$ edges.
By~(\ref{eq:cr(K_r)}) we have $\X(K_r) \leq 1008$.
It follows from Lemmas~\ref{lem:r+4} and~\ref{lem:KS} that we may assume that $n \geq 23$ and $m \geq 8.5n+15$.
From~(\ref{eq:cr(n,m,p)}) we have $\X(G) \geq \X(n,8.5n+15,0.69) \geq 18.74n+361.88$.
Therefore, if $n \geq \frac{1008-361.88}{18.74} \geq 34.47$ the conjecture holds.
It remains to verify the conjecture for $n=23,\ldots,34$.
Table~1 (left) shows the lower bound on $m$ for each $n$,
the value of $p$ we choose, and the corresponding lower bound on the crossing number that we get.
Note that since we are interested in values of $n$ such that $r+2 \leq n \leq 2r-1$,
we may use Lemma~\ref{lem:Gallai} instead of the Lemma~\ref{lem:KS}.

\begin{table} \label{tab:18-19}
\begin{center}
  \begin{tabular}{ | c | c | c | c |}
  \hline
  \multicolumn{4}{|c|}{$r=18$, $\X(K_{18})\leq 1008$} \\
  \hline
    $n$ & $m$ & $p$ & $\lceil \X(n,m,p)\rceil$ \\ \hline
    23 & 228 & 0.555 & 1073 \\
    24 & 240 & 0.556 & 1132 \\
    25 & 251 & 0.560 & 1176 \\
    26 & 261 & 0.567 & 1204 \\
    27 & 270 & 0.576 & 1217 \\
    28 & 278 & 0.586 & 1218 \\
    29 & 285 & 0.599 & 1206 \\
    30 & 291 & 0.613 & 1183 \\
    31 & 296 & 0.628 & 1151 \\
    32 & 300 & 0.646 & 1111 \\
    33 & 303 & 0.665 & 1064 \\
    34 & 305 & 0.686 & 1010 \\ 
     &  &  &  \\ 
     &  &  &  \\ 
     &  &  &  \\ 
     &  &  &  \\ \hline
  \end{tabular}
\quad\quad\quad
  \begin{tabular}{ | c | c | c | c |}
  \hline
  \multicolumn{4}{|c|}{$r=19$, $\X(K_{19})\leq 1296$} \\
  \hline
    $n$ & $m$ & $p$ & $\lceil \X(n,m,p)\rceil$ \\ \hline
     &  &  &  \\ 
    24 & 251 & 0.523 & 1321 \\
    25 & 264 & 0.524 & 1397 \\
    26 & 276 & 0.527 & 1455 \\
    27 & 287 & 0.533 & 1495 \\
    28 & 297 & 0.540 & 1518 \\
    29 & 306 & 0.548 & 1527 \\
    30 & 314 & 0.558 & 1520 \\
    31 & 321 & 0.570 & 1501 \\
    32 & 327 & 0.583 & 1471 \\
    33 & 332 & 0.597 & 1430 \\
    34 & 336 & 0.613 & 1380 \\
    35 & 339 & 0.631 & 1322 \\
    36 & 341 & 0.650 & {\bf 1259} \\
    37 & 349 & 0.656 & {\bf 1269} \\
    38 & 358 & 0.659 & {\bf 1292} \\ \hline
  \end{tabular}
  \caption{Lower bounds on the number of edges and crossing numbers for specific values of $n$ for $r=18$ (left) and $r=19$ (right).}
\end{center}
\end{table}

3. Suppose that $r=19$ and let $G$ be an $n$-vertex $19$-critical graph with $m$ edges.
By~(\ref{eq:cr(K_r)}) we have $\X(K_r) \leq 1296$.
It follows from Lemmas~\ref{lem:r+4} and~\ref{lem:KS} that we may assume that $n \geq 23$ and $m \geq 9n+16$.
From~(\ref{eq:cr(n,m,p)}) we have $\X(G) \geq \X(n,9n+16,0.66) \geq 22.72n+427.78$.
Therefore, if $n \geq \frac{1296-427.78}{22.72} \geq 38.21$ then the conjecture holds.
It remains to verify the conjecture for $n=24,\ldots,38$.
Table~1 (right) shows the lower bound on $m$ for each $n$,
the value of $p$ we choose, and the corresponding lower bound 
on the crossing number that we get.\footnote{The code of our calculations appears in Appendix~\ref{sec:code}.}

Therefore, the conjecture holds for $r=19$ and every $n \notin \{36,37,38\}$.
As is done in~\cite{BT10}, we can handle the case $n=36$ by using a result of Gallai~\cite{Gal63},
who proved that an $r$-critical graph with $2r-2$ vertices is the \emph{join}\footnote{A \emph{join}
of two graphs $G_1=(V_1,E_1)$ and $G_2=(V_2,E_2)$ consists of the two graphs and the edges
$\{(v_1,v_2) \mid v_1 \in V_1, v_2 \in V_2 \}$.}
of two smaller critical graphs.
Therefore, if $n=36$ then $G$ is the join of an $r_1$-critical graph $G_1=(V_1,E_1)$ 
and an $r_2$-critical graph $G_2=(V_2,E_2)$, such that $r_1+r_2=19$.
Let $n_i=|V_i|$ and $m_i=|E_i|$, for $i=1,2$.
Then $n_1+n_2=36$ and $m = m_1+m_2+n_1 n_2$.

We assume without loss of generality that $r_1 \leq r_2$,
and therefore have to consider the cases $r_1=1,\ldots,9$.
Suppose that $r_1=1$, which implies that $G_1=K_1$. 
If $G_2$ contains a subdivision of $K_{18}$ then $G$ contains a subdivision of $K_{19}$ and we are done.
Otherwise, by Lemma~\ref{lem:KS} we get $m_2 \geq 313$.
Therefore, $m = n_1n_2+ m_2 \geq 348$ when $r_1=1$.

Since $G_1$ is $r_1$-critical and $G_2$ is $r_2$-critical we have 
$m \geq f_{r_1}(n_1) + f_{r_2}(n_2) + n_1n_2$.
Note that $n_1=36-n_2 \leq 36-r_2 = r_1+17$ and if $r_1=2$ then $G_1=K_2$. 
A computer calculation using the trivial bound for $f_r(n)$ along with (\ref{eq:KS}),
reveals that $m \geq 348$ for every $r_1=2,\ldots,9$ and every $n_1 = r_1,\ldots,r_1+17$
(ignoring cases where $n_1=r_1+1$ or $n_2=r_2+1$ since there are no such critical graphs).
Therefore, we conclude that $G$ has at least $348$ edges.
Picking $p=0.635$ we get that $\X(G) \geq \X(36,348,0.635) = 1343 \geq \X(K_{19})$. 
% Suppose that $r_1=2$, which implies that $G_2=K_2$.
% If $G_2$ contains a subdivision of $K_{17}$ then $G$ contains a subdivision of $K_{19}$ and we are done.
% Otherwise, by Lemma~\ref{lem:KS} we get $m_2 \geq 286$.
% Therefore, $m = n_1n_2+ m_2 \geq 354$.
%
% Suppose that $r_1=3$, which implies that $G_2=K_3$ or an odd cycle.
% By the trivial bound $m_1 \geq n_1$ and by Lemma~\ref{lem:KS} we have $m_2 \geq (15n_2+26)/2$.
% Thus, $m \geq \lceil n_1 + 15(n_2+26)/2 + n_1n_2 \rceil = \lceil 29.5n_1-n_1^2+283 \rceil$.
% The last expression attains its minimum value $363$ for $n_1=3,\ldots,20$ when $n_1=3$
% (note that since $r_2 = 16$ we have $n_2 \geq 16$ and so $n_1 \leq 20$).
\end{proofof}

Recall that Bar\'at and T\'oth~\cite{BT10} showed that if Albertson conjecture is false, 
then the minimal counter-example is an $r$-critical graph with at least $r+5$ vertices (Lemma~\ref{lem:r+4}).
They also gave an upper bound of $3.57r$ on the number of vertices in such a minimal counter-example
(improving a $4r$ bound due to Albertson et al.~\cite{ACF09}).
Using Theorem~\ref{thm:linear-new} we can improve upon this bound as well.

\begin{lem}
\label{lem:AC-smallest-counterexample}
If $G$ is an $r$-critical graph with $n \geq 3.03r$ vertices, then $\X(G) \geq \X(K_r)$.
\end{lem}

\begin{proof}
The proof is similar to the proof of Lemma~3 in~\cite{BT10}.
We repeat it here for completeness, and because there is a small typo
in the calculation in~\cite{BT10}.
%(a multiplicative factor $\alpha^3$ is missing in the LHS of the last inequality).

Let $G$ be an $r$-critical graph with $n$ vertices drawn in the plane with $\X(G)$ crossings.
We may assume that $r \geq 19$, since for $r \leq 18$ the conjecture holds.
If $n \geq 3.57r$ then it follows from~\cite{BT10} that $\X(G) \geq \X(K_r)$.
Therefore, we assume that $n = \alpha r$ for some $3.03 \leq \alpha < 3.57$.
Note that $n \geq 3r \geq 57$.
Let $5 \leq k \leq n$ be an integer and let $G_1,G_2,\ldots,G_t$, $t = {{n}\choose{k}}$,
be all the (inherited drawings of) subgraphs induced by exactly $k$ vertices in $G$.
%(notice that $n \geq 3r \geq 57$ so there are indeed subgraphs of $G$ with $40$ vertices).
Denote by $m_i$ the number of edges in $G_i$, and note that by Theorem~\ref{thm:linear-new}
we have $\X(G_i) \geq 5m_i-\frac{139}{6}(k-2)$.
Observe also that every crossing in $G$ appears in ${{n-4}\choose{k-4}}$ subgraphs
and every edge in $G$ appears in ${{n-2}\choose{k-2}}$ subgraphs.
Finally, recall that $m \geq n(r-1)/2$ since $G$ is $r$-critical.
Thus we have,
\begin{eqnarray}
\X(G) & \geq & \frac{1}{{{n-4}\choose{k-4}}} \sum_{i=1}^{t} \X(G_i) \geq \frac{1}{{{n-4}\choose{k-4}}} \sum_{i=1}^{t} \left( 5m_i-\frac{139(k-2)}{6}\right) \nonumber \\
& = & 5m\frac{{{n-2}\choose{k-2}}}{{{n-4}\choose{k-4}}} - \frac{139(k-2){{n}\choose{k}}}{6{{n-4}\choose{k-4}}} \nonumber \\
% & = & 5m\frac{(n-2)(n-3)}{(k-2)(k-3)} - \frac{139n(n-1)(n-2)(n-3)}{6k(k-1)(k-3)} \nonumber \\
& \geq & \frac{5(r-1)n}{2}\frac{(n-2)(n-3)}{(k-2)(k-3)} - \frac{139n(n-1)(n-2)(n-3)}{6k(k-1)(k-3)} \nonumber \\
%& = & \frac{5(r-1)\alpha^3 r(r-\frac{2}{\alpha})(r-\frac{3}{\alpha})}{2(k-2)(k-3)} - 
%	\frac{139 \alpha^4 r(r-\frac{1}{\alpha})(r-\frac{2}{\alpha})(r-\frac{3}{\alpha})}{6k(k-1)(k-3)} \\ \nonumber
& = & \frac{n(n-2)(n-3)}{2(k-3)} \left( \frac{5(r-1)}{k-2} - \frac{139(n-1)}{3k(k-1)} \right) \nonumber \\
& = & \frac{\alpha^3 r(r-\frac{2}{\alpha})(r-\frac{3}{\alpha})}{2(k-3)} \left( \frac{5(r-1)}{k-2} - \frac{139(\alpha r-1)}{3k(k-1)}\right)\nonumber \\
& \geq & \frac{\alpha^3 r(r-2)((r-3)+2)}{2(k-3)} \left( \frac{5(r-1)}{k-2} - \frac{139(r-1)(\alpha+\frac{\alpha-1}{r-1})}{3k(k-1)} \right)\nonumber\\
& = & \frac{\alpha^3 r(r-1)(r-2)(r-3)}{2(k-3)} \left( \frac{5}{k-2} - \frac{139\alpha}{3k(k-1)}\right) + h(\alpha,r,k),\nonumber
\end{eqnarray}
where
\begin{eqnarray}
h(\alpha,r,k) & = & \frac{\alpha^3 r(r-1)(r-2)}{2(k-3)}\left(\frac{10}{k-2}- \frac{139}{3k(k-1)}\left(2\alpha + 2\frac{\alpha-1}{r-1} + \frac{r-3}{r-1}(\alpha-1)\right)\right) \nonumber \\
& \geq & \frac{\alpha^3 r(r-1)(r-2)}{2(k-3)}\left(\frac{10}{k-2}- \frac{139}{3k(k-1)}\left(2\alpha + \frac{\alpha-1}{9} + (\alpha-1)\right)\right). \nonumber
\end{eqnarray}

Suppose now that $3.17 \leq \alpha \leq 3.57$.
Then for $k=47<n$ we have $h(\alpha,r,47) \geq 0$ and therefore
\begin{eqnarray}
\X(G) & \geq & \frac{\alpha^3}{2\cdot 44} \left( \frac{5}{45} - \frac{139\alpha}{3\cdot 47 \cdot 46}\right)r(r-1)(r-2)(r-3) \nonumber\\
      & \geq & \frac{1}{64} r(r-1)(r-2)(r-3) \geq \X(K_r). \nonumber
\end{eqnarray}  

Suppose now that $3.05 \leq \alpha \leq 3.17$.
Then for $k=41<n$ we have $h(\alpha,r,41) \geq 0$ and therefore
\begin{eqnarray}
\X(G) & \geq & \frac{\alpha^3}{2\cdot 38} \left( \frac{5}{39} - \frac{139\alpha}{3\cdot 41 \cdot 40}\right)r(r-1)(r-2)(r-3) \nonumber\\
      & \geq & \frac{1}{64} r(r-1)(r-2)(r-3) \geq \X(K_r). \nonumber
\end{eqnarray}  

Finally, suppose that $3.03 \leq \alpha \leq 3.05$.
Then for $k=40<n$ we have $h(\alpha,r,40) \geq 0$ and therefore
\begin{eqnarray}
\X(G) & \geq & \frac{\alpha^3}{2\cdot 37} \left( \frac{5}{38} - \frac{139\alpha}{3\cdot 40 \cdot 39}\right)r(r-1)(r-2)(r-3) \nonumber\\
      & \geq & \frac{1}{64} r(r-1)(r-2)(r-3) \geq \X(K_r). \nonumber
\end{eqnarray}  
\end{proof}

%\end{comment}
\paragraph{Acknowledgements.} I thank an anonymous referee for carefully reading the paper, suggesting ways to improve its presentation and bringing the remark in~\cite{BCV18} to my attention.

\bibliographystyle{abbrv}

\begin{thebibliography}{10}

\footnotesize

\bibitem{Ac09}
E.~Ackerman, 
On the maximum number of edges in topological graphs with no four pairwise crossing edges, 
{\em Disc.\ Compu.\ Geometry}, 41~(2009), 365–-375.

%\bibitem{CGTA}
%E.~Ackerman, 
%On topological graphs with at most four crossings per edge, 
%{\em Computational Geometry: Theory and Applications}, to appear.

\bibitem{AT07}
E.~Ackerman and G.~Tardos, 
On the maximum number of edges in quasi-planar graphs, 
{\em J.\ Combinatorial Theory, Ser.\ A.}, 114:3~(2007), 563-571.

%\bibitem{Adam}
%A.~Sheffer,
%Incidences: lower bounds (part 2),
%{\em Some Plane Truths},
%Retrieved on July 20, 2014 from {\tt %	http://adamsheffer.wordpress.com/2014/07/01/incidences-lower-bounds-part-2/}.

\bibitem{AZ04}
M.~Aigner and G.~Ziegler, 
{\em Proofs from the Book}, Springer-Verlag, Heidelberg, 2004.

\bibitem{AH77}
K.~Appel and W.~Haken, 
Every planar map is four colorable. Part I. Discharging, 
{\em Illinois J.\ Math.}, 21~(1977), 429-–490.

\bibitem{AFKMT12}
K.~Arikushi, R.~Fulek, B.~Keszegh, F.~Moric, and C.D.~T\'oth,
Graphs that admit right angle crossing drawings,
{\em Comput.\ Geom.\ Theory Appl.}, 45:7~(2012), 326--333.

\bibitem{ACNS82}
{M. Ajtai, V. Chv\'atal, M. Newborn, and E. Szemer\'edi,} 
{Crossing-free subgraphs},
{\it Theory and Practice of Combinatorics}, North-Holland Math. Stud. 60,
North-Holland, Amsterdam, 1982, 9--12.

\bibitem{ACF09}
M.O.~Albertson, D.W.~Cranston, and J.~Fox,
Crossings, colorings and cliques,
{\em Elec.\ J.\ Combinatorics} 16~(2010), \#R45.

\bibitem{BCV18}
M.~Balki, J.~Cibulka and P.~Valtr,
Covering lattice points by subspaces and counting point- hyperplane incidences,
{\em Disc.\ Compu.\ Geometry}, to appear.


\bibitem{BT10}
J.~Bar\'at and G.~T\'oth,
Towards the Albertson conjecture,
{\em Elec.\ J.\ Combinatorics} 17:1~(2010), \#R73.

\bibitem{BMP05}
P.~Brass, W.~Moser, J.~Pach,
{\em Research Problems in Discrete Geometry}, Springer, 2005.

\bibitem{Cat79}
P.A.~Catlin, 
Haj\'os' graph-coloring conjecture: variations and counterexamples, 
{\em J.\ Combin.\ Theory Ser.\ B}, 26~(1979), 268--274.

\bibitem{Dir57}
G.A.~Dirac, 
A theorem of R.L.~Brooks and a conjecture of H.~Hadwiger, 
{\em Proc.\ London Math.\ Soc.} 7 (1957), 161--195.

\bibitem{EF81}
P.~Erd\H{o}s and S.~Fajtlowicz, 
On the conjecture of Haj\'os, 
{\em Combinatorica}, 1~(1981), 141--143.

\bibitem{Gal63}
T.~Gallai, 
Kritische Graphen II, 
{\em Publ.\ Math.\ Inst.\ Hungar.\ Acad.\ Sci.} 8~(1963), 373--395.

\bibitem{Guy60}
R.K.~Guy, 
A combinatorial problem, 
{\em Nabla (Bulletin of the Malayan Mathematical Society)}, 7~(1960), 68--72.

\bibitem{KS99}
A.V.~Kostochka and M.~Stiebitz, 
Excess in colour-critical graphs, 
in: Graph Theory and Combinatorial Biology, 
Balatonlelle (Hungary), 1996, 
{\em Bolyai Society, Mathematical Studies} 7, Budapest, 1999, 87--99.

\bibitem{KP11}
S.~Kurz and R.~Pinchasi, 
Regular matchstick graphs,
{\em American Mathematical Monthly}, 118:3~(2011), 264--267. 

\bibitem{L83}
F.T. Leighton, 
{\it Complexity Issues in VLSI: Optimal Layouts for the Shuffle-Exchange Graph and Other Networks}, 
MIT Press, Cambridge, MA, 1983.

\bibitem{Mon05}
B.~Montaron,
An improvement of the crossing number bound,
{\em J.\ Graph Theory}, 50:1~(2005), 43--54.

\bibitem{OZ09}
B.~Oporowskia and D.~Zhao,
Coloring graphs with crossings,
{\em Disc.\ Math.}, 309:6~(2009), 2948--2951.

\bibitem{PR+06}
J.~Pach, R.~Radoi\v{c}i\'{c}, G.~Tardos, G.~T\'oth,
Improving the crossing lemma by finding more crossings in sparse graphs,
{\em Disc.\ Compu.\ Geometry}, 36:4~(2006), 527--552.

\bibitem{PT97}
J.~Pach and G.~T\'oth,
Graphs drawn with few crossings per edge,
{\em Combinatorica}, 17:3~(1997), 427--439.

\bibitem{RT08}
R.~Radoi\v{c}i\'{c} and G.~T\'oth, 
The discharging method in combinatorial geometry and the Pach--Sharir conjecture,
{\em Proc.\ Summer Research Conference on Discrete and Computational Geometry}, (J. E. Goodman, J. Pach, J. Pollack, eds.), 
Contemporary Mathematics, AMS, 453 (2008), 319--342.

\end{thebibliography}

%\begin{comment}

\appendix

\section{{\sf sage} code of the calculations in the proof of Theorem~\ref{thm:Albertson}}
\label{sec:code}

\begin{lstlisting}
sage: Dirac(n,r)=((r-1)*n+r-3)/2
sage: KS(n,r)=((r-1)*n+2*r-6)/2
sage: Gallai(n,r)=((r-1)*n+(n-r)*(2*r-n)-2)/2
sage: BT_Gal(n,r)=Gallai(n,r)+0.5
sage: cr_prime(n,m,p)=5*m/p^2-139*n/(6*p^3)+139/(3*p^4)-0.05
sage: Z(r)=floor(r/2)*floor((r-1)/2)*floor((r-2)/2)*floor((r-3)/2)/4
sage: def proc1(r):
...       sols = solve([cr_prime(n,KS(n,r),p).diff(p)==0, cr_prime(n,KS(n,r),p)==Z(r)],n,p,solution_dict=True)
...       for s in sols:
...           if (s[n].imag()==0 and s[p].imag()==0): # output only real solutions
...               print "p=",s[p].n(),",n=",s[n].n()
sage: proc1(17)
p= 0.727523979840676 ,n= 31.5627659574468
sage: cr_prime(n,KS(n,17),0.727)
15.3896636507376*n + 298.258502516192
sage: proc1(18)
p= 0.690689920492434 ,n= 34.4659498207885
sage: cr_prime(n,KS(n,18),0.69)
18.7463154231188*n + 361.887598221377
sage: def proc2(n,r):
...       if n <= 2*r-2:
...           m = ceil(BT_Gal(n,r))
...       else:
...           m = ceil(KS(n,r))
...       sols = solve(diff(cr_prime(n,m,p),p)==0, p, solution_dict=True)
...       best_p= round(sols[1][p],3)
...       best_cr = ceil(cr_prime(n,m,best_p))
...       str = '\t\t'+repr(n)+' & '+repr(m)+' & '+repr(best_p.n())+' & '+repr(best_cr) + ' \\\\'
...       print str
sage: for n in range(23,35):
...       proc2(n,18)
		23 & 228 & 0.555000000000000 & 1073 \\
		24 & 240 & 0.556000000000000 & 1132 \\
		25 & 251 & 0.560000000000000 & 1176 \\
		26 & 261 & 0.567000000000000 & 1204 \\
		27 & 270 & 0.576000000000000 & 1217 \\
		28 & 278 & 0.586000000000000 & 1218 \\
		29 & 285 & 0.599000000000000 & 1206 \\
		30 & 291 & 0.613000000000000 & 1183 \\
		31 & 296 & 0.628000000000000 & 1151 \\
		32 & 300 & 0.646000000000000 & 1111 \\
		33 & 303 & 0.665000000000000 & 1064 \\
		34 & 305 & 0.686000000000000 & 1010 \\
sage: proc1(19)
p= 0.659831121833534 ,n= 38.2051696284330
sage: cr_prime(n,KS(n,19),0.66)
22.7249538544304*n + 427.789066289688
sage: for n in range(24,39):
...       proc2(n,19)
		24 & 251 & 0.523000000000000 & 1321 \\
		25 & 264 & 0.524000000000000 & 1397 \\
		26 & 276 & 0.527000000000000 & 1455 \\
		27 & 287 & 0.533000000000000 & 1495 \\
		28 & 297 & 0.540000000000000 & 1518 \\
		29 & 306 & 0.548000000000000 & 1527 \\
		30 & 314 & 0.558000000000000 & 1520 \\
		31 & 321 & 0.570000000000000 & 1501 \\
		32 & 327 & 0.583000000000000 & 1471 \\
		33 & 332 & 0.597000000000000 & 1430 \\
		34 & 336 & 0.613000000000000 & 1380 \\
		35 & 339 & 0.631000000000000 & 1322 \\
		36 & 341 & 0.650000000000000 & 1259 \\
		37 & 349 & 0.656000000000000 & 1269 \\
		38 & 358 & 0.659000000000000 & 1292 \\
sage: def f(n,r): # lower bound for the number of edge in n-vertex r-critical graph
...       best=0
...       if n==r: # K_r
...           best=n*(n-1)/2
...       elif n>r+1:
...           best=ceil(n*(r-1)/2) # trivial
...           if (r>=4 and n>=r+2):
...               best=max(best,ceil(Dirac(n,r)))
...               if n!=2*r-1:
...                   best=max(best,ceil(KS(n,r)))
...               if n<=2*r-1:
...                   best=max(best,ceil(Gallai(n,r)))
...       return best
sage: # considering the case r=19, n=36
sage: min_m=348
sage: for r1 in range(2,10):
...       r2 = 19-r1
...       if r1==2:
...           max_n1=2
...       else:
...           max_n1=36-r2
...       for n1 in range(r1,max_n1+1):
...           n2 = 36-n1
...           if (n1!=r1+1 and n2!=r2+1):
...               curr = f(n1,r1)+f(n2,r2)+n1*n2
...               min_m = min(min_m,curr)
...
sage: print min_m
348
\end{lstlisting}

%\end{comment}

\end{document}